\date{}
\newtheorem{theorem}{Theorem}[section]
\newtheorem{lemma}{Lemma}[section]
\newtheorem{corollary}{Corollary}[section]
\newtheorem{remark}{Remark}[section]
\newtheorem{example}{Example}[section]
\begin{document}

\centerline{\sc Analysis of mean field games via Fokker-Planck-Kolmogorov equations:}

\centerline{\sc existence of equilibria}

\vskip 2 ex

\begin{center}
{\sc Stanislav V. Shaposhnikov ${}^{a, b}$\footnote{Corresponding author
\par
e-mails: starticle@mail.ru (S.V.Shaposhnikov), shatiltop@mail.ru (D.V.Shatilovich).}, \,
Dmitry V. Shatilovich${}^{a}$}
\end{center}

\vskip 1. ex

\quad ${}^{a}$ Faculty of Mechanics and Mathematics, Lomonosov Moscow State
University, 119991, GSP-1, Leninskie Gory, Moscow, Russia;

\quad ${}^{b}$  National Research University Higher School of Economics, Moscow, Russia.

\vskip 4 ex

{\bf Abstract} We study mean field games with unbounded coefficients. The existence of a solution is proved.
We propose a new approach based on Fokker--Planck--Kolmogorov equations, the Ambrosio--Figalli--Trevisan superposition principle,
the method of doubling variables and a~priory estimates with Lyapunov functions.

\vskip 2 ex

{\bf Keywords:} nonlinear Fokker--Planck--Kolmogorov equations, stochastic mean field games.

\vskip 2 ex

{\bf AMS Subject Classification:} 35K55, 35Q89, 49N80.

\vskip 2 ex
	
	\section{\sc Introduction}
	
	\vspace*{0.2cm}

We consider the following nonlinear problem {\bf (P)}: for a given number $T>0$, a probability measure $\nu$,
a control set $U\subset\mathbb{R}^{d_1}$ and functions $a^{ij}$, $b^i$, $q^{im}$, $f$, $g$ construct
a continuous curve $t\mapsto\mu_t$, $t\in [0, T]$, in the space of probability measures on $\mathbb{R}^d$ with the weak topology
and a Borel mapping $u\colon \mathbb{R}^d\times[0, T]\to U$ such that

1) the measure $\mu=\mu_t\,dt$ is a solution to the Cauchy problem for the nonlinear Fokker--Planck--Kolmogorov equation
$$
\partial_t\mu_t=\partial_{x_i}\partial_{x_j}\bigl(a^{ij}(x, t, \mu)\mu_t\bigr)-
\partial_{x_i}\Bigl(\bigl(b^i(x, t, \mu)+q^{im}(x, t, \mu)u_m(x, t)\bigr)\mu_t\Bigr), \quad \mu_0=\nu,
$$

2) the inequality
\begin{multline*}
	\int_0^T\int_{\mathbb{R}^d}f(u(x, t), x, t, \mu)\mu_t(dx)\,dt+\int_{\mathbb{R}^d}g(x, \mu)\mu_T(dx)\le
	\\
	\le\int_0^T\int_{\mathbb{R}^d}f(v(x, t), x, t, \mu)\sigma_t(dx)\,dt+\int_{\mathbb{R}^d}g(x, \mu)\sigma_T(dx)
\end{multline*}
is fulfilled for every Borel mapping $v\colon \mathbb{R}^d\times[0, T]\to U$ and every measure $\sigma=\sigma_t\,dt$ such that
the mapping $t\mapsto\sigma_t$ is a continuous curve in the space of probability measures on $\mathbb{R}^d$ with the weak topology
and $\sigma$ is a solution to the Cauchy problem for the linear Fokker--Planck--Kolmogorov equation
$$
\partial_t\sigma_t=\partial_{x_i}\partial_{x_j}\bigl(a^{ij}(x, t, \mu)\sigma_t\bigr)-
\partial_{x_i}\Bigl(\bigl(b^i(x, t, \mu)+q^{im}(x, t, \mu)v_m(x, t)\bigr)\sigma_t\Bigr), \quad \sigma_0=\nu.
$$
Here the usual convention about summation over repeated indices is employed.

The main result to be presented in Theorem~\ref{th1} is the existence of a solution to the problem~{\bf (P)}.
This problem arises in stochastic mean field games which have the following structure.
Let us fix a measure $\sigma=\sigma_t\,dt$ such that the mapping $t\mapsto\sigma_t$ with values in the space of
probability measures on $\mathbb{R}^d$ is continuous with respect to the weak topology.
Solve the optimal control problem
$$
\inf_{(X_t, U_t)}\mathbb{E}\Bigl(\int_0^Tf(U_t, X_t, t, \sigma)\,dt+g(X_T, \sigma)\Bigr),
$$
where
$$
dX_t=\sqrt{2A(X_t, t, \sigma)}dW_t+\Bigl(b(X_t, t, \sigma)+Q(X_t, t, \sigma)U_t\Bigr)\,dt,
$$
$$
{\rm Law}\bigl(X_0\bigr)=\nu, \, Q(x, t, \sigma)=\bigl(q^{im}(x, t, \sigma)\bigr), \,
A(x, t, \sigma)=\bigl(a^{ij}(x, t, \sigma)\bigr), \,
b(x, t, \sigma)=\bigl(b^i(x, t, \sigma)\bigr).
$$
Denote by $\Phi(\sigma)$ the set of all measures $\eta=\eta_t\,dt$
such that there exists a solution $(X_t, U_t)$ of this optimal control
problem and $\eta_t={\rm Law}\bigl(X_t\bigr)$ for all $t\in[0, T]$.
A mean field game solution $\mu$ is a fixed point
of the mapping $\sigma\mapsto \Phi(\sigma)$, that is $\mu\in \Phi(\mu)$.
Mean field games describe the Nash equilibria in games with an infinite number of agents.
In this interpretation the solution~$\mu$ represents the distribution of state processes an infinity of agents and the corresponding $X_t$ describes
the state process dynamics of a single representative agent.
Mean field games are used as approximations to the Nash equilibria in stochastic games with many players.
The solution $(u, \mu)$ of the problem~{\bf (P)} and the Ambrosio--Figally--Trevisan superposition principle
together with the convexity assumptions allow us to solve the mean field game.
This will be discussed in Corollary~{\ref{cor1}.
	Note that if $\mu_t={\rm Law}\bigl(X_t\bigr)$ and $U_t=u(X_t, t)$ for some Borel function~$u$, then
	$$
	\mathbb{E}\Bigl(\int_0^Tf(U_t, X_t, t, \sigma)\,dt+g(X_T, \sigma)\Bigr)=
	\int_0^T\int_{\mathbb{R}^d}f(u(x, t), x, t, \sigma)\mu_t(dx)\,dt+\int_{\mathbb{R}^d}g(x, \sigma)\mu_T(dx)
	$$
	and by the It$\hat{o}$ formula the measure $\mu=\mu_t\,dt$ is a solution to the
	Fokker--Planck--Kolmogorov equation. According to the superposition principle, under broad assumptions
	every probability solution to the Fokker--Planck--Kolmogorov equation
	can be represented by a weak solution to the corresponding stochastic differential equation.
	Thus the problem {\bf (P)} is a special case (when $U_t=u(X_t, t)$) of the mean field game.
	Moreover, under the convexity assumptions the mean field game can be reduced to the problem~{\bf (P)}.
	
	The study of mean field games began with the pioneering works of Huang, Malham$\grave{e}$ and Caines \cite{HMC06}
	and Lasry and Lions \cite{Lasry-Lions}. A survey of known results is given in the books
	\cite{CardParetta20}, \cite{BensoussanFrehseYam}, \cite{CarmonaDel} and \cite{Gomes}.
	Note also that mean field games were largely developed in Pierre-Louis Lions's
	series of lectures at the Coll$\grave{e}$ge de France. The description of these lectures is presented in~\cite{CardParetta20}. The study of mean field games has been motivated, at least in part, by the increasing number of applications. For example, the application of mean field games to optimal liquidation problems can be found in \cite{Cardaliaguet18}. The paper \cite{Carmona20} provides a detailed overview of other applications.
	
	There are several ways to construct solutions to mean field games. The first approach is to solve
	the forward-backward problem which has the form of a system of Hamilton--Jacobi--Bellman and
	Fokker--Planck--Kolmogorov equations (see, for instance \cite{Barbu25}, \cite{Paretta17}).
	In this case coefficients have a special structure, for instance, the diffusion matrix $A$ is non-degenerate and even constant
	and $b=-H_p(x, t, p)$, in particular $A$ and $b$ do not depend on $\mu$. Moreover, the function~$f$ has the form
	$f(u, x, t, \mu)=l(u, x, t)+F(x, t, \mu)$. However,
	this approach allows to take into account the nonlinearity of local type in $\mu$
	and non-smooth coefficients.
	The second approach is based on a probabilistic analysis of mean field games (see \cite{CarmonaDelarue},
	\cite{Carmona-Lacker}, \cite{Lacker}). The probabilistic approach is developed in three directions:
	1)~the~stochastic maximum principle (see \cite{BensoussanFYpaper}, \cite{CarmonaDelarue}),
	2)~the~convergence of the Nash equilibria in a symmetric $N$--player game to the mean field game limit
	(see \cite{Cardconv}, \cite{Fischer converg}, \cite{Lacker converg}),
	3)~relaxed controls (see \cite{Carmona-Lacker}, \cite{Lacker}).
	The stochastic maximum principle and the convergence of Nash equilibria require restrictive smoothness conditions on the coefficients.
	More general results use relaxed controls. The idea is as follows: first, we construct a measure on the control set $U$
	and then, using the convexity of the data and conditional measures, we obtain a control function $(x, t)\mapsto u(x, t)$.
	This idea is well--known in optimal control problems (see \cite{Filippov}).
	
	One of the most general existence results is presented in~\cite{Lacker},
	where there are two types of assumptions: (A) the coefficients are Lipschitzian and
	(C) the coefficients are bounded and continuous, the matrix $A$ is non-degenerate and the control set $U$ is compact.
	Note that the assumptions of the paper~\cite{Lacker} only allow  linear growth of the coefficients.     	
	In addition, if $U=\mathbb{R}^{d_1}$, then the assumption (A.2) from \cite{Lacker} implies that $Q(x, t, \mu)$ does not depend on~$x$.
	We improve the results of \cite{Lacker} and allow nonlinear growth of the coefficients and
	more general dependence on $\mu$ (see examples \ref{ex1}--\ref{ex4}).
	In \cite{Lacker} the relaxed control is a random process with values in the space of probability measures, the theory of martingale measures is applied. This approach is generalised with similar assumptions
	to mean field games with singular controls \cite{Fu-Horst} and to mean field games with absorption \cite{Campi}.
	We propose a different approach based on Fokker--Planck--Kolmogorov equations and not involving stochastic control theory.
	Moreover, a~priori estimates with Lyapunov functions allow us to consider coefficients with arbitrary growth.
	Another improvement is that the coefficients $A$, $b$, $Q$ and the functions $f$, $g$ can depend on
	the entire measure $\mu=\mu_t\,dt$ rather than on the measure~$\mu_t$ at time~$t$.
	The connection between probabilistic solutions of Fokker--Planck--Kolmogorov equations
	and solutions to stochastic differential equations is based on the Ambrosio--Figalli--Trevisan superposition principle,
	which is now known under very general assumptions (see \cite{Superp21}, \cite{Trev}).
    Moreover, we use the method of doubling variables (see \cite{LebRi}, \cite{Chen}).
	References to the latest results concerning the theory of Fokker--Planck--Kolmogorov equations can be found in \cite{bookFPK}.
	
	Note that if the coefficients $A$, $b$, $Q$ and the functions $f$, $g$ do not depend on $\mu$,
	then the problem~{\bf (P)} is a control problem for linear Fokker--Planck--Kolmogorov equations
	(see, for instance \cite{Anita1}, \cite{Anita2}, \cite{AnuBor}). If $f=g=0$ and $Q=0$, then the problem~{\bf (P)}
	is a Cauchy problem for the nonlinear Fokker--Planck--Kolmogorov equation
	(see, for instance  \cite{BarbuRochner}, \cite{BogachShap}, \cite{Frank}, \cite{kolokoltsov-nonl}).
	We stress that the problem~{\bf (P)} is not a control problem for nonlinear
	Fokker--Planck--Kolmogorov equations (see \cite{Carrillo20}). The case $A=0$ corresponds to the deterministic mean field games which are discussed
	in papers \cite{Averboukh first order1}, \cite{Cannarsa}, \cite{Cardaliaguet-Graber}.
	Finally, note that mean field games for more general Markov processes are considered
	in works \cite{Averboukh converg Levy}, \cite{Chowdhury-Espen-Krupski}, \cite{Kolokoltsov}.
	
	This paper consists of four sections. In Section 2 we discuss the main results and examples.
	Auxiliary results are proved in Section 3. Section 4 is devoted to the proofs of the main results.

	\vspace*{0.2cm}
	
	\section{\sc Main results}
	
	\vspace*{0.2cm}
	
	Let $T>0$, $V\in C^2(\mathbb{R}^d)$, $V\ge 0$ and $\lim_{|x|\to+\infty}V(x)=+\infty$.
	For example, one can take the function $V(x)=(1+|x|^2)^{p/2}$, where $p>0$.
    The bounded Borel measure $\mu$ on $\mathbb{R}^d$ is called a probability measure if $\mu\ge 0$ and $\mu(\mathbb{R}^d)=1$.
    We say that the bounded Borel measure $\mu$ on $\mathbb{R}^d\times[0, T]$ is given by a family of Borel measures $(\mu_t)_{t\in[0, T]}$
    if for every Borel set $E$ the mapping $t\mapsto \mu_t(E)$ is Borel measurable and for every bounded Borel function~$\eta$ the equality
    $$
	\int_{\mathbb{R}^d\times[0, T]}\eta(x, t)\mu(dxdt)=\int_0^T\int_{\mathbb{R}^d}\eta(x, t)\mu_t(dx)\,dt
	$$
    holds. We also use the short notation $\mu=\mu_t\,dt$.

    Let $\mathcal{M}(V)$ denote the set of bounded nonnegative Borel measures $\mu$ on
	$\mathbb{R}^d\times [0, T]$ given by a family of probability measures
    $(\mu_t)_{t\in[0, T]}$ such that the mapping $t\mapsto\mu_t$ is continuous with respect to the weak topology
    (see Remark~\ref{r0}) and
	$$
	\sup_{t\in[0, T]}\int_{\mathbb{R}^d}V(x)\,\mu_t(dx)<\infty.
	$$
	We will say that measures $\mu^n=\mu^n_t\,dt$ in the set $\mathcal{M}(V)$
	converge $V$ -- weakly to a measure $\mu=\mu_t\,dt$ in $\mathcal{M}(V)$
    if for all $t\in [0, T]$ one has the equality
	$$
	\lim_{n\to\infty}\int_{\mathbb{R}^d}\zeta(x)\mu_t^n(dx)=\int_{\mathbb{R}^d}\zeta(x)\mu_t(dx)
	$$
	for every continuous function $\zeta$ on $\mathbb{R}^d$ with
	$\lim_{|x|\to+\infty}\zeta(x)/V(x)=0$ (see also Remark~\ref{r2}).

	For $R>0$ and $M\ge 0$ let $\mathcal{M}_{R, M}(V)$ denote the set of all measures $\mu=\mu_t\,dt$
    in $\mathcal{M}(V)$ such that for every $t\in[0, T]$ one has the estimate
	$$
	\int_{\mathbb{R}^d}V(x)\,\mu_t(dx)\le Re^{Mt}.
	$$
    The set $\mathcal{M}_{R, 0}(V)$ is denoted by $\mathcal{M}_R(V)$.

	For every measure $\mu\in \mathcal{M}(V)$
	for every $1\le i, j\le d$ and $1\le m\le d_1$,
	we are given Borel functions
	$$
	(x, t)\mapsto a^{ij}(x, t, \mu), \quad (x, t)\mapsto b^i(x, t, \mu), \quad (x, t)\mapsto q^{im}(x, t, \mu)
	$$
	such that the matrix $A(x, t, \mu)=\bigl(a^{ij}(x, t, \mu)\bigr)_{1\le i, j\le d}$ is symmetric and nonnegative definite.
	
	Let $L_{\mu}$ denote the differential operator
	$$
	L_{\mu}\psi(x, t)={\rm tr}(A(x, t, \mu)D^2\psi(x))+\langle b(x, t, \mu), \nabla\psi(x)\rangle,
	$$
	where $b(x, t, \mu)=(b^i(x, t, \mu))_{1\le i\le d}$.
	
	Let $U$ be a nonempty convex closed set in $\mathbb{R}^{d_1}$.
	For every $u\in U$ let $L_{\mu, u}$ denote the differential operator
	$$
	L_{\mu, u}\psi(x, t)=L_{\mu}\psi(x, t)+\langle Q(x, t, \mu)u, \nabla\psi(x)\rangle,
	$$
	where $Q(x, t, \mu)=\bigl(q^{im}(x, t, \mu)\bigr)_{1\le i\le d, 1\le m\le d_1}$.	
    The transpose of the matrix $Q$ is denoted by $Q^{\top}$.
	
	Let $W$ be a continuous function on $\mathbb{R}^d$ such that
	$$
	0\le W(x)\le V(x), \quad \lim_{|x|\to+\infty}\frac{W(x)}{V(x)}=0.
	$$
	
	We also use the convex increasing and continuous function $h$ on $[0, +\infty)$ such that
	$$
	h(0)=0, \quad \lim_{v\to+\infty}\frac{h(v)}{v}=+\infty.
	$$
	
	Let $h^{*}$ denote the Legendre transform of the function $h$ that is
	$$h^{*}(v)=\sup_{p\ge 0}\bigl(pv-h(p)\bigr).$$
    Below we often use the inequality $pv\le h(p)+h^{*}(v)$. Note also that the function $h^{*}$ plays a significant
    role in the growth conditions on our coefficients $A$, $b$ and $Q$ while the function $h$ limits the
    growth of the function $f$ with respect to the control $u$.

    Let us formulate our main assumptions.
	
	\vspace*{0.2cm}
	
	$\bf(H1)$ {\bf (Local conditions)}
	
	\vspace*{0.2cm}
	
	$\bf(H1.1)$
	For every open ball $B\subset\mathbb{R}^d$, for every $R>0$ and all $i, j, m$ there holds
	$$
	\sup_{x\in B, t\in[0, T], \mu\in\mathcal{M}_R(V)}
	\Bigl(|a^{ij}(x, t, \mu)|+|b^i(x, t, \mu)|+|q^{im}(x, t, \mu)|\Bigr)<\infty.
	$$
	
	$\bf(H1.2)$
	For every measure $\mu\in\mathcal{M}(V)$ and all $t\in[0, T]$ the functions
	$$
	x\mapsto a^{ij}(x, t, \mu), \quad x\mapsto b^{i}(x, t, \mu), \quad x\mapsto q^{im}(x, t, \mu)
	$$
	are continuous on $\mathbb{R}^d$.
	
	$\bf(H1.3)$
	For every open ball $B$, for all $t\in[0, T]$ and for every number $R>0$ the $V$--weak convergence of measures
    $\mu^n\in \mathcal{M}_{R}(V)$ to a measure $\mu\in \mathcal{M}_{R}(V)$ implies the equality
	\begin{multline*}
		\lim_{n\to\infty}\sup_{x\in B}\Bigl(|a^{ij}(x, t, \mu^n)-a^{ij}(x, t, \mu)|+
		\\
		|b^{i}(x, t, \mu^n)-b^{i}(x, t, \mu)|
		+|q^{im}(x, t, \mu^n)-q^{im}(x, t, \mu)|\Bigr)=0.
	\end{multline*}
	
	We also need some global assumptions with the function~$V$.
	
	\vspace*{0.2cm}
	
	$\bf(H2)$ {\bf (Global conditions)}
	
	\vspace*{0.2cm}
	
	$\bf(H2.1)$ There exists a number $C_L>0$ such that the estimate
	$$
	L_{\mu}V(x, t)+h^{*}\bigl(|Q^{\top}(x, t, \mu)\nabla V(x)|\bigr)
	\le C_LV(x)+C_L\int_{\mathbb{R}^d}V(y)\mu_t(dy)+C_L\sup_{t\in[0, T]}\int_{\mathbb{R}^d}W(y)\mu_t(dy)
	$$
	holds for every measure $\mu\in \mathcal{M}(V)$, every $x\in\mathbb{R}^d$ and all $t\in[0, T]$.
	
	$\bf(H2.2)$
	For every measure $\mu\in \mathcal{M}(V)$ there exist a number $C_1(\mu)>0$
    and a nonnegative Borel function $\Theta$ on $\mathbb{R}^d\times[0, T]$ such that
	for all $x, y\in\mathbb{R}^d$ and $t\in[0, T]$ one has
	\begin{multline*}
	{\rm trace}\Biggl(\Bigl(\sqrt{A(x, t, \mu)}-\sqrt{A(y, t, \mu)}\Bigr)^2\Biggr)+\Big\langle b(x, t, \mu)-b(y, t, \mu), x-y\Big\rangle\le
\\
	\le C_1(\mu)\Bigl(1+V(x)+V(y)\Bigr)|x-y|^2
	\end{multline*}
	and
	$$
	\|Q(x, t, \mu)-Q(y, t, \mu)\|\le \Bigl(\Theta(x, t)+\Theta(y, t)\Bigr)|x-y|.
	$$
	
	$\bf(H2.3)$ For every measure $\mu\in \mathcal{M}(V)$ there exists a number $C_2(\mu)>0$ such that
	for all $x\in\mathbb{R}^d$ and $t\in[0, T]$ one has the inequality
	$$
	\|A(x, t, \mu)\|+|b(x, t, \mu)|+h^{*}(\|Q(x, t, \mu)\|)+h^{*}(\Theta(x, t))\le C_2(\mu)V(x).
	$$
	
	Suppose that for every measure $\mu\in\mathcal{M}(V)$
	we are given Borel functions
	$$
	(u, x, t)\mapsto f(u, x, t, \mu) \quad \hbox{\rm and} \quad x\mapsto g(x, \mu)
	$$
    on $U\times \mathbb{R}^d\times[0, T]$ and on $\mathbb{R}^d$, respectively.
	
	\vspace*{0.2cm}
	
	$\bf(H3)$ {\bf (Conditions on $f$ and $g$)}
	
	\vspace*{0.2cm}
	
	$\bf(H3.1)$ The function $g$ is continuous in $x$ and there exists a number $C_g>0$
such that for all $x\in\mathbb{R}^d$ and every $\mu\in\mathcal{M}(V)$ we have
	$$
	|g(x, \mu)|\le C_g\Bigl(W(x)+\sup_{t\in[0, T]}\int_{\mathbb{R}^d}W(y)\mu_t(dy)\Bigr).
	$$
	
	$\bf(H3.2)$ There exist numbers $C_{h}>1$ and $C_f>0$ such that the inequalities
	\begin{multline*}
		h(|u|)-C_f\Bigl(W(x)+\sup_{t\in[0, T]}\int_{\mathbb{R}^d}W(y)\mu_t(dy)\Bigr)
		\le f(u, x, t, \mu)\le
		\\
		\le C_{h}h(|u|)+C_f\Bigl(W(x)+\sup_{t\in[0, T]}\int_{\mathbb{R}^d}W(y)\mu_t(dy)\Bigr)
	\end{multline*}
	hold for every $\mu\in \mathcal{M}(V)$, $u\in\mathbb{R}^{d_1}$, $x\in\mathbb{R}^d$ and $t\in[0, T]$.
	
	$\bf(H3.3)$ For all open balls $B\subset\mathbb{R}^d$, $B_1\subset\mathbb{R}^{d_1}$
	and every number $R>0$ the $V$--weak convergence of measures $\mu^n\in \mathcal{M}_{R}(V)$
	to a measure $\mu\in \mathcal{M}_{R}(V)$ implies the equality
	$$
	\lim_{n\to\infty}\sup_{x\in B}|g(x, \mu^n)-g(x, \mu)|=0
	$$
	and for every $t\in [0, T]$ the equality
	$$
	\lim_{n\to\infty}\sup_{u\in B_1\cap U, x\in B}|f(u, x, t, \mu^n)-f(u, x, t, \mu)|=0.
	$$
	
	$\bf(H3.4)$  The function $f$ is convex in $u$ and the mapping
	$$
	(u, x)\mapsto f(u, x, t, \mu)
	$$
	is continuous on the set $U\times\mathbb{R}^d$ for every measure $\mu\in \mathcal{M}(V)$ and all $t\in[0, T]$.

	Our main result is the following theorem.
	
	\begin{theorem}\label{th1}
		Assume that the conditions $\rm (H1), (H2), (H3)$ are fulfilled and $\nu$ is a probability measure on
        $\mathbb{R}^d$ with $V\in L^1(\nu)$. Then there exists a mapping
		$t\mapsto\mu_t$ from $[0, T]$ to the space of probability measures on $\mathbb{R}^d$ that is continuous
        with respect to the weak topology and there exists a Borel function $(x, t)\mapsto u(x, t)$ from $\mathbb{R}^d\times[0, T]$ to $U$
		such that
		
		{\rm (i)} the measure $\mu=\mu_t\,dt$ belongs to $\mathcal{M}(V)$ and the function $(x, t)\mapsto h(|u(x, t)|)$
		is integrable with respect to the measure $\mu$ on $\mathbb{R}^d\times[0, T]$;
		
		{\rm (ii)} the measure $\mu=\mu_t\,dt$ is a solution to the Cauchy problem for the Fokker--Planck--Kolmogorov equation
		$$
		\partial_t\mu_t=L_{\mu, u(x, t)}^{*}\mu_t, \quad \mu_0=\nu,
		$$
		that is for every $\psi\in C_0^{\infty}(\mathbb{R}^d)$ and for all $t\in[0, T]$ we have
		$$
		\int_{\mathbb{R}^d}\psi(x)\mu_t(dx)-\int_{\mathbb{R}^d}\psi(x)\nu(dx)=\int_0^{t}\int_{\mathbb{R}^d}L_{\mu, u(x, s)}\psi(x, s)\,\mu_s(dx)\,ds;
		$$
		
		{\rm (iii)}
		the inequality
		\begin{multline*}
			\int_0^{T}\int_{\mathbb{R}^d}f(u(x, t), x, t, \mu)\,\mu_t(dx)\,dt
			+\int_{\mathbb{R}^d}g(x, \mu)\mu_{T}(dx)\le
			\\
			\le\int_0^{T}\int_{\mathbb{R}^d}f(v(x, t), x, t, \mu)\,\sigma_t(dx)\,dt
			+\int_{\mathbb{R}^d}g(x, \mu)\sigma_{T}(dx)
		\end{multline*}
		holds for every measure $\sigma_t\,dt$ given by a continuous curve $t\mapsto\sigma_t$
        in the space of probability measures with the weak topology and for every Borel function $(x, t)\mapsto v(x, t)$
		from $\mathbb{R}^d\times[0, T]$ to $U$ such that the function $(x, t)\mapsto h(|v(x, t)|)$ is integrable with respect to the measure
		$\sigma_t\,dt$ and the measure $\sigma_t\,dt$ satisfies the Cauchy problem
		$\partial_t\sigma_t=L_{\mu, v(x, t)}^{*}\sigma_t$, $\sigma_0=\nu$.
	\end{theorem}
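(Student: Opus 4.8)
The plan is to construct the equilibrium $(\mu, u)$ via a fixed-point argument on the set $\mathcal{M}_{R,M}(V)$. First I would fix a measure $\nu$-admissible target $\sigma \in \mathcal{M}_{R,M}(V)$ and freeze the coefficients $A(\cdot,\cdot,\sigma)$, $b(\cdot,\cdot,\sigma)$, $Q(\cdot,\cdot,\sigma)$, $f(\cdot,\cdot,\cdot,\sigma)$, $g(\cdot,\sigma)$. With the nonlinear dependence on $\mu$ removed, one is left with a genuine optimal control problem for a linear Fokker--Planck--Kolmogorov equation: minimize
$$
J_\sigma(v, \eta) = \int_0^T\int_{\mathbb{R}^d} f(v(x,t),x,t,\sigma)\,\eta_t(dx)\,dt + \int_{\mathbb{R}^d} g(x,\sigma)\,\eta_T(dx)
$$
over all pairs $(v,\eta)$ with $v\colon\mathbb{R}^d\times[0,T]\to U$ Borel, $h(|v|)\in L^1(\eta)$, and $\eta=\eta_t\,dt$ solving $\partial_t\eta_t = L_{\sigma,v}^*\eta_t$, $\eta_0=\nu$. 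The auxiliary results of Section~3 should give: (a) existence of at least one admissible pair (take $v\equiv u_0$ for a fixed $u_0\in U$, solve the linear FPK equation, and use $(\mathrm{H}2.1)$ with the Lyapunov function $V$ to get the a~priori bound $\int V\,d\eta_t \le Re^{Mt}$ for suitable $R,M$ depending only on $\int V\,d\nu$ and the constant $C_L$, so $\eta\in\mathcal{M}_{R,M}(V)$); (b) a lower bound on $J_\sigma$ coming from $(\mathrm{H}3.2)$ and $(\mathrm{H}3.1)$, namely $J_\sigma(v,\eta)\ge \int_0^T\int h(|v|)\,d\eta\,dt - C$, which both shows $J_\sigma$ is bounded below and forces minimizing sequences to have $\int\int h(|v_n|)\,d\eta^n\,dt$ bounded; this is the key coercivity estimate that controls the control.

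Next I would run the direct method of the calculus of variations for fixed $\sigma$. Take a minimizing sequence $(v_n,\eta^n)$. The bound on $\int\int h(|v_n|)\,d\eta^n$ together with the superlinearity $h(v)/v\to\infty$ gives, via a de la Vallée-Poussin / Dunford--Pettis argument, tightness and weak compactness of the ``momentum'' measures $q^{im}(x,t,\sigma)\,v_n(x,t)\,\eta^n(dx)\,dt$ and of $\eta^n$ itself (uniform integrability against $V$ from the Lyapunov bound). Passing to a subsequence, $\eta^n\to\eta$ and the momentum measures converge; a measurable-selection / disintegration argument (the Filippov-type convexity lemma, using $(\mathrm{H}3.4)$: $f$ convex in $u$) lets me recover a limiting Borel control $u$ with $Q(x,t,\sigma)u(x,t)\,\eta_t(dx)$ equal to the limiting momentum measure and with
$$
\int_0^T\!\!\int h(|u|)\,d\eta_t\,dt \le \liminf_n \int_0^T\!\!\int h(|v_n|)\,d\eta^n_t\,dt.
$$
Lower semicontinuity of $J_\sigma$ then follows from the convexity of $f$ in $u$ plus the continuity/growth hypotheses $(\mathrm{H}3.1)$--$(\mathrm{H}3.3)$; the limit $\eta$ solves the linear FPK equation with control $u$ (pass to the limit in the weak formulation, using $(\mathrm{H}1)$ to handle the coefficients locally and the $V$-moment bound for the tails). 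This produces a nonempty set $\Phi(\sigma)$ of optimal pairs, all lying in $\mathcal{M}_{R,M}(V)$.

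Finally I would set up the fixed-point map $\sigma\mapsto\Phi(\sigma)$ on $\mathcal{M}_{R,M}(V)$, which is convex and, crucially, compact in the $V$-weak topology: the uniform Lyapunov bound plus equicontinuity of $t\mapsto\eta_t$ (obtained from the FPK equation and $(\mathrm{H}1.1)$, $(\mathrm{H}2.3)$ tested on $C_0^\infty$ functions, then extended to $V$-weak continuity) give relative compactness by an Arzelà--Ascoli argument. To apply the Kakutani--Fan--Glicksberg fixed-point theorem I must check that $\Phi$ has closed graph and convex values. Convexity of $\Phi(\sigma)$ uses convexity of $f$ in $u$ together with linearity of the FPK equation in $(\eta,v\eta)$ — if $(\eta^0,u^0)$ and $(\eta^1,u^1)$ are both optimal, the convex combination (with the control defined as the appropriate momentum-weighted average) is admissible and has value $\le$ the minimum. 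Closedness of the graph is where I expect the main obstacle: I need that if $\sigma_n\to\sigma$ and $\eta^n\in\Phi(\sigma_n)$ with $\eta^n\to\eta$, then $\eta\in\Phi(\sigma)$. This requires (i) passing to the limit in the frozen FPK equation as both the coefficients (via $(\mathrm{H}1.3)$, $(\mathrm{H}2.3)$) and the measure move, keeping the control tight via the uniform $h$-bound, and (ii) a $\Gamma$-liminf/limsup argument showing $J_{\sigma_n}\to J_\sigma$ along optimizers and that no competitor for $\sigma$ can beat the limit — here one builds recovery sequences by plugging a fixed competitor pair $(v,\tau)$ for $\sigma$ into the equation for $\sigma_n$ (solving the linear FPK with coefficients $A(\cdot,\sigma_n)$, etc.), using $(\mathrm{H}3.3)$ to pass to the limit in $J_{\sigma_n}(v,\tau^n)$. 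Once the fixed point $\mu\in\Phi(\mu)$ is obtained, conclusions (i), (ii), (iii) of the theorem are exactly the statement that $(\mu,u)$ is an optimal pair for the problem frozen at $\sigma=\mu$, which is what $\mu\in\Phi(\mu)$ means; the integrability of $h(|u|)$ against $\mu$ and membership $\mu\in\mathcal{M}(V)$ come from the coercivity and Lyapunov estimates established along the way.
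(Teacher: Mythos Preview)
Your overall architecture matches the paper's: freeze $\sigma$, solve a relaxed optimal control problem on the compact convex set $S_R(\sigma)$ via lower semicontinuity, then apply Kakutani--Ky Fan to the multivalued map $\sigma\mapsto\Phi(\sigma)$ on $\mathcal{M}_{R,M}^{\omega}(V)$. The a~priori estimates, the coercivity from (H3.2), and the $\liminf$ half of the closed-graph check are as you describe (these are Lemma~3.1 and Lemma~3.5(i)).

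The genuine gap is the recovery-sequence step, which you correctly flag as the main obstacle but whose resolution you underestimate. Your plan is to take a competitor $(v,\tau)$ for $\sigma$, solve $\partial_t\tau^n_t=L_{\sigma^n,v(x,t)}^{*}\tau^n_t$, $\tau^n_0=\nu$, and pass to the limit in $J_{\sigma^n}(v,\tau^n)$. This fails on two counts. First, $v$ is only known to satisfy $h(|v|)\in L^1(\tau)$; there is no reason $h(|v|)\in L^1(\tau^n)$ with a uniform bound, so $(v,\tau^n)$ need not be admissible and $J_{\sigma^n}(v,\tau^n)$ may be $+\infty$. Second, even if $\tau^n\to\tau$ weakly, the integrand $f(v(x,t),x,t,\sigma^n)$ is merely Borel in $(x,t)$ and unbounded (of order $h(|v|)$), so $\int f(v,\cdot)\,d\tau^n\to\int f(v,\cdot)\,d\tau$ does not follow from weak convergence; neither Remark~3.4 nor the $V$-moment bounds help with an integrand that blows up on a Borel set. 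The paper's fix (Lemma~3.4) is a substantial construction: double the state space to $\mathbb{R}^d_x\times\mathbb{R}^d_y$, solve a coupled FPK equation whose $y$-marginal is \emph{forced} to equal $\tau_t$ while the $x$-block carries the $\sigma^n$-coefficients with the control evaluated at the $y$-coordinate, $u(y,t)$. The resulting relaxed control $\Pi^n\in S_R(\sigma^n)$ then has the \emph{same} $(u,t)$-projection as $\Pi$, so the troublesome $h(|u|)$-integral is fixed for all $n$, and Lemma~3.5(ii) gives full convergence of the cost (not merely $\limsup\le$). Proving that the doubled measure collapses to the diagonal $\{x=y\}$ in the limit uses the one-sided Lipschitz condition (H2.2) tested against $\ln(1+|x-y|^2/(2\delta))$; this is the only place (H2.2) is used, and your sketch never invokes it.
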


Note that in Theorem~\ref{th1} we do not assume that $\sigma\in\mathcal{M}(V)$
since this assumption follows from
the integrability of the function $(x, t)\mapsto h(|v(x, t)|)$ (see Remark~\ref{rem-int}).
For measurable spaces $(X, \mathcal{X})$, $(Y, \mathcal{Y})$, a measure $P$ on $\mathcal{X}$
and a measurable mapping $\phi\colon X\to Y$ let $P\circ\phi^{-1}$ denote the push--forward measure,
that is $P\circ\phi^{-1}(E)=P\bigl(\phi^{-1}(E)\bigr)$ for every $E\in\mathcal{Y}$.

Applying Theorem~\ref{th1} and the Ambrosio--Figalli--Trevisan superposition principle,
we derive the following statement for stochastic mean field games.

	\begin{corollary}\label{cor1}
		Assume that the conditions $\rm (H1), (H2), (H3)$ are fulfilled and $\nu$ is a probability measure
        on $\mathbb{R}^d$ with $V\in L^1(\nu)$. Then there exists a mapping
		$t\mapsto\mu_t$ from $[0, T]$ to the space of probability measures on $\mathbb{R}^d$ that is continuous
        with respect to the weak topology and there exists a Borel function $(x, t)\mapsto u(x, t)$ from $\mathbb{R}^d\times[0, T]$ to $U$
		such that
		
		{\rm (i)} $\mu_0=\nu$, the measure $\mu=\mu_t\,dt$ belongs to $\mathcal{M}(V)$ and the function $(x, t)\mapsto h(|u(x, t)|)$
		is integrable with respect to the measure $\mu$ on $\mathbb{R}^d\times[0, T]$,
		
		{\rm (ii)} there exists a filtered probability space $(\Omega, \mathcal{F}_t, \mathcal{P})$ supporting
		a~$\mathcal{F}_t$--Brownian motion $W$ and a $\mathcal{F}_t$--adapted process $X$
      	such that
		$$
		dX_t=\sqrt{2A(X_t, t, \mu)}dW_t+\bigl(b(X_t, t, \mu\bigr)+Q(X_t, t, \mu)u(X_t, t)\bigr)\,dt
		$$
		and $\mathcal{P}\circ X_t^{-1}=\mu_t$ for all $t\in[0, T]$,
		
		{\rm (iii)} if $(\widetilde{\Omega}, \widetilde{\mathcal{F}}_t, \widetilde{\mathcal{P}})$
        is another filtered probability space
        supporting a $\widetilde{\mathcal{F}}_t$--Brownian motion $\widetilde{W}$,
        a~$\widetilde{\mathcal{F}}_t$--adapted process $Y$ and a $\widetilde{\mathcal{F}}_t$--adapted process $V$
      	such that
		$$
		dY_t=\sqrt{2A(Y_t, t, \mu)}d\widetilde{W}_t+\bigl(b(Y_t, t, \mu\bigr)+Q(Y_t, t, \mu)V_t\bigr)\,dt, \quad
        \nu=\widetilde{\mathcal{P}}\circ Y_0^{-1}, \quad \mathbb{E}h(|V_t|)<\infty,
		$$
		then
		$$
		\mathbb{E}\Bigl[\int_0^{T}f(u(X_t, t), X_t, t, \mu)\,dt+g(X_{T}, \mu)\Bigr]\le
		\mathbb{E}\Bigl[\int_0^{T}f(V_t, Y_t, t, \mu)\,dt+g(Y_{T}, \mu)\Bigr].
		$$
        \end{corollary}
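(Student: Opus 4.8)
The plan is to derive Corollary~\ref{cor1} from Theorem~\ref{th1} in two moves: first, use the Ambrosio--Figalli--Trevisan superposition principle to turn the probabilistic solution $\mu=\mu_t\,dt$ of the (now frozen) linear Fokker--Planck--Kolmogorov equation into a genuine weak solution of the stochastic differential equation; second, reduce the optimality statement of Corollary~\ref{cor1}(iii), which allows arbitrary open--loop controls $V$, to the feedback--control competitors admitted in Theorem~\ref{th1}(iii) by a Markovian projection (conditioning) argument together with the convexity of $f$ in $u$.

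Concretely, I would first invoke Theorem~\ref{th1} to fix $(\mu,u)$ with properties (i)--(iii) there; in particular $\mu\in\mathcal{M}(V)$, $h(|u|)\in L^1(\mu)$, and $\mu$ solves $\partial_t\mu_t=L^{*}_{\mu,u(x,t)}\mu_t$, $\mu_0=\nu$. Freezing $\mu$, this is a linear Fokker--Planck--Kolmogorov equation with coefficients $\bar a^{ij}(x,t)=a^{ij}(x,t,\mu)$ and $\bar b^i(x,t)=b^i(x,t,\mu)+q^{im}(x,t,\mu)u_m(x,t)$. To apply the superposition principle I must check that these coefficients are integrable against $\mu$: from $(\rm H2.3)$ one has $\|A(x,t,\mu)\|+|b(x,t,\mu)|\le C_2(\mu)V(x)$, and the Fenchel inequality $\|Q(x,t,\mu)\|\,|u(x,t)|\le h(|u(x,t)|)+h^{*}(\|Q(x,t,\mu)\|)\le h(|u(x,t)|)+C_2(\mu)V(x)$ bounds $|\bar b|$ by $|b(x,t,\mu)|+h(|u(x,t)|)+C_2(\mu)V(x)$; since $\mu\in\mathcal{M}(V)$ gives $\int_0^{T}\!\int V\,d\mu_t\,dt<\infty$ and $h(|u|)\in L^1(\mu)$, the required integrability holds. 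The superposition principle then produces a probability measure on $C([0,T];\mathbb{R}^d)$ concentrated on solutions of the martingale problem for the time--dependent operator $L_{\mu,u(x,t)}$ with initial law $\nu$ and one--dimensional marginals $\mu_t$; passing, if necessary, to a standard extension of the path space, one obtains a Brownian motion $W$ and a weak solution $X$ of the stochastic differential equation in (ii) (the factor $2$ matching the generator $L_{\mu,u}$) with $\mathcal{P}\circ X_t^{-1}=\mu_t$. This yields (i) and (ii).

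For (iii), let $(\widetilde\Omega,\widetilde{\mathcal{F}}_t,\widetilde{\mathcal{P}},\widetilde W,Y,V)$ be a competitor as in the statement and put $\rho_t=\widetilde{\mathcal{P}}\circ Y_t^{-1}$, $\rho=\rho_t\,dt$; since $Y$ has continuous paths, $t\mapsto\rho_t$ is weakly continuous and $\rho_0=\nu$. Define the feedback control $v(x,t):=\mathbb{E}[V_t\mid Y_t=x]$ via a regular conditional distribution, choosing a jointly Borel version on $\mathbb{R}^d\times[0,T]$; it is $U$--valued because $U$ is convex and closed. Conditional Jensen gives $h(|v(x,t)|)\le\mathbb{E}[h(|V_t|)\mid Y_t=x]$, hence $\int_0^{T}\!\int h(|v|)\,d\rho_t\,dt\le\int_0^{T}\mathbb{E}h(|V_t|)\,dt<\infty$; applying It\^o's formula to $\psi\in C_0^{\infty}(\mathbb{R}^d)$ (the stochastic integral being a true martingale thanks to $\psi$ having compact support and $(\rm H2.3)$) and the tower property $\mathbb{E}[Q(Y_s,s,\mu)V_s\mid Y_s]=Q(Y_s,s,\mu)v(Y_s,s)$ shows that $\rho$ solves $\partial_t\rho_t=L^{*}_{\mu,v(x,t)}\rho_t$, $\rho_0=\nu$. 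By Remark~\ref{rem-int}, $h(|v|)\in L^1(\rho)$ forces $\rho\in\mathcal{M}(V)$, so $(v,\rho)$ is an admissible competitor in Theorem~\ref{th1}(iii), which gives
\begin{multline*}
\int_0^{T}\!\int f(u(x,t),x,t,\mu)\mu_t(dx)\,dt+\int g(x,\mu)\mu_{T}(dx)\le\\
\int_0^{T}\!\int f(v(x,t),x,t,\mu)\rho_t(dx)\,dt+\int g(x,\mu)\rho_{T}(dx).
\end{multline*}
The left side equals $\mathbb{E}\bigl[\int_0^{T}f(u(X_t,t),X_t,t,\mu)\,dt+g(X_{T},\mu)\bigr]$ since $\mathcal{P}\circ X_t^{-1}=\mu_t$, with integrability from $(\rm H3.1)$--$(\rm H3.2)$ and $h(|u|)\in L^1(\mu)$, $\sup_t\int V\,d\mu_t<\infty$. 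On the right side, $\int g(x,\mu)\rho_{T}(dx)=\mathbb{E}g(Y_{T},\mu)$, while convexity of $f$ in $u$ and conditional Jensen give, for each $t$, $f(v(Y_t,t),Y_t,t,\mu)\le\mathbb{E}[f(V_t,Y_t,t,\mu)\mid Y_t]$ a.s.; integrating in $t$ and taking expectations (using the lower bound in $(\rm H3.2)$ together with $\rho\in\mathcal{M}(V)$ to legitimize Fubini) yields $\int_0^{T}\!\int f(v,x,t,\mu)\rho_t(dx)\,dt\le\mathbb{E}\int_0^{T}f(V_t,Y_t,t,\mu)\,dt$. Combining the three relations gives the inequality in (iii).

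I expect the main obstacle to be the Markovian projection step: constructing $v(x,t)=\mathbb{E}[V_t\mid Y_t=x]$ as a genuinely jointly Borel, $U$--valued map and rigorously verifying that $\rho_t\,dt$ solves the frozen linear Fokker--Planck--Kolmogorov equation with this feedback — that is, that conditioning commutes with the equation — together with the attendant integrability bookkeeping needed to justify It\^o's formula, the conditional Jensen inequalities, and Fubini. The invocation of the superposition principle is, by contrast, essentially a black box once the integrability of $\bar a$ and $\bar b$ against $\mu$ has been checked via $(\rm H2.3)$ and the Fenchel inequality.
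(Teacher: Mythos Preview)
Your proposal is correct and follows essentially the same route as the paper: invoke Theorem~\ref{th1}, check via $(\rm H2.3)$ and the Fenchel inequality that the frozen coefficients are $\mu$--integrable, apply the superposition principle (plus the martingale--problem--to--SDE passage from Ikeda--Watanabe) for (ii), and for (iii) perform the Markovian projection $v(y,t)=\mathbb{E}[V_t\mid Y_t=y]$ and use convexity of $f$ together with Jensen's inequality to reduce to a feedback competitor admissible in Theorem~\ref{th1}(iii). The paper addresses your stated obstacle --- the joint Borel measurability of $v$ --- by writing the conditional expectation explicitly through the disintegration of the joint law $\Pi_t(dy\,dv)$ of $(Y_t,V_t)$ with respect to its $y$--marginal $\sigma_t$ (in the spirit of Remark~\ref{r5}), which is exactly the formalization you had in mind.
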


This stochastic differential equation is commonly referred to as the granular media equation
or the McKean--Vlasov equation. In the last two decades this equation and the corresponding nonlinear
Fokker--Planck--Kolmogorov equation have been being studied intensively.
The results on the existence and uniqueness of the solution
can be found in \cite{BogachShap}, \cite{Veret20}, \cite{Huang}.
We stress that nonlinear Fokker--Planck--Kolmogorov equations with coefficients rapidly increasing at infinity
appear in many physical models with unbounded potentials (see \cite{Frank}).
Moreover, in socio-economics models and in traffic models linear and nonlinear
Fokker–Planck–Kolmogorov equations are often considered on a half-line or on an interval with
diffusion coefficients degenerating at the boundary. By a change of variables such
equations are transformed to equations on the line with coefficients rapidly increasing at infinity.
Theorem~\ref{th1} and Corollary~\ref{cor1} allow us to consider mean field games with coefficients rapidly increasing at infinity.

Let us consider examples illustrating the conditions (H1.1), (H2.1), (H2.2), (H2.3), (H3.1) and (H3.2).

\begin{example}\label{ex1}\rm
Let $V(x)=1+|x|^2$, $W(x)=|x|$ and $h(v)=Cv^2$, where $C>0$.
Suppose that there exist positive numbers $C_1$, $C_2$, $C_3$, $C_4$, $C_5$ such that
for every $x, y\in\mathbb{R}^d$, all $t\in[0, T]$ and every $\mu\in\mathcal{M}(V)$
we have

(i) $\|\sqrt{A(x, t, \mu)}-\sqrt{A(y, t, \mu)}\|+\|Q(x, t, \mu)-Q(y, t, \mu)\|+|b(x, t, \mu)-b(y, t, \mu)|\le C_1|x-y|$,

(ii) $\sup_{y\in\mathbb{R}^d}\|Q(y, t, \mu)\|\le C_2$ and
$\displaystyle\|\sqrt{A(0, t, \mu)}\|+|b(0, t, \mu)|\le C_2+C_2\int_{\mathbb{R}^d}|y|\mu_t(dy)$,

(iii) $\displaystyle|g(x, \mu)|\le C_3\bigl(1+|x|\bigr)+C_3\int_{\mathbb{R}^d}|y|\mu_t(dy)$,

(iv) $\displaystyle C|u|^2-C_4\bigl(1+|x|\bigr)-C_4\int_{\mathbb{R}^d}|y|\mu_t(dy)\le f(u, x, t, \mu)\le$

\rightline{$\displaystyle \le C_5|u|^2+C_4\bigl(1+|x|\bigr)+C_4\int_{\mathbb{R}^d}|y|\mu_t(dy)$.}

Then the conditions (H1.1), (H2.1), (H2.2), (H2.3), (H3.1) and (H3.2) are fulfilled.

\begin{proof}
According to the conditions (i) and (ii), we have
$$
\|\sqrt{A(x, t, \mu)}\|+|b(x, t, \mu)|\le C_2+C_1|x|+C_2\int_{\mathbb{R}^d}|y|\mu_t(dy)
$$
Note that
$$
L_{\mu}V(x, t)+h^{*}(|Q^{\top}(x, t)\nabla V(x)|)=
2{\rm trace}\, A(x, t, \mu)+2\langle b(x, t, \mu), x\rangle+\frac{1}{C}|Q^{\top}(x, t)x|^2.
$$
Applying the Cauchy inequality we obtain the estimates
$$
2\big|\langle b(x, t, \mu), x\rangle\big|\le |b(x, t, \mu)|^2+|x|^2, \quad
\Bigl(\int_{\mathbb{R}^d}|x|\mu_t(dx)\Bigr)^2\le \int_{\mathbb{R}^d}|x|^2\mu_t(dx).
$$
which imply
$$
L_{\mu}V(x, t)+h^{*}(|Q^{\top}(x, t)\nabla V(x)|)\le N+NV(x)+\int_{\mathbb{R}^d}V(x)\mu_t(dx),
$$
where the constant $N$ does not depend on $x$, $t$ and $\mu$.
\end{proof}

In the same way we can consider $V(x)=(1+|x|^2)^{s/2}$, $W(x)=|x|^p$ and $h(v)=Cv^r$,
where we assume that $1\le p<s\le r$. In this case we obtain the same conditions but in the condition (ii)
the integral $\displaystyle\int_{\mathbb{R}^d}|x|\mu_t(dx)$ is replaced by the integral
$\displaystyle\Bigl(\int_{\mathbb{R}^d}|x|^p\mu_t(dx)\Bigr)^{1/p}$, in the conditions (iii) and (iv)
the integral $\displaystyle\int_{\mathbb{R}^d}|x|\mu_t(dx)$ is replaced by the integral
$\displaystyle\int_{\mathbb{R}^d}|x|^p\mu_t(dx)$ and $|u|^2$ is replaced by $|u|^r$.

If in addition to the conditions (i), (ii), (iii), (iv)
the continuity conditions (H1.2), (H1.3), (H3.3) and (H3.4) hold, then the conditions (H1), (H2) and (H3) are fulfilled.
\end{example}

\begin{example}\label{ex2}\rm
Let $V(x)=1+|x|^m$, $W(x)=|x|^p$ and $h(v)=Cv^2$, where $m\ge 2$, $1\le p<m$ and $C>0$.
Suppose that there exist number $\varepsilon\in(0, 1/2)$ and positive numbers $C_1$, $C_2$, $C_3$, $C_4$, $C_5$, $C_6$
such that for every $x, y\in\mathbb{R}^d$, all $t\in[0, T]$ and every $\mu\in\mathcal{M}(V)$
we have

(i) $\|\sqrt{A(x, t, \mu)}-\sqrt{A(y, t, \mu)}\|+\|Q(x, t, \mu)-Q(y, t, \mu)\|\le C_1|x-y|$ and
$$
\|\sqrt{A(x, t, \mu)}\|\le C_1\bigl(1+|x|\bigr), \quad \|Q(x, t, \mu)\|\le C_1\bigl(1+|x|^{\varepsilon}\bigr),
$$

(ii) $\displaystyle\langle b(x, t, \mu), x\rangle\le C_2-C_3|x|^{m+1}$ and
$$|b(x, t, \mu)-b(y, t, \mu)|\le C_2(1+|x|^{m-1}+|y|^{m-1})|x-y|,$$

For instance, this condition holds for $b(x, t, \mu)=-x|x|^{m-1}+b_0(t, \mu)$, where $b_0$ is a bounded vector field.

(iii) $\displaystyle|g(x, \mu)|\le C_4\bigl(1+|x|^p\bigr)+C_4\int_{\mathbb{R}^d}|y|^p\mu_t(dy)$,

(iv) $\displaystyle C|u|^2-C_5\bigl(1+|x|^p\bigr)-C_5\int_{\mathbb{R}^d}|y|^p\mu_t(dy)\le f(u, x, t, \mu)\le$

\rightline{$\displaystyle \le C_6|u|^2+C_5\bigl(1+|x|^p\bigr)+C_5\int_{\mathbb{R}^d}|y|^p\mu_t(dy)$.}

Then the conditions (H1.1), (H2.1), (H2.2), (H2.3), (H3.1) and (H3.2) are fulfilled.

\begin{proof}
Using the equality
$$
L_{\mu}V(x)=m|x|^{m-2}{\rm trace} A(x, t, \mu)+m(m-2)|x|^{m-4}\langle A(x, t, \mu)x, x\rangle+
m|x|^{m-2}\langle b(x, t, \mu), x\rangle,
$$
we obtain the estimate
$$
L_{\mu}V(x, t)\le N_1+N_1|x|^m-mC_2|x|^{2m-1},
$$
where the constant $N_1$ does not depend on $x$, $t$ and $\mu$.
Moreover, we have
$$
h^{*}(|Q^{\top}(x, t, \mu)\nabla V(x)|)=\frac{m^2}{4C}\|Q(x, t, \mu)\|^2|x|^{2m-2}\le \frac{m^2C_1^2}{2C}|x|^{2m-2}+
\frac{m^2C_1^2}{2C}|x|^{2m-2+2\varepsilon}.
$$
Since $2m-2+2\varepsilon<2m-1$, we arrive at the estimate
$$
L_{\mu}V(x, t)+h^{*}(|Q^{\top}(x, t, \mu)\nabla V(x)|)\le N_2,
$$
where the constant $N_2$ does not depend on $x$, $t$ and $\mu$. Finally, note that
$$
h^{*}(\|Q(x, t, \mu)\|)\le \frac{C_1^2}{4C}\bigl(1+|x|^{\varepsilon}\bigr)^2\le  N_3V(x),
$$
where the constant $N_3$ also does not depend on $x$, $t$ and $\mu$.
\end{proof}

If in addition to the conditions (i), (ii), (iii), (iv)
the continuity conditions (H1.2), (H1.3), (H3.3) and (H3.4) hold, then the conditions (H1), (H2) and (H3) are fulfilled.
\end{example}

In general, in order to construct functions $h$, $V$ and $W$ the following approach is suggested.
Firstly, we find functions $h$ and $W$ such that the conditions (H3.1) and (H3.2) are fulfilled.
Secondary, using the functions $h$ and $W$, we obtain a function $V$ such that $V\in C^2(\mathbb{R}^d)$,
$W\le V$, $\lim_{|x|\to\infty}V(x)=+\infty$, $\lim_{|x|\to\infty}W(x)/V(x)=0$ and the condition (H2.1) is fulfilled.
Thirdly, we find a function $\Theta$ such that the condition (H2.2) holds.
Finally, we verify the remaining growth conditions.

Let us consider examples illustrating the conditions (H1.2), (H1.3) and (H3.3).

\begin{example}\label{ex3}\rm
Let $p\ge 1$. By $\mathcal{P}_p(\mathbb{R}^d)$ we denote the space
of probability measures $\mu$ such that $|x|^p\in L^1(\mu)$.
Recall that the Kantorovich distance $W_p(\mu, \sigma)$ of order $p$ is defined as the infimum of the integral
$$
\int_{\mathbb{R}^d\times\mathbb{R}^d}|x-y|^p\pi(dxdy)
$$
over all probability measures $\pi$ on $\mathbb{R}^d\times\mathbb{R}^d$ with
projections $\mu$ and $\sigma$ onto the factors (see \cite{BK}). We consider the space $\mathcal{P}_p(\mathbb{R}^d)$
with the distance $W_p$.

Let $\Psi\colon \mathbb{R}^d\times[0, T]\times \mathcal{P}_p(\mathbb{R}^d)\to\mathbb{R}$ be a Borel function.
Assume that for every $t\in[0, T]$ the function $\Psi(x, t, \eta)$ is continuous in $(x, \eta)$.
Let $V(x)=\bigl(1+|x|^2\bigr)^{s/2}$ and $s>p$. For every $\mu\in\mathcal{M}(V)$ we define
$$
\psi(x, t, \mu)=\Psi(x, t, \mu_t).
$$
Let $R>0$. Then the $V$--weak convergence of measures $\mu^n=\mu^n_t\,dt\in\mathcal{M}_R(V)$
to a measure $\mu=\mu_t\,dt\in\mathcal{M}_R(V)$
implies that the equality
$$
\lim_{n\to\infty}\sup_{x\in B}\big|\psi(x, t, \mu^n)-\psi(x, t, \mu)\big|=0
$$
holds for every $t\in[0, T]$ and every open ball $B\subset\mathbb{R}^d$.

\begin{proof}
Let $\overline{B}$ denote the closure of a ball $B$.
Note that $\mathcal{M}_R(V)$ is a compact set in $\mathcal{P}_p(\mathbb{R}^d)$.
It follows that $\overline{B}\times\mathcal{M}_R(V)$ is a compact set and
for every $t\in[0, T]$ the mapping $(x, \eta)\mapsto \Psi(x, t, \eta)$ is uniformly continuous
on $\overline{B}\times\mathcal{M}_R(V)$.
Finally, note
 that the $V$--weak convergence of measures $\mu^n=\mu^n_t\,dt\in\mathcal{M}_R(V)$
to a measure $\mu=\mu_t\,dt\in\mathcal{M}_R(V)$
implies the equality $\lim_{n\to\infty}W_p(\mu_t^n, \mu_t)=0$ for every $t\in[0, T]$.
\end{proof}

Thus if each of the functions $a^{ij}$, $b^i$, $q^{im}$, $g$ and $f$
is given by the same rule as the function~$\psi$, then the conditions
(H1.2), (H1.3), (H3.3) are fulfilled.
\end{example}

\begin{example}\label{ex4}\rm
Let $V(x)=\bigl(1+|x|^2\bigr)^{s/2}$, where $s\ge 1$.
Let $\Phi\colon \mathbb{R}^d\times[0, T]\times\mathbb{R}\to\mathbb{R}$ be a Borel function.
Assume that for every $t\in[0, T]$ the function $\Phi(x, t, r)$ is continuous in $(x, r)$.
For every $\mu\in\mathcal{M}(V)$ we define
$$
\varphi(x, t, \mu)=\Phi\Bigl(x, t, \int_0^T\int_{\mathbb{R}^d}\zeta(y)\mu_{\tau}(dy)\,d\tau\Bigr),
$$
where $\zeta\in C(\mathbb{R}^d)$ and $\lim_{|x|\to+\infty}\zeta(x)/V(x)=0$.
Let $R>0$. Then the $V$--weak convergence of measures $\mu^n=\mu^n_t\,dt\in\mathcal{M}_R(V)$
to a measure $\mu=\mu_t\,dt\in\mathcal{M}_R(V)$
implies that the equality
$$
\lim_{n\to\infty}\sup_{x\in B}\big|\varphi(x, t, \mu^n)-\varphi(x, t, \mu)\big|=0
$$
holds for every $t\in[0, T]$ and every open ball $B\subset\mathbb{R}^d$.

\begin{proof}
Let $C_{\zeta}=\sup_{x\in\mathbb{R}^d}|\zeta(x)|/V(x)$.
Suppose that measures $\mu^n=\mu^n_t\,dt\in\mathcal{M}_R(V)$ converge $V$--weakly to
a measure $\mu=\mu_t\,dt\in\mathcal{M}_R(V)$. Then for every $t\in[0, T]$ we have
$$
\lim_{n\to\infty}\int_{\mathbb{R}^d}\zeta(x)\mu_t^n(dx)=\int_{\mathbb{R}^d}\zeta(x)\mu_t(dx)
\quad \hbox{\rm and} \quad \Bigl|\int_{\mathbb{R}^d}\zeta(x)\mu^n_t(dx)\Bigr|\le C_{\zeta}R.
$$
By Lebesgue's dominated convergence theorem we obtain
$$
\lim_{n\to\infty}\int_0^T\int_{\mathbb{R}^d}\zeta(x)\mu_t^n(dx)\,dt=\int_0^T\int_{\mathbb{R}^d}\zeta(x)\mu_t(dx)\,dt.
$$
Let $\overline{B}$ denote the closure of a ball $B$. Since $\overline{B}\times[-C_{\zeta}R, C_{\zeta}R]$ is
a compact set, the function $\Phi(x, t, r)$ is uniformly continuous in $(x, r)$
on $\overline{B}\times[-C_{\zeta}R, C_{\zeta}R]$.
Therefore the equality
$$
\lim_{n\to\infty}\sup_{x\in B}\Bigg|\Phi\Bigl(x, t, \int_0^T\int_{\mathbb{R}^d}\zeta(y)\mu^n_{\tau}(dy)\,d\tau\Bigr)
-\Phi\Bigl(x, t, \int_0^T\int_{\mathbb{R}^d}\zeta(y)\mu_{\tau}(dy)\,d\tau\Bigr)\Bigg|=0
$$
holds for every $t\in[0, T]$.
\end{proof}

Thus if each of the functions $a^{ij}$, $b^i$, $q^{im}$, $g$ and $f$
is given by the same rule as the function~$\varphi$, then the conditions
(H1.2), (H1.3), (H3.3) are fulfilled.
\end{example}
	\vspace*{0.2cm}
	
	\section{\sc Auxiliary results}
	
	\vspace*{0.2cm}
	
	This section is devoted to the assertions playing the crucial role in the proofs of Theorem~\ref{th1} and Corollary~\ref{cor1}. At the beginning we briefly discuss the weak topology on the space of measures, the set $\mathcal{M}(V)$ and
    Fokker--Planck--Kolmogorov equations.

	\begin{remark}\label{r0}\rm
		Recall that the weak topology on the linear space of bounded Borel measures on $\mathbb{R}^d$
		is generated by the seminorms
		$$
		P_{\varphi}(\mu)=\int_{\mathbb{R}^d}\varphi(x)\mu(dx),
		$$
		where $\varphi$ is a bounded continuous function on $\mathbb{R}^d$.
		The weak topology on the space of probability measures is metrizable,
        for example, this topology is generated by the Kantorovich--Rubinshtein metric
		$$
		d(\mu, \sigma)=\sup\Bigl\{\int\varphi\,d(\mu-\sigma)\colon \, |\varphi(x)|\le 1, |\varphi(x)-\varphi(y)|\le|x-y|\Bigr\}.
		$$
		Similarly one can define the weak topology on the space of bounded Borel measures on the space $\mathbb{R}^d\times[0, T]$ and
this topology is metrizable on the set of nonnegative measures $\mu$ satisfying the equality~$\mu(\mathbb{R}^d\times[0, T])=T$. The sequence of probability measures $\mu^n$ on $\mathbb{R}^d$ is tight if for every
number $\varepsilon>0$ there exists a compact set $K_{\varepsilon}\subset\mathbb{R}^d$ such that
$\mu^n(K_{\varepsilon})\ge 1-\varepsilon$ for all $n$. Let $\mathcal{V}$ be a continuous nonnegative function with $\lim_{|x|\to\infty}\mathcal{V}(x)=+\infty$ and $\mu^n$ be a sequence of probability measures.
The estimate
$$
\sup_n\int_{\mathbb{R}^d}\mathcal{V}(x)\mu^n(dx)<\infty
$$
implies that the sequence $\mu^n$ is tight. According to Prokhorov's theorem, if the sequence $\mu^n$ is tight
then there exists a subsequence $\mu^{n_k}$ and a probability measure $\mu$ such that the measures~$\mu^{n_k}$
converge weakly to the measure $\mu$.
		
Let $\mu^n$ be a sequence of probability measures on $\mathbb{R}^d$. Assume that $\mu^n$ is tight.
If for every function $\psi\in C_{0}^{\infty}(\mathbb{R}^d)$
and every number $\varepsilon>0$ there exists a number $N$ such that the inequality
$$
\Bigl|\int_{\mathbb{R}^d}\psi(x)\mu^n(dx)-\int_{\mathbb{R}^d}\psi(x)\mu^k(dx)\Bigr|<\varepsilon,
$$
holds for all $n, k>N$, then the measures~$\mu^n$ weakly converge to some probability measure $\mu$.
Assertion follows from the Prokhorov theorem and the fact that for every $\psi\in C_0^{\infty}(\mathbb{R}^d)$
the sequence $\displaystyle\int_{\mathbb{R}^d}\psi(x)\mu^n(dx)$ converges.
Moreover, probability measures~$\mu^n$ on $\mathbb{R}^d$
converge weakly to a probability measure~$\mu$ whenever the equality
$$
\lim_{n\to\infty}\int_{\mathbb{R}^d}\psi(x)\mu^n(dx)=\int_{\mathbb{R}^d}\psi(x)\mu(dx)
$$
holds for every $\psi\in C_0^{\infty}(\mathbb{R}^d)$.

Let probability measures~$\mu^n$ on $\mathbb{R}^d$ converge weakly to a probability measure~$\mu$.
Assume that for a number $C>0$ and a nonnegative continuous function $\mathcal{W}$ one has the estimate
$$
\sup_n\int_{\mathbb{R}^d}\mathcal{W}(x)\,\mu^n(dx)\le C.
$$
Below we often use the fact that this estimate implies the inequality
$$
\int_{\mathbb{R}^d}\mathcal{W}(x)\,\mu(dx)\le C.
$$

The weak topology on the space of measures is discussed in
\cite{Bogach-wk} and \cite[Chapter 8]{tm2Bogach}.
	\end{remark}
	
	\begin{remark}\label{r1}\rm
		Let $R>0$ and
		$$
		\beta_{V, W}(R)=\sup_{\eta}R^{-1}\int_{\mathbb{R}^d}W(x)\,\eta(dx),
		$$
		where the supremum is taken over all probability measures $\eta$ satisfying the condition
		$$
		\int_{\mathbb{R}^d}V(x)\eta(dx)\le R.
		$$
		Let us prove that
		$$
		\lim_{R\to+\infty}\beta_{V, W}(R)=0.
		$$
		For every number $\varepsilon>0$ there exists a number $m_{\varepsilon}>0$ such that
		the equality $W(x)\le \varepsilon V(x)$ holds for all $x$ satisfying $V(x)>m_{\varepsilon}$.
		Then we obtain the inequality
		$$
		R^{-1}\int_{\mathbb{R}^d} W(x)\eta(dx)\le \varepsilon+m_{\varepsilon}R^{-1}.
		$$
	\end{remark}
	
	\begin{remark}\label{r2}\rm
		Let $R>0$.
		Suppose that we are given a sequence $\mu^n=\mu^n_t\,dt\in\mathcal{M}_R(V)$ and a measure $\mu=\mu_t\,dt\in\mathcal{M}_R(V)$.
		Assume that for every $t\in[0, T]$ the measures $\mu_t^n$ converge weakly to the measure $\mu_t$. Then the measures
		$\mu^n$ converge $V$--weakly to the measure $\mu$.
		
		Let $\zeta\in C(\mathbb{R}^d)$ and
		$\lim_{|x|\to\infty}|\zeta(x)|/V(x)=0$. For every natural number $N$ we set
		$$
		\zeta_N(x)=\max\Bigl\{-\frac{V(x)}{N}, \min\Bigl\{\frac{V(x)}{N}, \zeta(x)\Bigr\}\Bigr\}.
		$$
		Note that $\zeta_N$ is a continuous function, $|\zeta_N(x)|\le V(x)/N$ and $\zeta(x)=\zeta_N(x)$
		for sufficiently large~$|x|$. Since the function $\zeta-\zeta_N$ is continuous and bounded, we have
		$$
		\lim_{n\to\infty}\int_{\mathbb{R}^d}\Bigl(\zeta(x)-\zeta_N(x)\Bigr)\mu_t^n(dx)=
		\int_{\mathbb{R}^d}\Bigl(\zeta(x)-\zeta_N(x)\Bigr)\mu_t(dx).
		$$
		Taking into account the estimates
		$$
		\Bigl|\int_{\mathbb{R}^d}\zeta(x)\mu_t^n(dx)-
		\int_{\mathbb{R}^d}\Bigl(\zeta(x)-\zeta_N(x)\Bigr)\mu_t^n(dx)\Bigr|\le
		\frac{1}{N}\int_{\mathbb{R}^d}V(x)\mu_t^n(dx)\le \frac{R}{N}
		$$
		and
		$$
		\Bigl|\int_{\mathbb{R}^d}\zeta(x)\mu_t(dx)-
		\int_{\mathbb{R}^d}\Bigl(\zeta(x)-\zeta_N(x)\Bigr)\mu_t(dx)\Bigr|\le \frac{R}{N},
		$$
		we arrive at the equality
		$$
		\lim_{n\to\infty}\int_{\mathbb{R}^d}\zeta(x)\mu_t^n(dx)=
		\int_{\mathbb{R}^d}\zeta(x)\mu_t(dx).
		$$
	\end{remark}

	\begin{remark}\label{r3}\rm
		Suppose that we are given Borel functions $\alpha^{ij}(x, t)$ and $\beta^i(x, t)$ on $\mathbb{R}^d\times[0, T]$.
        Assume also that the matrix $\alpha=(\alpha^{ij})$ is symmetric and nonnegative definite.
        Let us consider the differential operator
		$$
		\mathcal{L}\psi(x, t)={\rm trace}\bigl(\alpha(x, t)D^2\psi(x)\bigr)+\langle\beta(x, t), \nabla\psi(x)\rangle.
		$$
		Let $\nu$ is a probability measure on $\mathbb{R}^d$.
		The key object in our considerations is a probability solution $\mu=\mu_t\,dt$ of the Cauchy problem for the
        Fokker--Planck--Kolmogorov equation
		$$
		\partial_t\mu_t=\mathcal{L}^{*}\mu_t, \quad \mu_0=\nu.
		$$
		We shall say that the measure $\mu=\mu_t\,dt$ is a probability solution on $[0, T]$
        if the measure $\mu$ is given by a family of probability measures $(\mu_t)_{t\in[0, T]}$ on $\mathbb{R}^d$ such that
		the mapping $t\mapsto \mu_t$ is continuous with respect to the weak topology, for every open ball
		$B\subset\mathbb{R}^d$ the functions $\alpha^{ij}$, $\beta^i$ are integrable on $B\times[0, T]$
        with respect to the measure $\mu=\mu_t\,dt$ and the equality
		$$
		\int_{\mathbb{R}^d}\psi(x)\mu_t(dx)-\int_{\mathbb{R}^d}\psi(x)\nu(dx)=
		\int_0^t\int_{\mathbb{R}^d}\mathcal{L}\psi(x, s)\mu_s(dx)\,ds \quad (*)
		$$
        holds for every $\psi\in C_0^{\infty}(\mathbb{R}^d)$ and all $t\in[0, T]$.

		The survey of the modern theory of Fokker--Planck--Kolmogorov equations is discussed in \cite{bookFPK}.
        Moreover, we essentially use the Ambrosio--Figalli--Trevisan superposition principle presenting
        in the papers \cite{Superp21}, \cite{Trev}.
		
        We need a modification of the estimates from \cite[Theorem 7.1.1]{bookFPK} and \cite[Lemma 2.2]{Superp21}.
        Assume that there exist a nonnegative function $\mathcal{V}\in C^2(\mathbb{R}^d)$ and
        nonnegative Borel function $\mathcal{W}$ on $\mathbb{R}^d\times[0, T]$ such that
		$$
		\lim_{|x|\to\infty}\mathcal{V}(x)=+\infty, \quad \mathcal{L}\mathcal{V}(x, t)\le \mathcal{W}(x, t)+C\mathcal{V}(x), \quad
		\int_0^T\int_{\mathbb{R}^d}\mathcal{W}(x, t)\mu_t(dx)\,dt<\infty,
		$$
		where the measure $\mu=\mu_t\,dt$ is a probability solution to the Cauchy problem
		$\partial_t\mu_t=\mathcal{L}^{*}\mu_t$, $\mu_0=\nu$ and $\mathcal{V}\in L^1(\nu)$.
		Then
		$$
		\int_{\mathbb{R}^d}\mathcal{V}(x)\mu_t(dx)\le \Bigl(\int_{\mathbb{R}^d}\mathcal{V}(x)\nu(dx)
		+\int_0^t\int_{\mathbb{R}^d}e^{-Cs}\mathcal{W}(x, s)\mu_s(dx)\,ds\Bigr)e^{Ct}.
		$$

		By \cite[Theorem 6.7.3]{bookFPK} it follows that if the coefficients $\alpha^{ij}$ and $\beta^i$
		are continuous in $x$ and bounded on $B\times[0, T]$ for every open ball $B\subset\mathbb{R}^d$
        then for every probability measure $\nu$ there exists a family of sub-probability measures
		$(\mu_t)_{t\in[0, T]}$ (that is $\mu_t\ge 0$ and $\mu_t(\mathbb{R}^d)\le 1$) such that for every Borel set $E$
        the mapping $t\mapsto\mu_t(E)$ is Borel measurable and for every function $\psi\in C_0^{\infty}(\mathbb{R}^d)$
		the equality ${\rm(*)}$ is fulfilled for almost all $t\in[0, T]$ .
        Moreover, by \cite[Theorem 7.1.1]{bookFPK} it follows that if there exists a function $\mathcal{V}\in C^2(\mathbb{R}^d)$ such that
		$$
		\lim_{|x|\to\infty}\mathcal{V}(x)=+\infty, \quad \mathcal{L}\mathcal{V}(x, t)\le C\mathcal{V}(x)+C, \quad
		\mathcal{V}\in L^1(\nu),
		$$
		then $\mu_t(\mathbb{R}^d)=1$ and $\|\mathcal{V}\|_{L^1(\mu_t)}\le C'$ for almost all $t\in[0, T]$,
        where the constant $C'$ does not depend on $t$. According to the \cite[Proposition 4.1]{izvest25},
        one can redefine the solution $\mu_t$ in such a way that the equality (*) is fulfilled for all $t\in[0, T]$.
        Note that for every $\psi\in C_0^{\infty}(\mathbb{R}^d)$ there exists a number $C(\psi)>0$ such that
        the estimate
        $$
        \Bigl|\mathcal{L}\psi(x, t)\Bigr|\le C(\psi)
        $$
        holds for all $(x, t)\in\mathbb{R}^d\times[0, T]$. Using the integral identity (*),
        we derive the inequality
        $$
		\Bigl|\int_{\mathbb{R}^d}\psi(x)\,d\mu_t-\int_{\mathbb{R}^d}\psi(x)\,d\mu_s\Bigr|\le C(\psi)|t-s|, \quad s, t\in[0, T].
		$$
		This inequality implies that $\mu_t(\mathbb{R}^d)=1$ for all $t\in[0, T]$ and the mapping $t\mapsto\mu_t$ is continuous.
	\end{remark}
	
\begin{remark}\label{rem-int}\rm
In Theorem~\ref{th1} it is assumed the integrability of the function $h(|v(x, t)|)$
with respect to the measure $\sigma=\sigma_t\,dt$, where $\sigma=\sigma_t\,dt$ is a probability solution
to the Cauchy problem
$$
\partial_t\sigma_t=L_{\mu, v(x, t)}^{*}\sigma_t, \quad \sigma_0=\nu.
$$
However it is not assumed the condition
$$
\sup_{t\in[0, T]}\int_{\mathbb{R}^d}V(x)\sigma_t(dx)<\infty.
$$
Let us remark that, according to the condition (H2.1), there exists a number $C(\mu)>0$ such that
$$
L_{\mu, v(x, t)}V(x, t)\le C_LV(x)+C(\mu)+h(|v(x, t)|).
$$
By the previous remark we obtain the estimate
$$
\int_{\mathbb{R}^d}V(x)\sigma_t(dx)\le \Bigl(\int_{\mathbb{R}^d}V(x)\nu(dx)
+\int_0^T\int_{\mathbb{R}^d}h(|v(x, s)|)\sigma_s(dx)\,ds+C(\mu)T\Bigr)e^{C_Lt}.
$$
Thus the integrability of the function $(x, t)\mapsto h(|v(x, t)|)$ implies $\sigma\in\mathcal{M}(V)$.
\end{remark}

	We need the following a priori estimates.
	
	\begin{lemma}\label{lem1}
		Suppose that the conditions $\rm (H1), (H2), (H3)$ are fulfilled and $\nu$ is a probability measure on
        $\mathbb{R}^d$ such that $V\in L^1(\nu)$. Let $\alpha$ be a nonnegative function on $[0, +\infty)$ and $\lim_{R\to+\infty}\alpha(R)=0$.
		Then for all $\gamma>0$ and $M\ge 5C_L$ there exists a number $R_0>0$ such that for every $R>R_0$ and $t\in[0, T]$
        the estimates
		$$
		\int_{\mathbb{R}^d} V\,d\mu_t\le Re^{Mt}, \quad
		\int_0^{T}\int_{\mathbb{R}^d}h(|u(x, t)|)\,d\mu_t\,dt\le\gamma R.
		$$
		are fulfilled for every Borel mapping $u\colon \mathbb{R}^d\times[0, T]\to U$ and
		for every measure $\mu\in \mathcal{M}(V)$ satisfying the following conditions:
		{\rm 1)} the measure $\mu=\mu_t\,dt$ is a probability solution to the Cauchy problem
        $\partial_t\mu=L^{*}_{\sigma, u(x, t)}\mu$, $\mu_0=\nu$, where $\sigma\in\mathcal{M}_{R,M}(V)$,
		{\rm 2)} the estimate
		$$
		\int_0^{T}\int_{\mathbb{R}^d}f(u(x, t), x, t, \sigma)\,\mu_t(dx)\,dt
		+\int_{\mathbb{R}^d}g(x, \tau, \sigma)\mu_{\tau}(dx)\le \alpha(Re^{MT})Re^{MT}
		$$
		holds. Note that the constant $R_0$ depends only on the function $\alpha$ and the numbers
        $\gamma$, $M$, $C_f$, $C_g$, $C_L$, $T$, $\|V\|_{L^1(\nu)}$.
	\end{lemma}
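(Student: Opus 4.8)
Throughout write $\Lambda=\sup_{t\in[0,T]}\int_{\mathbb{R}^d}V\,d\mu_t$, which is finite because $\mu\in\mathcal{M}(V)$, and set $P=\int_0^{T}\int_{\mathbb{R}^d}h(|u(x,t)|)\,\mu_t(dx)\,dt$ and $w_{\sigma}=\sup_{t\in[0,T]}\int_{\mathbb{R}^d}W\,d\sigma_t$; the latter is finite because $\sigma\in\mathcal{M}_{R,M}(V)$. The optimality bound in~2), combined with the lower bound for $f$ in (H3.2) and the bound for $g$ in (H3.1), already forces $P<\infty$, so that the a~priori estimate of Remark~\ref{r3} applies below. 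The plan is to produce two coupled inequalities --- a Lyapunov-type bound for $\int V\,d\mu_t$ built from (H2.1), and a bound for $P$ read off from~2) --- and then to close the loop using that $\beta_{V,W}(\rho)\to0$ as $\rho\to+\infty$ (Remark~\ref{r1}) and that $\alpha(\rho)\to0$.

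First I would establish the Lyapunov bound. Applying (H2.1) with the measure $\sigma$ and using the Fenchel--Young inequality to dominate the control drift,
$$
\langle Q(x,t,\sigma)u,\nabla V(x)\rangle=\langle u,Q^{\top}(x,t,\sigma)\nabla V(x)\rangle\le h(|u|)+h^{*}\bigl(|Q^{\top}(x,t,\sigma)\nabla V(x)|\bigr),
$$
so that the two $h^{*}$-terms cancel and
$$
L_{\sigma,u(x,t)}V(x,t)\le C_L V(x)+C_L\int_{\mathbb{R}^d}V\,d\sigma_t+C_L w_{\sigma}+h(|u(x,t)|).
$$
Inserting this into Remark~\ref{r3} with $\mathcal V=V$, $C=C_L$, $\mathcal W(x,t)=C_L\int V\,d\sigma_t+C_L w_{\sigma}+h(|u(x,t)|)$, using $\int V\,d\sigma_t\le Re^{Mt}$ and the elementary computation $\int_0^{t}e^{-C_L s}C_L Re^{Ms}\,ds\le\frac{C_L}{M-C_L}Re^{(M-C_L)t}$ with $\frac{C_L}{M-C_L}\le\tfrac14$ (here the hypothesis $M\ge5C_L$ enters), I would obtain
$$
\int_{\mathbb{R}^d}V\,d\mu_t\le\tfrac14\,Re^{Mt}+e^{C_LT}\bigl(\|V\|_{L^1(\nu)}+w_{\sigma}+P\bigr),\qquad t\in[0,T].
$$

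Next I would bound $P$. Integrating the lower bound of (H3.2) against $\mu=\mu_t\,dt$, using $\int_0^T\int W\,d\mu_t\,dt\le T\sup_t\int W\,d\mu_t$, and then using~2) together with $|g(x,\sigma)|\le C_g(W(x)+w_{\sigma})$ from (H3.1), I would get
$$
P\le\alpha(Re^{MT})Re^{MT}+(C_fT+C_g)\Bigl(\sup_{t}\int_{\mathbb{R}^d}W\,d\mu_t+w_{\sigma}\Bigr).
$$
To close the loop, fix $\varepsilon>0$ with $e^{C_LT}(C_fT+C_g)\varepsilon\le\tfrac14$ and $m_\varepsilon$ with $W\le\varepsilon V+m_\varepsilon$, so that $\sup_t\int W\,d\mu_t\le\varepsilon\Lambda+m_\varepsilon$. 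Taking the supremum over $t$ in the Lyapunov bound, inserting the bound for $P$, and moving the term $e^{C_LT}(C_fT+C_g)\varepsilon\Lambda\le\tfrac14\Lambda$ to the left, one finds $\Lambda\le\tfrac13 Re^{MT}+o(Re^{MT})$, the error term collecting $\alpha(Re^{MT})Re^{MT}$, $w_{\sigma}\le\beta_{V,W}(Re^{MT})Re^{MT}$ and fixed constants, all of which are $o(Re^{MT})$ since $\beta_{V,W}(\rho)\to0$. Hence $\Lambda\le\tfrac12 Re^{MT}$ for $R$ large; reinserting this bound gives $\sup_t\int W\,d\mu_t\le\beta_{V,W}\bigl(\tfrac12 Re^{MT}\bigr)\cdot\tfrac12 Re^{MT}$, so the bound for $P$ improves to $P\le\delta(R)Re^{MT}$ with $\delta(R)\to0$. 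Finally I would pick $R_0$ so large that for $R\ge R_0$ one has $\delta(R)e^{MT}\le\gamma$ and each of $e^{C_LT}\|V\|_{L^1(\nu)}$, $e^{C_LT}w_{\sigma}$, $e^{C_LT}P$ is at most $\tfrac16 R$: then $P\le\gamma R$, and the Lyapunov bound becomes $\int V\,d\mu_t\le\tfrac14 Re^{Mt}+\tfrac12 R\le Re^{Mt}$ for every $t\in[0,T]$. By construction every threshold depends only on $\alpha$, $\gamma$, $M$, $C_f$, $C_g$, $C_L$, $T$, $\|V\|_{L^1(\nu)}$ (with $V$, $W$, $h$ fixed throughout), as claimed.

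The step I expect to be the main obstacle is the last one. Since a priori only the finiteness of $\Lambda$ is available, the Lyapunov bound and the bound for $P$ must be treated as a genuine coupled system, and one must check that every error term is really $o(Re^{MT})$; in particular, recovering the sharp time-dependence $e^{Mt}$ --- rather than the crude $e^{MT}$ --- in the first assertion is exactly what imposes the quantitative requirement $M\ge5C_L$, via $\frac{C_L}{M-C_L}\le\tfrac14$.
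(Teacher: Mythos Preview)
Your plan is correct and follows essentially the same route as the paper: both combine the Lyapunov estimate coming from (H2.1) and Remark~\ref{r3} (using Fenchel--Young to absorb the control drift into $h(|u|)+h^{*}(|Q^{\top}\nabla V|)$ and exploiting $\frac{C_L}{M-C_L}\le\tfrac14$) with the bound on $P=\int h(|u|)\,d\mu$ obtained from condition~2) via (H3.1)--(H3.2), then close the coupled system using $\beta_{V,W}(\rho)\to0$ and $\alpha(\rho)\to0$. The only cosmetic difference is bookkeeping: the paper keeps the time-dependent quantity $\int V\,d\mu_t$ throughout, integrates over $t$, and adds the $t=T$ instance to absorb the cross terms, whereas you first bound $\Lambda=\sup_t\int V\,d\mu_t$, then go back to recover the sharp $e^{Mt}$-profile; both work, and your version is arguably more transparent.
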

	\begin{proof}
For every $\sigma\in\mathcal{M}_{R,M}(V)$ the inequality
$$
\int_{\mathbb{R}^d}V(x)\sigma_t(dx)\le Re^{MT}
$$
holds for every $t\in[0, T]$. Taking into account the definition of the function $\beta_{V, W}$
from Remark~\ref{r1} we obtain
$$
\sup_{t\in[0, T]}\int_{\mathbb{R}^d}W(x)\sigma_t(dx)\le \beta_{V, M}(Re^{MT})Re^{MT}.
$$
		Applying the inequalities from the conditions (H3.1) and (H3.2), we arrive at the estimate
		\begin{multline*}
			\int_0^{T}\int_{\mathbb{R}^d}h(|u(x, t)|)\,d\mu_t\,dt\le
			\alpha(Re^{MT})Re^{MT}+
\\
C_f\int_0^{T}\int_{\mathbb{R}^d}W(x)\,\mu_t(dx)\,dt+C_fT\beta_{V, W}(Re^{MT})Re^{MT}+
			\\
			C_g\int_{\mathbb{R}^d}W(x)\,\mu_{T}(dx)+C_g\beta_{V, W}(Re^{MT})Re^{MT}.
		\end{multline*}

Let $\varepsilon>0$.		
Arguing as in Remark~\ref{r1}, we find a number
$m_{\varepsilon}>0$ such that the inequality $W(x)\le \varepsilon V(x)$ holds
for all $x$ satisfying the inequality $V(x)>m_{\varepsilon}$. Furthermore, there exists $R_1>0$
such that for all $R>R_1$ one has the estimates
$$
\alpha(Re^{MT})e^{MT}<\varepsilon \quad \hbox{\rm and} \quad \beta_{V, W}(Re^{MT})e^{MT}<\varepsilon.
$$
		Then we obtain
		\begin{multline*}
			\int_0^{T}\int_{\mathbb{R}^d}h(|u(x, t)|)\,d\mu_t\,dt\le
			\varepsilon(1+C_fT+C_g\bigr)R+m_{\varepsilon}(C_fT+C_g)+
			\\
			\varepsilon C_f\int_0^T\int_{\mathbb{R}^d}V(x)\mu_t(dx)\,dt+\varepsilon C_g\int_{\mathbb{R}^d}V(x)\mu_T(dx).
		\end{multline*}
        Let us remark that
        \begin{multline*}
        \langle Q(x, t, \sigma)u(x, t), \nabla V(x)\rangle\le \big|Q^{\top}(x, t, \sigma)\nabla V(x)\bigr|\big|u(x, t)\big|\le
        \\
        \le h^{*}\Bigl(\big|Q^{\top}(x, t, \sigma)\nabla V(x)\big|\Bigr)+h\Bigl(\big|u(x, t)\big|\Bigr).
        \end{multline*}
		Using the condition (H2.1), we get the inequalities
		\begin{multline*}
		L_{\sigma, u(x, t)}V(x, t)\le h(|u(x, t)|)+C_L V(x) + C_L\int_{\mathbb{R}^d}V(y)\sigma_t(dy)
+C_L\sup_{t\in[0, T]}\int_{\mathbb{R}^d}W(y)\sigma_t(dy)\le
\\
h(|u(x, t)|)+C_L V(x) + C_L\int_{\mathbb{R}^d}V(y)\sigma_t(dy)+C_L\beta_{V, W}(Re^{MT})Re^{MT}
		\end{multline*}
		which implies the estimate
		$$
		L_{\sigma, u(x, t)}V(x, t)\le h(|u(x, t)|)+C_L V(x)+C_LRe^{Mt}+\varepsilon C_L R.
		$$
        Taking into account the estimate from Remark~\ref{r3}, we obtain
		$$
		\int_{\mathbb{R}^d}V\,d\mu_t\le e^{C_Lt}\Bigl(\int_{\mathbb{R}^d}V(x)\nu(dx)+
\int_0^T\int_{\mathbb{R}^d}h(|u(x, s)|)\mu_s(dx)\,ds+\varepsilon C_LRT\Bigr)+\frac{C_L}{M-C_L}Re^{Mt}.
		$$
Since $M\ge 5C_L$, we have
$$
\frac{C_L}{M-C_L}\le\frac{1}{4}.
$$
Set
$$
\theta=\Bigl(1+C_LT+C_fT+C_g+C_f+\int_{\mathbb{R}^d}V(x)\nu(dx)\Bigr)e^{C_LT}.
$$
Then we obtain
\begin{multline*}
\int_{\mathbb{R}^d}V(x)\mu_t(dx)\le \frac{1}{4}Re^{Mt}+\bigl(1+m_{\varepsilon}\bigr)\theta
+\varepsilon\theta R
\\
+\varepsilon\theta\int_0^T\int_{\mathbb{R}^d} V(x)\mu_t(dx)\,dt
+\varepsilon\theta\int_{\mathbb{R}^d} V(x)\mu_T(dx).	
\end{multline*}
Integrating this inequality with respect to $t$, we get the estimate
\begin{multline*}
\int_0^T\int_{\mathbb{R}^d}V(x)\mu_t(dx)\,dt\le \frac{1}{4M}Re^{MT}+\bigl(1+m_{\varepsilon}\bigr)\theta T
+\varepsilon\theta RT
\\
+\varepsilon\theta T\int_0^T\int_{\mathbb{R}^d} V(x)\mu_t(dx)\,dt
+\varepsilon\theta T\int_{\mathbb{R}^d} V(x)\mu_T(dx).	
\end{multline*}
Summing this inequality and the previous inequality with $t=T$, we obtain
\begin{multline*}
\int_0^T\int_{\mathbb{R}^d}V(x)\mu_t(dx)\,dt+
\int_{\mathbb{R}^d}V(x)\mu_T(dx)\le \frac{M+1}{4M}Re^{MT}+\bigl(1+m_{\varepsilon}\bigr)(1+T)\theta
+\varepsilon\theta R(1+T)
\\
+\varepsilon\theta(1+T)\int_0^T\int_{\mathbb{R}^d} V(x)\mu_t(dx)\,dt
+\varepsilon\theta(1+T)\int_{\mathbb{R}^d} V(x)\mu_T(dx).	
\end{multline*}
Choosing $\varepsilon>0$ such that $\varepsilon\theta(1+T)\le 1/2$, we derive the estimate
$$
\int_0^T\int_{\mathbb{R}^d}V(x)\mu_t(dx)\,dt+
\int_{\mathbb{R}^d}V(x)\mu_T(dx)\le \frac{M+1}{2M}Re^{MT}+2\bigl(1+m_{\varepsilon}\bigr)(1+T)\theta
+2\varepsilon\theta R(1+T).
$$
Hence for all $t\in[0, T]$ we have
\begin{multline*}
\int_{\mathbb{R}^d}V(x)\mu_t(dx)\le \frac{1}{4}Re^{Mt}+\bigl(1+m_{\varepsilon}\bigr)\theta
+\varepsilon\theta R
\\
+\varepsilon\theta\Bigl(\frac{M+1}{2M}Re^{MT}+2\bigl(1+m_{\varepsilon}\bigr)(1+T)\theta
+2\varepsilon\theta R(1+T)\Bigr).	
\end{multline*}
Choosing $\varepsilon>0$ so small that
$$
\varepsilon\theta\Bigl(1+\frac{M+1}{2M}e^{MT}+2\varepsilon\theta(1+T)\Bigr)\le\frac{1}{4},
$$
we obtain the inequality
$$
\int_{\mathbb{R}^d}V(x)\mu_t(dx)\le \frac{1}{4}Re^{Mt}+\frac{1}{4}R+\bigl(1+m_{\varepsilon}\bigr)\theta
+2\varepsilon\theta^2(1+m_{\varepsilon})(1+T).
$$
There exists a number $R_2>R_1$ such that for every $R>R_2$ and all $t\in[0, T]$ the estimate
$$
\int_{\mathbb{R}^d}V(x)\mu_t(dx)\le Re^{Mt}
$$
holds. Applying this estimate, we obtain
$$
			\int_0^{T}\int_{\mathbb{R}^d}h(|u(x, t)|)\,d\mu_t\,dt\le
			\varepsilon(1+C_fT+C_g\bigr)R+m_{\varepsilon}(C_fT+C_g)+
			\varepsilon C_f M^{-1}Re^{MT}+\varepsilon C_g R e^{MT}.
$$
Choosing $\varepsilon>0$ small enough we get
$$
\varepsilon(1+C_fT+C_g\bigr)+\varepsilon C_fM^{-1}e^{MT}+\varepsilon C_g e^{MT}\le\frac{\gamma}{2}.
$$
Then the estimate
$$
			\int_0^{T}\int_{\mathbb{R}^d}h(|u(x, t)|)\,d\mu_t\,dt\le \frac{\gamma}{2}R+
			m_{\varepsilon}(C_fT+C_g)
$$
holds. There exists a number $R_0>R_2$ such that for all $R>R_0$ we have
$$
\int_0^{T}\int_{\mathbb{R}^d}h(|u(x, t)|)\,d\mu_t\,dt\le \gamma R
$$
Note that the number $\varepsilon$ depends only on the numbers
$M$, $\gamma$, $C_f$, $C_g$, $C_L$, $T$ and $\|V\|_{L^1(\nu)}$.
The numbers $R_0$, $R_1$ and $R_2$ depends on $\varepsilon$, the functions $\alpha$ and $\beta_{V, W}$.		
\end{proof}

	Below we shall use some special compact sets
    in the space of bounded nonnegative Borel measures $\mu$ on $\mathbb{R}^d\times[0, T]$ satisfying the equality
	$\mu(\mathbb{R}^d\times[0, T])=T$.
	
	Suppose that for every function $\psi\in C_0^{\infty}(\mathbb{R}^d)$ we are given a continuous
	nondecreasing function~$\omega_{\psi}$ on $[0, T]$ such that $\omega_{\psi}(0)=0$.
	Set $\omega=\{\omega_{\psi}\}$.
    Let $R>0$ and $M\ge 0$. Denote by $\mathcal{M}_{R, M}^{\omega}(V)$ the set of measures
    $\mu=\mu_t\,dt\in\mathcal{M}_{R, M}(V)$ such that
	for every function $\psi\in C_0^{\infty}(\mathbb{R}^d)$ the inequality
	$$
	\Bigl|\int_{\mathbb{R}^d}\psi\,d\mu_t-\int_{\mathbb{R}^d}\psi\,d\mu_s\Bigr|\le \omega_{\psi}(|t-s|)
	$$
    holds for all $t, s\in[0, T]$. Note that $\mathcal{M}_{R, M}^{\omega}(V)$ is a convex set in the
    space of bounded nonnegative Borel measures $\mu$ on $\mathbb{R}^d\times[0, T]$
    satisfying the condition $\mu(\mathbb{R}^d\times[0, T])=T$.
	
	\begin{lemma}\label{lem2}
		{\rm (i)} The class $\mathcal{M}_{R, M}^{\omega}(V)$ is a compact set in the weak topology
        on the space of bounded nonnegative Borel measures $\mu$ on $\mathbb{R}^d\times[0, T]$
        satisfying the condition $\mu(\mathbb{R}^d\times[0, T])=T$.
		
		{\rm (ii)}
		If measures $\mu^n\in \mathcal{M}_{R, M}^{\omega}(V)$
		converge weakly to a measure $\mu\in\mathcal{M}_{R, M}^{\omega}(V)$, then
		the measures $\mu^n$ converge $V$--weakly to the measure $\mu$.
	\end{lemma}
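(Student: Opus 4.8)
The plan is to deduce (i) from sequential compactness (which suffices, since the ambient space of bounded nonnegative measures of total mass $T$ on $\mathbb{R}^d\times[0,T]$ is metrizable in the weak topology by Remark~\ref{r0}), and to obtain (ii) from the same equicontinuity argument together with Remark~\ref{r2}.

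\emph{Tightness and disintegration.} Let $\mu^n=\mu^n_t\,dt$ be a sequence in $\mathcal{M}_{R,M}^{\omega}(V)$. The bound $\int_{\mathbb{R}^d}V\,d\mu^n_t\le Re^{Mt}\le Re^{MT}$ gives $\int_{\mathbb{R}^d\times[0,T]}V(x)\,\mu^n(dx\,dt)\le TRe^{MT}$; since $(x,t)\mapsto V(x)$ is continuous, nonnegative and has compact sublevel sets $\{V\le c\}\times[0,T]$ in $\mathbb{R}^d\times[0,T]$, the sequence $\mu^n$ is tight, so by Prokhorov's theorem (Remark~\ref{r0}) some subsequence, not relabelled, converges weakly to a bounded nonnegative measure $\mu$ with $\mu(\mathbb{R}^d\times[0,T])=T$. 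Testing against functions depending on $t$ only shows that the projection of $\mu$ onto $[0,T]$ is Lebesgue measure, hence $\mu$ disintegrates as $\mu=\mu_t\,dt$ for a Borel family of probability measures $(\mu_t)$ defined for a.e.\ $t$; testing against $\phi(t)\min\{V(x),k\}$ with $\phi\in C([0,T])$, $\phi\ge 0$, and letting $k\to\infty$ gives $\int_{\mathbb{R}^d}V\,d\mu_t\le Re^{Mt}$ for a.e.\ $t$.

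\emph{Equicontinuity and identification of the limit.} Fix $\psi\in C_0^{\infty}(\mathbb{R}^d)$ and put $G^n_{\psi}(t)=\int_{\mathbb{R}^d}\psi\,d\mu^n_t$. These functions are bounded by $\|\psi\|_{\infty}$ and equicontinuous with modulus $\omega_{\psi}$, hence relatively compact in $C([0,T])$ by Arzel\`a--Ascoli; integrating against $\phi\in C([0,T])$ and using $\mu^n\to\mu$ weakly, every uniform limit point of $(G^n_{\psi})$ agrees a.e.\ (hence, both sides being continuous, everywhere) with a single $G_{\psi}\in C([0,T])$ satisfying $G_{\psi}(t)=\int\psi\,d\mu_t$ for a.e.\ $t$ and $|G_{\psi}(t)-G_{\psi}(s)|\le\omega_{\psi}(|t-s|)$, so $G^n_{\psi}\to G_{\psi}$ uniformly. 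Choosing a sup-norm dense countable set $\{\psi_j\}\subset C_0^{\infty}(\mathbb{R}^d)$ and a common $dt$-null set $N$ off which $\int\psi_j\,d\mu_t=G_{\psi_j}(t)$ for all $j$, the estimate $|\int\psi_j\,d\mu_t-\int\psi_j\,d\mu_s|\le\omega_{\psi_j}(|t-s|)$ together with Remark~\ref{r0} and the tightness coming from the moment bound shows that $t\mapsto\mu_t$ is weakly uniformly continuous on $[0,T]\setminus N$; redefining $\mu_t$ on $N$ by its unique weakly continuous extension changes neither $\mu=\mu_t\,dt$ nor the a.e.\ moment bound, and yields $\int\psi\,d\mu_t=G_{\psi}(t)$ for all $t$ and all $\psi\in C_0^{\infty}$, hence $|\int\psi\,d\mu_t-\int\psi\,d\mu_s|\le\omega_{\psi}(|t-s|)$ for all $t,s$. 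Finally, weak lower semicontinuity of $\eta\mapsto\int V\,d\eta$ along $t_k\to t$ with $t_k\notin N$ upgrades the moment bound to $\int V\,d\mu_t\le Re^{Mt}$ for every $t$. Thus $\mu\in\mathcal{M}_{R,M}^{\omega}(V)$, which proves (i).

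\emph{Part (ii) and the main obstacle.} If $\mu^n\to\mu$ weakly with $\mu^n,\mu\in\mathcal{M}_{R,M}^{\omega}(V)$, the argument above applied with $G_{\psi}(t):=\int\psi\,d\mu_t$ (already continuous) gives $G^n_{\psi}\to G_{\psi}$ uniformly on $[0,T]$, so $\int\psi\,d\mu^n_t\to\int\psi\,d\mu_t$ for every $t$ and every $\psi\in C_0^{\infty}(\mathbb{R}^d)$; by Remark~\ref{r0} this means $\mu^n_t\to\mu_t$ weakly for each $t$, and since $\mathcal{M}_{R,M}(V)\subset\mathcal{M}_{Re^{MT}}(V)$, Remark~\ref{r2} gives $V$--weak convergence. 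The crux throughout is the passage from weak convergence of the space--time measures $\mu^n$ to convergence of the time slices $\mu^n_t$ at \emph{every} fixed $t$ and to the membership of the limit in the class; the uniform modulus $\omega_{\psi}$ is precisely what makes this possible, since plain weak convergence of $\mu^n$ controls the slices only in an averaged sense.
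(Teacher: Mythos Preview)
Your proof is correct and relies on the same ingredients as the paper's (tightness from the uniform $V$-moment bound, the equicontinuity modulus $\omega_\psi$, and Remarks~\ref{r0} and~\ref{r2}), but the order of operations differs: the paper first runs a diagonal argument on the time slices $\mu^n_t$ at rational $t$ and uses $\omega_\psi$ to propagate convergence to every $t$, only afterwards checking weak convergence of the full space--time measures, whereas you first invoke Prokhorov on $\mathbb{R}^d\times[0,T]$, disintegrate the limit, and then use Arzel\`a--Ascoli on $t\mapsto\int\psi\,d\mu^n_t$ to identify the slices. For part~(ii) the paper uses a subsequence--of--subsequence argument (every subsequence of $\mu^n_t$ has a further subsequence converging to some $\widetilde\mu_t$, and $\widetilde\mu_t\,dt=\mu_t\,dt$ forces $\widetilde\mu_t=\mu_t$ by continuity), while you obtain the same conclusion via uniform convergence of $G^n_\psi$ to $G_\psi$. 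Both routes are equally short; your disintegration/Arzel\`a--Ascoli packaging is perhaps slightly cleaner conceptually, while the paper's direct construction of $\mu_t$ avoids the small redefinition step on a null set.
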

	\begin{proof}
		Let us prove (i). Let $\mu^n=\mu_t^n\,dt\in \mathcal{M}_{R, M}^{\omega}(V)$.
		Denote by $\mathbb{Q}$ the set of rational numbers. Note that for every $n$ the estimate
		$$
		\int_{\mathbb{R}^d}V(x)\,d\mu_t^n\le Re^{Mt}
		$$
		holds for all $t\in[0, T]$. Applying the Prokhorov theorem and the diagonal procedure, we find a sequence
        of numbers $n_j$ such that for all $t\in \mathbb{Q}\cap [0, T]$
        the measures $\mu_t^{n_j}$ converge weakly to a probability measure $\mu_t$.
		For every function $\psi\in C^{\infty}_0(\mathbb{R}^d)$ and all $s, t\in[0, T]$ the estimate
		$$
		\Bigl|\int_{\mathbb{R}^d}\psi\,d\mu_t^{n_j}-\int_{\mathbb{R}^d}\psi\,d\mu_s^{n_j}\Bigr|\le
		\omega_{\psi}(|t-s|)
		$$
        holds. Take a number $t\in[0, T]$. For  every $\varepsilon>0$ there exists a rational number
		$r$ such that
		$$
		\Bigl|\int_{\mathbb{R}^d}\psi\,d\mu_t^{n_j}-\int_{\mathbb{R}^d}\psi\,d\mu_r^{n_j}\Bigr|\le \varepsilon.
		$$
		There exists a natural number $N$ such that for all $j, k>N$ one hase the estimate
		$$
		\Bigl|\int_{\mathbb{R}^d}\psi\,d\mu_r^{n_j}-\int_{\mathbb{R}^d}\psi\,d\mu_r^{n_k}\Bigr|\le \varepsilon.
		$$
		Thus for all $j, k>N$ we obtain the inequality
		$$
		\Bigl|\int_{\mathbb{R}^d}\psi\,d\mu_t^{n_j}-\int_{\mathbb{R}^d}\psi\,d\mu_t^{n_k}\Bigr|\le 4\varepsilon.
		$$
		According to Remark~\ref{r0}, the measures $\mu^{n_j}_t$ converge weakly to a probability measure $\mu_t$.
		Since for every $j$ the integral of $V$ with respect to the measure $\mu^{n_j}_t$ is majorized by $Re^{Mt}$,
        the same estimate holds for the integral of $V$ with respect to the measure $\mu_t$.
		Thus we obtain the family of probability measures $(\mu_t)_{t\in[0, T]}$. Note that
		for every $\psi\in C_0^{\infty}(\mathbb{R}^d)$ the estimate
		$$
		\Bigl|\int_{\mathbb{R}^d}\psi\,d\mu_t-\int_{\mathbb{R}^d}\psi\,d\mu_s\Bigr|\le
		\omega_{\psi}(|t-s|)
		$$
		holds for all $s, t\in[0, T]$. This estimate implies the continuity of the mapping
		$$
		t\mapsto\int_{\mathbb{R}^d}\psi\,d\mu_t.
		$$
		Therefore the mapping $t\mapsto\mu_t$ is continuous with respect to the weak topology. Set $\mu=\mu_t\,dt$.
		According to Remark~\ref{r2}, the measures $\mu^{n_j}$ converge $V$--weakly to the measure $\mu$.
		Finally, we note that for every bounded continuous function $\zeta$ on $\mathbb{R}^d\times[0, T]$
		one has the equality
		$$
		\lim_{j\to\infty}\int_{0}^{T}\int_{\mathbb{R}^d}\zeta(x, t)\,\mu_t^{n_j}(dx)\,dt=
		\int_{0}^{T}\int_{\mathbb{R}^d}\zeta(x, t)\,\mu_t(dx)\,dt.
		$$
		Thus the measures $\mu^{n_j}$ converge weakly to the measure $\mu$.
		
        Let us prove (ii).
        Suppose that measures $\mu^n=\mu^n_t\,dt\in\mathcal{M}_{R, M}(V)$
        converge weakly to a measure $\mu=\mu_t\,dt\in\mathcal{M}_{R, M}(V)$.
		It suffices to show that for every $t\in[0, T]$ the sequence $\mu^n_t$
        converges weakly to the measure~$\mu_t$. Note that every sequence of natural numbers $n_k$ posses a subsequence
		$n_{k_j}$ such that for every $t$ the measures $\mu^{n_{k_j}}_t$ converge weakly to a probability measure $\widetilde{\mu}_t$.
        In addition, the mapping $t\mapsto\widetilde{\mu}_t$ is continuous.
		Since $\mu_t\,dt=\widetilde{\mu}_t\,dt$, for every bounded and continuous function $\varphi$ the equality
        $$
        \int_{\mathbb{R}^d}\varphi\,d\mu_t=\int_{\mathbb{R}^d}\varphi\,d\widetilde{\mu}_t
        $$
        holds for almost all $t\in[0, T]$. The continuity of the mappings $t\mapsto\mu_t$ and $t\mapsto\widetilde{\mu}_t$ implies
		that the last equality is fulfilled for every $t\in[0, T]$. Therefor $\mu_t=\widetilde{\mu}_t$ for every $t\in[0, T]$.
		Thus for every $t$ the measures $\mu_t^n$ converge weakly to the measure $\mu_t$.
	\end{proof}

Let us state for future reference two additional remarks about the weak convergence and conditional measures.
		
	\begin{remark}\label{r4}\rm
		Assume that we are given complete separable metric spaces $X$ and $Y$.
        Let $P_n$ be a sequence of Borel probability measures on $X\times Y$ converging weakly to a
        probability measure $P$ on $X\times Y$.
		Suppose that $P_n(X\times B)=P(X\times B)=\pi(B)$ for every $n$. Then for every bounded Borel function
        $\eta$ on $X\times Y$ that is continuous in $x\in X$ we have the equality
		$$
		\lim_{n\to\infty}\int_{X\times Y}\eta(x, y)P_n(dxdy)=\int_{X\times Y}\eta(x, y)P(dxdy).
		$$
		This statement is known but for the reader's convenience
        we give a brief proof. We may assume that $|\eta|\le 1$.
		Let $\varepsilon>0$. Since the projections of measures $P_n$ on $X$ converge weakly to the projection
        of the measure $P$ on $X$,
		there exists a compact set $K\subset X$ such that the estimate $P_{n}(K\times Y)\ge 1-\varepsilon$
		holds for all~$n$.
		According to the Scorza Dragoni theorem (see, for instance, \cite[Theorem 2]{SkorcDr} or \cite[Theorem 7.14.26]{tm2Bogach}),
        there exists a compact set $C\subset Y$ such that $\pi(C)\ge 1-\varepsilon$ and
		the restriction of the function $\eta$ on $X\times C$ is continuous.
		Let $\widetilde{\eta}$ be a continuous function on $X\times Y$
        such that $\widetilde{\eta}=\eta$ on $K\times C$ and $|\widetilde{\eta}|\le 1$.
		We have
		$$
		\Bigl|\int_{X\times Y}\bigl(\eta-\widetilde{\eta}\bigr)\,dP_n\Bigr|\le
		2P_n(X\times Y\setminus K\times C)\le 2P_n((X\setminus K)\times Y)+2P_n(X\times(Y\setminus C))\le
		4\varepsilon.
		$$
		The analogous inequality holds for $P$. Since the measures $P_n$ converge weakly to $P$, there exists a number $N$
        such that for all $n>N$ the inequality
        $$
        \Big|\int_{X\times Y}\widetilde{\eta}\,dP_n-\int_{X\times Y}\widetilde{\eta}\,dP\Big|\le\varepsilon
        $$
        holds. Thus for all $n>N$ we obtain
        $$
        \Big|\int_{X\times Y}\eta\,dP_n-\int_{X\times Y}\eta\,dP\Big|\le 9\varepsilon.
        $$
        This concludes the proof.

	    Let us consider an example. Let $R>0$ and $M\ge 0$. Suppose that a Borel function $v(x, t)$ is continuous in $x$,
		for some number $C>0$ the estimate $|v(x, t)|\le C+CW(x)$ holds for all $(x, t)\in\mathbb{R}^d\times[0, T]$
        and measures $\mu_t^n\,dt\in\mathcal{M}_{R, M}(V)$ converge weakly to a measure $\mu_t\,dt\in\mathcal{M}_{R, M}(V)$.
		Arguing as in Remark~\ref{r2} and using the last assertion, we obtain
		$$
		\lim_{n\to\infty}\int_0^T\int_{\mathbb{R}^d}v(x, t)\mu_t^n(dx)\,dt=\int_0^T\int_{\mathbb{R}^d}v(x, t)\mu_t(dx)\,dt.
		$$
	\end{remark}

	\begin{remark}\label{r5}\rm
		Suppose that we are given complete separable metric spaces $X$ and $Y$.
        Let $\mu$ be a bounded Borel nonnegative measure on $X\times Y$. Denote by
		$\mu_Y$ the projection of $\mu$ on $Y$ that is $\mu_Y(E)=\mu(X\times E)$ for every Borel set $E$.
		By \cite[Theorem 10.4.10]{tm2Bogach} there exists a family of Borel probability measures $\mu^{y}$
        on $X$ such that for every Borel set $E$
		the mapping $y\mapsto \mu^y(E)$ is Borel measurable and for every Borel function $f\in L^1(\mu)$ the equality
		$$
		\int_{X\times Y}f(x, y)\mu(dxdy)=\int_Y\Bigl(\int_Xf(x, y)\mu^y(dx)\Bigr)\mu_Y(dy).
		$$
		holds. Recall that the measures $\mu^y$ are called conditional measures.
		
		Assume that we are given a family of Borel probability measures $(\sigma^y)_{y\in Y}$ on $X$.
		The mapping $y\mapsto\sigma^y(E)$ is Borel measurable for every Borel set $E$ if and only if the mapping
		$$
		y\mapsto \int_X\varphi(x)\sigma^{y}(dx)
		$$
		is Borel measurable for every bounded continuous function $\varphi$ on $X$.
        In this case we say that the family $(\sigma^y)_{y\in Y}$ is Borel measurable in $y$.

        Let us consider a Borel measurable family $(\sigma^y)_{y\in Y}$ on $X$ and a nonnegative bounded Borel measure
        $\eta$ on $Y$. According to \cite[Theorem 10.7.2]{tm2Bogach}, we can define the bounded nonnegative Borel measure $\mu$ by the equality
        $\mu(dxdy)=\sigma^{y}(dx)\eta(dy)$ which means that for every Borel set~$C$
		$$
		\mu(C)=\int_{X}\Bigl(\int_Y I_C(x, y)\sigma^y(dx)\Bigr)\eta(dy),
		$$
		where $I_C$ is the indicator of the set $C$.
		Note that the measures $\sigma^y$ are conditional measures for the measure $\mu$.

        Let us consider an example.
		Assume that the measure $\mu$ on $\mathbb{R}^d\times[0, T]$ is given by a family
		of Borel probability measures $\mu_t$ on $\mathbb{R}^d$ and the mapping
		$t\mapsto \mu_t$ is continuous with respect to weak topology.
        Suppose that we are given a Borel function $(x, t)\mapsto u(x, t)$ from~$\mathbb{R}^d\times[0, T]$~to~$U$.
		Then we can define the measure
		$$
		\Pi(dudxdt)=\delta_{u(x, t)}(du)\mu_t(dx)\,dt
		$$
        on the space $U\times\mathbb{R}^d\times[0, T]$.
		For every $t\in[0, T]$ the measure $\delta_{u(x, t)}(du)\mu_t(dx)$ is well defined since the function
		$x\mapsto \delta_{u(x, t)}(E)=I_E(u(x, t))$
        is Borel measurable for every Borel set $E\subset U$.
		Let us prove that the family of measures $\bigl(\delta_{u(x, t)}(du)\mu_t(dx)\bigr)_{t\in[0, T]}$ is Borel measurable in~$t$.
        Note that for every Borel set $C\subset U\times\mathbb{R}^d$ we have the equality
        $$
        \int_{U\times\mathbb{R}^d}I_C(u, x)\delta_{u(x, t)}(du)\mu_t(dx)=\int_{\mathbb{R}^d}I_C(u(x, t), x)\mu_t(dx),
        $$
        where $I_C$ is the indicator of $C$. According to \cite[Theorem 10.7.2]{tm2Bogach},
        the mapping
        $$
        t\mapsto \int_{\mathbb{R}^d}I_C(u(x, t), x)\mu_t(dx)
        $$
        is Borel measurable.
        Thus the measure $\Pi$ is well defined.
		Conditional measures are discussed in \cite[Chapter 10]{tm2Bogach}.
	\end{remark}
	
	Let
    $$
    R>0, \quad M=5C_L, \quad \gamma=\frac{1}{4}e^{-C_LT},
    $$
    where $C_L$ is a constant from the condition (H2.1).
    Let $P_{R}$ denote the set of all bounded nonnegative Borel measures
	$\Pi$ on $U\times\mathbb{R}^d\times[0, T]$ satisfying the conditions:

    1) $\Pi(U\times\mathbb{R}^d\times[0, T])=T$,

    2) the inequality
	$$
	\int_{U\times\mathbb{R}^d\times[0, t]} V(x)\,\Pi(dudxds)\le Re^{Mt}
    $$
    holds for all $t\in[0, T]$,

    3) the estimate
    $$
	\int_{U\times\mathbb{R}^d\times[0, T]}h(|u|)\,\Pi(dudxdt)\le \gamma R
	$$
    is fulfilled.

	If $V(0)T\le R$ and $h(|u_0|)T\le \gamma R$ for some $u_0\in U$,
	then the measure
	$$\Pi(dudxdt)=\delta_{u_0}(du)\otimes\delta_0(dx)\,dt$$
	belongs to the set $P_{R}$.
	Note that the set $P_{R}$ is compact in the weak topology (see Remark~\ref{r0}).
	
	Let $\sigma\in\mathcal{M}_{R, M}(V)$. Assume that $\nu$ is a Borel probability
    measure on $\mathbb{R}^d$ such that $V\in L^1(\nu)$.
	Denote by $S_{R}(\sigma)$ the set of measures $\Pi\in P_R$ such that
	the projection of $\Pi$ on $(x, t)$ is a measure $\mu=\mu_t\,dt\in \mathcal{M}_{R, M}(V)$ satisfying the conditions:
    1) $\mu_0=\nu$ and 2) the equality	
    $$
	\int_{\mathbb{R}^d}\psi\,d\mu_t=\int_{\mathbb{R}^d}\psi\,d\nu+\int_{U\times\mathbb{R}^d\times[0, t]}L_{\sigma, u}\psi\,d\Pi
	$$
	holds for all $t\in[0, T]$ and every function $\psi\in C_0^{\infty}(\mathbb{R}^d)$.
	
	\begin{remark}\label{r7}\rm
		Suppose that $\Pi\in S_{R}(\sigma)$ and $\mu=\mu_t\,dt$ is a projection of $\Pi$ on $(x, t)$.
		Denote by $\Pi_{x, t}(du)$ the conditional measures for $\Pi$ with respect to the projection~$\mu$.
		Since $\Pi\in S_{R}(\sigma)$, the functions $h(|u|)$ and $|u|$ (by the condition $\lim_{v\to\infty}h(v)/v=\infty$)
        are integrable with respect to the measure $\Pi$. Recall that $U$ is a convex closed set. Hence
        there exists a Borel function $(x, t)\mapsto u(x, t)$ from $\mathbb{R}^d\times[0, T]$ to $U$ such that
		for $\mu$ -- almost all $(x, t)$ the equality
		$$
		u(x, t)=\int_{U}u\,\Pi_{x, t}(du)
		$$
        holds.
		If $T=1$, then $\Pi$ is a probability measure and the function $(x, t)\mapsto u(x, t)$ is
        the conditional expectation of $\xi(u)=u$ with respect to the measure $\Pi(dudxdt)$
        and the sigma---algebra generating by the variables $x$ and~$t$.
		
		Applying Jensen's inequality, we obtain
		$$
		\int_0^T\int_{\mathbb{R}^d}h(|u(x, t)|)\mu_t(dx)\,dt\le
		\int_0^T\int_{\mathbb{R}^d}\int_Uh(|u|)\Pi_{x, t}(du)\mu_t(dx)\,dt\le\gamma R.
		$$
		Moreover, for every function $\psi\in C_0^{\infty}(\mathbb{R}^d)$ the equality
		$$
		\int_{U\times\mathbb{R}^d\times[0, t]}\langle Q(x, t, \sigma)u, \nabla\psi(x)\rangle\Pi(dudxdt)=
		\int_0^t\int_{\mathbb{R}^d}\langle Q(x, t, \sigma)u(x, t), \nabla\psi(x)\rangle\mu_t(dx)\,dt
		$$
        holds for all $t\in[0, T]$. Therefor the measure $\mu=\mu_t\,dt$ is a solution to the Cauchy problem
        $\partial_t\mu_t=L_{\sigma, u(x, t)}^{*}\mu_t$, $\mu_0=\nu$.
	\end{remark}
	
	The following two lemmas play a crucial role in the proof of the main results.
	
	\begin{lemma}\label{lem3}
		Suppose that the conditions {\rm (H1), (H2), (H3)} are fulfilled.
		Then there exists a number $R_0>0$ such that
		for every $R>R_0$ we have
		
		{\rm (i)} for every
		$\sigma\in\mathcal{M}_{R, M}(V)$ the set $S_{R}(\sigma)$ is nonempty,
		
		{\rm (ii)} there exists $\omega=\{\omega_{\psi}\}$ such that for every $\Pi\in S_R(\sigma)$
		the projection of $\Pi$ on $(x, t)$
		belongs to the set $\mathcal{M}_{R, M}^{\omega}(V)$,
		
		{\rm (iii)}
		the graph of the mapping $\sigma\mapsto S_{R}(\sigma)$ is closed in the space
		$\mathcal{M}_{R, M}^{\omega}(V)\times P_{R}$, in particular, the set $S_{R}(\sigma)$ is compact in the weak topology.
	\end{lemma}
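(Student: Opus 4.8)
The plan is to treat (i)--(iii) in turn; the substance lies in (iii), a limit passage in the linear Fokker--Planck--Kolmogorov identity along a varying frozen measure $\sigma^n\to\sigma$ and a varying $\Pi^n\to\Pi$, with the control $u$ unbounded. For (i), fix any $u_0\in U$ and take the constant control $u(x,t)\equiv u_0$. Since $\sigma\in\mathcal{M}_{R,M}(V)\subset\mathcal{M}_{Re^{MT}}(V)$, the coefficients $a^{ij}(\cdot,\cdot,\sigma)$ and $b^i(\cdot,\cdot,\sigma)+q^{im}(\cdot,\cdot,\sigma)u_0^m$ of $\partial_t\mu_t=L^{*}_{\sigma,u_0}\mu_t$ are continuous in $x$ and bounded on $B\times[0,T]$ for every ball $B$ by (H1.1)--(H1.2), so the existence and regularity results recalled in Remark~\ref{r3} yield a weakly continuous curve $t\mapsto\mu_t$ of probability measures with $\mu_0=\nu$, satisfying the integral identity for all $t$ and $\sup_t\int_{\mathbb{R}^d}V\,d\mu_t<\infty$, i.e. $\mu=\mu_t\,dt\in\mathcal{M}(V)$. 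By (H2.1) and $\langle Q(x,t,\sigma)u_0,\nabla V\rangle\le h^{*}(|Q^{\top}\nabla V|)+h(|u_0|)$ one has $L_{\sigma,u_0}V\le h(|u_0|)+C_LV+C_L\int_{\mathbb{R}^d}V\,d\sigma_t+C_L\sup_t\int_{\mathbb{R}^d}W\,d\sigma_t$, and the last two terms are bounded in terms of $R$ alone (via $\int V\,d\sigma_t\le Re^{MT}$ and Remark~\ref{r1}, which gives $\sup_t\int W\,d\sigma_t\le\beta_{V,W}(Re^{MT})Re^{MT}=o(R)$). Inserting this into the Lyapunov estimate of Remark~\ref{r3} with $M=5C_L$ (so $C_L/(M-C_L)=1/4$) gives $\int_{\mathbb{R}^d}V\,d\mu_t\le Re^{Mt}$ for all $R$ large, while $\int_0^{T}\int_{\mathbb{R}^d}h(|u(x,t)|)\,d\mu_t\,dt=h(|u_0|)T\le\gamma R$ for $R$ large; hence $\Pi=\delta_{u_0}(du)\,\mu_t(dx)\,dt$, well defined by Remark~\ref{r5}, satisfies all the defining conditions of $S_R(\sigma)$.

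For (ii), let $\Pi\in S_R(\sigma)$ have $(x,t)$-projection $\mu=\mu_t\,dt$; the integral bound and $\mu_0=\nu$ are part of the definition, so only the equicontinuity needs proof. From the identity defining $S_R(\sigma)$, for $\psi\in C_0^{\infty}(\mathbb{R}^d)$,
$$
\int_{\mathbb{R}^d}\psi\,d\mu_t-\int_{\mathbb{R}^d}\psi\,d\mu_s=\int_{U\times\mathbb{R}^d\times[s,t]}L_{\sigma,u}\psi\,d\Pi .
$$
With $\operatorname{supp}\psi\subset B$, (H1.1) (applied with $Re^{MT}$) gives $|L_{\sigma,u}\psi(x,\tau)|\le K_B\|\psi\|_{C^2}(1+|u|)I_B(x)$ with $K_B$ independent of $\sigma$ and $\Pi$; splitting the last integral over $\{|u|\le N\}$ and $\{|u|>N\}$, using $|u|\le(N/h(N))h(|u|)$ on the latter and $\int h(|u|)\,d\Pi\le\gamma R$ (from $\Pi\in P_R$), one obtains
$$
\Bigl|\int_{\mathbb{R}^d}\psi\,d\mu_t-\int_{\mathbb{R}^d}\psi\,d\mu_s\Bigr|\le K_B\|\psi\|_{C^2}\Bigl((1+N)|t-s|+\tfrac{N}{h(N)}\gamma R\Bigr)\quad\text{for every }N .
$$
Optimising over $N$ (legitimate since $h(v)/v\to\infty$) produces a continuous nondecreasing $\omega_\psi$ with $\omega_\psi(0)=0$ depending only on $\psi$ and $R$; thus $\mu\in\mathcal{M}_{R,M}^{\omega}(V)$.

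For (iii), take $\sigma^n\to\sigma$ in $\mathcal{M}_{R,M}^{\omega}(V)$ and $\Pi^n\in S_R(\sigma^n)$ with $\Pi^n\to\Pi$ in $P_R$; the limits lie in these compact sets by Lemma~\ref{lem2}(i). By Lemma~\ref{lem2}(ii) the convergence $\sigma^n\to\sigma$ is $V$--weak, and since $\sigma^n,\sigma\in\mathcal{M}_{Re^{MT}}(V)$, (H1.3) applies to it. By (ii) the projections $\mu^n,\mu$ lie in $\mathcal{M}_{R,M}^{\omega}(V)$, converge weakly (hence $V$--weakly), and $\mu_0=\nu$. It remains to pass to the limit in
$$
\int_{\mathbb{R}^d}\psi\,d\mu_t^n=\int_{\mathbb{R}^d}\psi\,d\nu+\int_{\mathbb{R}^d\times[0,t]}L_{\sigma^n}\psi\,d\mu^n+\int_{U\times\mathbb{R}^d\times[0,t]}\langle Q(x,\tau,\sigma^n)u,\nabla\psi(x)\rangle\,d\Pi^n ,
$$
where the middle term uses that $L_{\sigma^n}\psi$ is $u$-free. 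For it, $L_{\sigma^n}\psi$ is bounded Borel in $(x,\tau)$, supported in $B$, continuous in $x$, and by (H1.3) $\sup_{x\in B}|L_{\sigma^n}\psi-L_\sigma\psi|(\cdot,\tau)\to0$ for each $\tau$; dominated convergence absorbs the change of integrand, and Remark~\ref{r4} the change of measure, since $\mu^n$ and $\mu$ share the $[0,T]$-marginal (Lebesgue) and $I_{[0,t]}(\tau)L_\sigma\psi(x,\tau)$ is bounded Borel, continuous in $x$. For the last term I would first use $\int h(|u|)\,d\Pi^n\le\gamma R$ and $\sup_{x\in B}\|Q(x,\tau,\sigma^n)\|\le K_B$ to make the $\{|u|>N\}$ part uniformly small in $n$, then insert a continuous cutoff $\chi_N(u)$ on the complement (the integrand becomes bounded, continuous in $(u,x)$, Borel in $\tau$), apply (H1.3) --- uniform convergence of $Q$ on $B$ with $|u|\le N+1$ --- for the change of $\sigma^n$, and Remark~\ref{r4} (again the $t$-marginals are Lebesgue) for the change of $\Pi^n$; letting $N\to\infty$ finishes the passage. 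Hence the identity holds for $(\sigma,\Pi)$, i.e. $\Pi\in S_R(\sigma)$, so the graph is closed; being a closed subset of the compact $P_R$, each $S_R(\sigma)$ is compact.

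The step I expect to be the main obstacle is precisely the limit in $\langle Q(x,\tau,\sigma^n)u,\nabla\psi(x)\rangle$ in (iii): weak convergence of $\Pi^n$ alone is not enough, since the integrand is unbounded in $u$, only Borel in $\tau$, and $Q$ moves with $\sigma^n$. The three devices that resolve it are the uniform $h(|u|)$-bound forced by $\Pi^n\in P_R$ (to truncate $u$), the local sup-bounds and local uniform continuity from (H1.1)/(H1.3) (for the $\sigma$-dependence), and Remark~\ref{r4} (for the Borel-in-$\tau$ dependence, using that every measure in play has Lebesgue $t$-marginal).
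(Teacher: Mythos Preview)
Your argument is correct and follows essentially the same route as the paper's proof: (i) via the constant control $u_0$ and the Lyapunov estimate of Remark~\ref{r3}, (ii) via the $h(|u|)$-bound on $\Pi$ to produce a common modulus $\omega_\psi$, and (iii) via Lemma~\ref{lem2}, (H1.3), the $h$-truncation of $u$, and Remark~\ref{r4}. The only notable deviation is in (ii): the paper first passes to the barycenter $u(x,t)=\int_U u\,\Pi_{x,t}(du)$ (Remark~\ref{r7}) and then applies Jensen's inequality to get the explicit modulus $\omega_\psi(v)=Cv+Cv\,h^{-1}(\gamma R/v)$, whereas you work directly with $\Pi$ and optimise over the truncation level $N$; both yield a valid $\omega_\psi$ depending only on $\psi$ and $R$. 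One small wording issue in (iii): you invoke (ii) to place $\mu$ in $\mathcal{M}_{R,M}^{\omega}(V)$, but (ii) applies only once $\Pi\in S_R(\sigma)$ is known; the correct justification (which the paper uses and you implicitly have) is that $\mu^n\in\mathcal{M}_{R,M}^{\omega}(V)$ by (ii), and the limit $\mu$ stays there because $\mathcal{M}_{R,M}^{\omega}(V)$ is compact (Lemma~\ref{lem2}(i)).
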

	\begin{proof}
		Let us prove (i). Let $\sigma\in\mathcal{M}_{R, M}(V)$ and $u_0\in U$. According to the conditions (H1.1) and (H1.2),
		the coefficients $a^{ij}(x, t, \sigma)$ and $b^i(x, t, \sigma)$ are locally bounded and continuous in~$x$.
		By the condition (H2.1) we have the estimate
		$$
		L_{\sigma, u_0}V(x)\le C_LV(x)+h(|u_0|)+C_LRe^{Mt}+C_L\beta_{V, W}(Re^{MT})Re^{MT},
		$$
        where $\beta_{V, W}$ is defined in Remark~\ref{r1}.
		According to Remark~\ref{r3}, there exists a continuous mapping $t\mapsto\mu_t$
		such that the measure $\mu=\mu_t\,dt$ is a solution to the Cauchy problem
		$$
		\partial_t\mu_t=L_{\sigma, u_0}^{*}\mu_t, \quad \mu_0=\nu.
		$$
        Applying the estimates from Remark~\ref{r3} with $C=C_L$, $\mathcal{V}=V$ and
        $$
        \mathcal{W}(x, t)=h(|u_0|)+C_LRe^{Mt}+C_L\beta_{V, W}(Re^{MT})Re^{MT},
        $$
		we obtain the inequality
		$$
		\int_{\mathbb{R}^d}V(x)\,\mu_t(dx)\le e^{C_LT}\Bigl(\|V\|_{L^1(\nu)}+h(|u_0|)T
+C_L T\beta_{V, W}(Re^{MT})Re^{MT}\Bigr)+\frac{1}{4}Re^{Mt}.
		$$
        Here we also use the equality $M=5C_L$.
		There exists a number $R_0>0$ such that for all $R>R_0$ the estimates
		$$
		h(|u_0|)T\le \gamma R,
\quad e^{C_LT}\Bigl(\|V\|_{L^1(\nu)}+h(|u_0|)T+C_L T\beta_{V, W}(Re^{MT})Re^{MT}\Bigr)\le \frac{3R}{4}
		$$
        are fulfilled. Hence for every $R>R_0$ we have
        $$
        \int_0^T\int_{\mathbb{R}^d}h(|u_0|)\mu_t(dx)\,dt\le\gamma R, \quad \int_{\mathbb{R}^d}V(x)\mu_t(dx)\le Re^{Mt}.
        $$
        It follows that the measure
		$$\Pi(dudxdt)=\delta_{u_0}(du)\otimes\mu_t(dx)\,dt$$
		belongs to the set $S_{R}(\sigma)$.
		
		Let us prove (ii).
		Assume that $\Pi\in S_{R}(\sigma)$ and $\mu=\mu_t\,dt$ is a projection of the measure $\Pi$ on $(x, t)$.
		Denote by $\Pi_{x, t}$ the conditional measures for $\Pi$ with respect to the measure~$\mu$.
		As in Remark~\ref{r7} let us consider the Borel function $u\colon\mathbb{R}^d\times[0, T]\to U$
        such that for $\mu$ -- almost all~$(x, t)$ the inequality
		$$
		u(x, t)=\int_{U}u\,\Pi_{x, t}(du)
		$$
        holds. According to Remark~\ref{r7}, the measure $\mu=\mu_t\,dt$ is a solution to the Cauchy problem
		$\partial_t\mu_t=L_{\sigma, u(x, t)}^{*}\mu_t$, $\mu_0=\nu$.
		Hence for every function $\psi\in C^{\infty}_0(\mathbb{R}^d)$ and all $0\le s<t\le T$ one has
        the equality
		$$
		\int_{\mathbb{R}^d}\psi\,d\mu_t-\int_{\mathbb{R}^d}\psi\,d\mu_s=
		\int_s^t\int_{\mathbb{R}^d}L_{\sigma, u(x, \tau)}\psi(x, \tau)\,\mu_{\tau}(dx)\,d\tau.
		$$
		Let the support of $\psi$ be in some open ball $B$.
		According to the condition (H1.1) we have
        $$
        C(B)=\sup_{x\in B, t\in[0, T], \theta\in\mathcal{M}_R(V)}
	\Bigl(\|A(x, t, \mu)\|+|b(x, t, \theta)|+\|Q(x, t, \theta)\|\Bigr)<\infty.
        $$
        Set
        $$
        C=C(B)\sup_{x}\Bigl(\|D^2\psi(x)\|+|\nabla\psi(x)|\Bigr).
        $$
        We have the estimate
		$$
		\Bigl|\int_{\mathbb{R}^d}\psi\,d\mu_t-\int_{\mathbb{R}^d}\psi\,d\mu_s\Bigr|\le
		C|t-s|+C\int_s^t\int_{\mathbb{R}^d}|u(x, \tau)|\,d\mu_{\tau}(dx)\,d\tau.
		$$
		Using Jensen's inequality, we obtain
		$$
		h\Bigl(\frac{1}{|t-s|}\int_s^t\int_{\mathbb{R}^d}|u(x, \tau)|\,d\mu_{\tau}(dx)\,d\tau\Bigr)\le
		\frac{1}{|t-s|}\int_s^t\int_{\mathbb{R}^d}h(|u(x, \tau)|)\,d\mu_{\tau}(dx)\,d\tau.
		$$
		Hence we arrive at the estimate
		$$
		\int_s^t\int_{\mathbb{R}^d}|u(x, \tau)|\,d\mu_{\tau}(dx)\,d\tau\le |t-s|h^{-1}\Bigl(\frac{\gamma R}{|t-s|}\Bigr),
		$$
        where $h^{-1}$ is the inverse function.
		Thus we obtain
		$$
		\Bigl|\int_{\mathbb{R}^d}\psi\,d\mu_t-\int_{\mathbb{R}^d}\psi\,d\mu_s\Bigr|\le
		C|t-s|+C|t-s|h^{-1}\Bigl(\frac{\gamma R}{|t-s|}\Bigr).
		$$
		Since $\lim_{v\to+\infty}h(v)/v=+\infty$, we have $\lim_{v\to 0}vh^{-1}(1/v)=0$.
		Set
		$$\omega_{\psi}(v)=Cv+Cvh^{-1}\Bigl(\frac{\gamma R}{v}\Bigr).$$
		Therefor $\mu\in\mathcal{M}_{R, M}^{\omega}(V)$, where $\omega=\{\omega_{\psi}\}$.
		
	    Let us prove (iii). Assume that $\sigma^n\in\mathcal{M}_{R, M}^{\omega}(V)$,
		$\Pi^n\in S_{R}(\sigma^n)$, the measures $\sigma^n$ converge weakly to a measure $\sigma$ and
		the measures $\Pi^n$ converge weakly to a measure $\Pi$. By Lemma~\ref{lem2} the measure $\sigma$
		belongs to the set $\mathcal{M}_{R, M}^{\omega}(V)$. Note that the projections
		$\mu^n=\mu^n_t\,dt$ of the measures $\Pi^n$ on $(x, t)$ converge weakly to the projection $\mu$ of the measure $\Pi$ on $(x, t)$.
		By Lemma~\ref{lem2} the measure $\mu$ has the form $\mu=\mu_t\,dt$ and belongs to the set $\mathcal{M}_{R, M}^{\omega}(V)$.
		Furthermore, the measures
		$\mu^n$ converge $V$--weakly to the measure~$\mu$. Note that the function $h(|u|)$ is continuous on $U$,
		the measures $\Pi^n$ converge weakly to the measure $\Pi$ and for all $n$ the estimate
		$$
		\int_{U\times\mathbb{R}^d\times[0, T]}h(|u|)\,\Pi^n(dudxdt)\le \gamma R
		$$
		holds. Then this estimate is fulfilled for~$\Pi$ (see Remark~\ref{r0}).
        Thus it suffices to prove that the equality
		$$
		\int_{\mathbb{R}^d}\psi\,d\mu_t=\int_{\mathbb{R}^d}\psi\,d\nu+\int_{U\times\mathbb{R}^d\times[0, t]}L_{\sigma, u}\psi\,d\Pi
		$$
        is fulfilled for every function $\psi\in C_0^{\infty}(\mathbb{R}^d)$ and all $t\in[0, T]$.
		Let us remark that for every $\psi\in C_0^{\infty}(\mathbb{R}^d)$, for all $t\in[0, T]$ and every $n$
		we have
		$$
		\int_{\mathbb{R}^d}\psi\,d\mu_t^n-\int_{\mathbb{R}^d}\psi\,d\nu=
		\int_{U\times\mathbb{R}^d\times[0, t]}L_{\sigma^n, u}\psi\,d\Pi^n.
		$$
        Let the support of $\psi$ be in some open ball $B$.
		Set
		$$
		C_n(t)=\sup_{x\in B}
		\Bigl(\|A(x, t, \sigma^n)-A(x, t, \sigma)\|+|b(x, t, \sigma^n)-b(x, t, \sigma)|+\|Q(x, t, \sigma^n)-Q(x, t, \sigma)\|\Bigr).
		$$
		According to the condition (H1.3), the equality $\lim_{n\to\infty}C_n(t)=0$ holds for all $t\in[0, T]$.
        By the condition (H1.1) we have $\widetilde{C}=\sup_{n, t}C_n(t)<\infty$.
		Hence we derive the estimate
		$$
		\int_{U\times\mathbb{R}^d\times[0, t]}\Bigl|L_{\sigma^n, u}\psi-L_{\sigma, u}\psi\Bigr|\,d\Pi^n\le
		\sup_x\Bigl(|\nabla\psi(x)|+\|D^2\psi(x)\|\Bigr)\int_{U\times\mathbb{R}^d\times[0, t]}C_n(s)(1+|u|)\,d\Pi^n.
		$$
        Since for every number $N\ge 1$ the estimate
        $$
        \int_{U\times\mathbb{R}^d\times[0, t]}C_n(s)(1+|u|)\,d\Pi^n\le (1+N)\int_0^tC_n(t)\,dt+\gamma R\widetilde{C}\sup_{v>N}\frac{v}{h(v)}.
        $$
        is fulfilled and $\displaystyle\lim_{N\to\infty}\sup_{v>N}\frac{v}{h(v)}=0$, we obtain
        $$
        \lim_{n\to\infty}\int_{U\times\mathbb{R}^d\times[0, t]}\Bigl|L_{\sigma^n, u}\psi-L_{\sigma, u}\psi\Bigr|\,d\Pi^n=0.
        $$
		Thus it suffices to verify the equality
		$$
		\lim_{n\to\infty}\int_{U\times\mathbb{R}^d\times[0, t]}L_{\sigma, u}\psi\,d\Pi^n=
		\int_{U\times\mathbb{R}^d\times[0, t]}L_{\sigma, u}\psi\,d\Pi.
		$$
		Note that the function $L_{\sigma, 0}\psi(x, t)$ is bounded and continuous in~$x$.
        Applying Remark~\ref{r4} we obtain
		$$
		\lim_{n\to\infty}\int_0^t\int_{\mathbb{R}^d}L_{\sigma, 0}\psi\,d\mu_t^n\,dt=
		\int_0^t\int_{\mathbb{R}^d}L_{\sigma, 0}\psi\,d\mu_t\,dt.
		$$
		Finally, we need to pass to the limit in the integral of the function
		$\langle Q(x, t, \sigma)u, \nabla\psi(x)\rangle$ with respect to~$\Pi^n$.
		Let $\zeta_N\in C(\mathbb{R}^{d_1})$, $0\le \zeta_N\le 1$, $\zeta_N(u)=1$ if $|u|<N$ and $\zeta_N(u)=0$ if $|u|>2N$.
        Set $u^N=u\zeta_N(u)$.
        Using the inequality $|u-u^N|\le 2|u|I_{|u|>N}$, we obtain
		the estimate
		\begin{multline*}
			\Bigl|\int_{U\times\mathbb{R}^d\times[0, t]}
			\langle Q(x, t, \sigma)(u-u^N), \nabla\psi(x)\rangle\,d\Pi^n\Bigr|\le
			\\
			2\sup_{x, t}\Bigl(\|Q(x, t, \sigma)\||\nabla\psi(x)|\Bigr)\int_{|u|>N, u\in U}|u|\,d\Pi^n\le
			2\gamma R\sup_{x, t}\Bigl(\|Q(x, t, \sigma)\||\nabla\psi(x)|\Bigr)\sup_{v>N}\frac{v}{h(v)}.
		\end{multline*}
		The similar estimate holds for the measure $\Pi$.
		Since the sequence
		$$
		2\gamma R\sup_{x, t}\Bigl(\|Q(x, t, \sigma)\||\nabla\psi(x)|\Bigr)\sup_{v>N}\frac{v}{h(v)}
		$$
		tends to zero as $N\to\infty$, it suffices to pass to the limit in the integral of
		the function $\langle Q(x, t, \sigma)u^N, \nabla\psi(x)\rangle$ with respect to the measure $\Pi^n$.
		Note that the function
        $$\langle Q(x, t, \sigma)u^N, \nabla\psi(x)\rangle$$
		is a bounded and continuous in $(x, u)$ and for every $n$ the projection of $\Pi^n$ on $t$ is Lebesgue measure on $[0, T]$.
		Using Remark~\ref{r4}, we obtain the equality
		\begin{multline*}
			\lim_{n\to\infty}\int_{U\times\mathbb{R}^d\times[0, t]}\langle Q(x, s, \sigma)u^N, \nabla\psi(x)\rangle\Pi^n(dudsdx)=
			\\
			\int_{U\times\mathbb{R}^d\times[0, t]}\langle Q(x, s, \sigma)u^N, \nabla\psi(x)\rangle\Pi(dudsdx).
		\end{multline*}
	\end{proof}

    Note that the function $\omega_{\psi}(v)=Cv+Cvh^{-1}(\gamma R/v)$ from the statement (ii) of Lemma~\ref{lem2}
    does not depend on $\sigma$.

	Below we use the following well-known results on the uniqueness of sub-probability solutions to the Cauchy problem for
    the Fokker--Planck--Kolmogorov equations.	
	
    \begin{remark}\label{r8}\rm
		As in Remark~\ref{r3} we consider the Cauchy problem
		$$
		\partial_t\mu_t=\mathcal{L}^{*}\mu_t, \quad \mu_0=\nu,
		$$
		where $\nu$ is a Borel probability measure on $\mathbb{R}^d$ and
		$$
		\mathcal{L}\psi(x, t)={\rm trace}\bigl(\alpha(x, t)D^2\psi(x)\bigr)+\langle\beta(x, t), \nabla\psi(x)\rangle.
		$$
		Here the matrix $\alpha=(\alpha^{ij})$ is symmetric and nonnegative definite and the functions
		$\alpha^{ij}$, $\beta^i$ are Borel measurable. Suppose that for every open ball
		$B\subset\mathbb{R}^d$ there exist numbers $\Lambda(B)>0$ and $\lambda(B)>0$ such that
		$$
		\alpha(x, t)\ge \lambda(B)I, \quad \|\alpha(x, t)-\alpha(y, t)\|\le \Lambda(B)|x-y| \quad x, y\in B, \, t\in[0, T].
		$$
		Assume also that $\sup_{x\in B, t\in[0, T]}|\beta(x, t)|<\infty$ for every open ball~$B$.
		Suppose that there exists a solution $\mu=\mu_t\,dt$ such that $\mu_t\ge 0$,
        $\mu_t(\mathbb{R}^d)\le 1$ for every $t\in[0, T]$ and
        $$\alpha^{ij}, \beta^i\in L^1(\mathbb{R}^d\times[0, T], \mu).$$
        Then, according to \cite[Theorem 9.4.3]{bookFPK}, the class of sub--probability solutions
        contains precisely one element~$\mu=\mu_t\,dt$.
	\end{remark}
	
In the following lemma we prove that for a given sequence $\sigma^n$ converging to $\sigma$ and some measure $\Pi\in S_R(\sigma)$
there exists a sequence of measures $\Pi^n\in S_R(\sigma^n)$ such that the measures $\Pi^n$ converge weakly to the measure $\Pi$.  	
This assertion plays a crucial role in the proof of Theorem~\ref{th1}. The main difficulty is that we need to solve
degenerate Fokker--Planck--Kolmogorov equations with irregular coefficients.
A similar problem arises in the proof of the Ambrosio--Figalli--Trevisan superposition principle \cite{Superp21}.
Therefor the part of the proof repeats the arguments from \cite{Superp21}.
We use the method of doubling variables described in \cite[Section 3.3]{LebRi}.
Note that this method is just an analytical form of a known method in stochastic differential equations
(see \cite[Lemma 8.1.3]{Strook}, \cite[Chapter 4]{Ikeda-Watanabe} or \cite{Chen}).
	
	\begin{lemma}\label{lem4}
		There exists a number $R_0$ such that for every $R>R_0$ the following statement is fulfilled.
		Suppose that $\sigma^n, \sigma\in\mathcal{M}_{R, M}(V)$
		and the measures $\sigma^n$ converge $V$--weakly to the measure~$\sigma$.
		Then for every measure $\Pi\in S_{R}(\sigma)$ of the form
		$$\Pi(dudxdt)=\delta_{u(x, t)}(du)\mu_t(dx)\,dt,$$
		where $(x, t)\mapsto u(x, t)$ is a Borel function from $\mathbb{R}^d\times[0, T]$ to $U$, there exists
		a sequence of measures $\Pi^n\in S_{R}(\sigma^n)$ such that for every $n$
		the projection of $\Pi^n$ on $(u, t)$ is equal to the projection of $\Pi$ on $(u, t)$
		and the measures $\Pi^n$ converge weakly to the measure $\Pi$.
	\end{lemma}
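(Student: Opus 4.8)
The plan is to realise the measures $\Pi^n$ through a \emph{doubling of variables}. I would look for a measure $\Theta^n=\Theta^n_t\,dt$ on $\mathbb{R}^d\times\mathbb{R}^d\times[0,T]$, with variables $(x,y,t)$, solving the Cauchy problem $\partial_t\Theta^n_t=(\mathcal{L}^n_t)^{*}\Theta^n_t$ with $\Theta^n_0$ the diagonal lift of $\nu$ (the image of $\nu$ under $x\mapsto(x,x)$), where the operator $\mathcal{L}^n_t$ acts as $L_{\sigma^n,u(y,t)}$ in the $x$-variables and as $L_{\sigma,u(y,t)}$ in the $y$-variables, with the two noises coupled, i.e. carrying on a test function $\varphi=\varphi(x,y)$ the extra cross second-order term $2\,\mathrm{trace}\big(\sqrt{A(x,t,\sigma^n)}\,\sqrt{A(y,t,\sigma)}\,D^2_{xy}\varphi\big)$. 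Thus the $y$-block carries the original data $(\sigma,u(y,\cdot))$, which forces the time marginal in $y$ of $\Theta^n$ to be $\mu=\mu_t\,dt$, while the $x$-block carries the new data $(\sigma^n,u(y,\cdot))$ --- the \emph{same} control function, now evaluated along the $y$-coordinate. Once $\Theta^n$ is at hand I put $\Pi^n:=\Theta^n\circ G^{-1}$ with $G(x,y,t)=(u(y,t),x,t)$.

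Granting such a $\Theta^n$, the verification that $\Pi^n$ does the job is routine. The projection of $\Pi^n$ onto $(u,t)$ is the image of the $y$-marginal $\mu$ of $\Theta^n$ under $(y,t)\mapsto(u(y,t),t)$, which is precisely the projection onto $(u,t)$ of $\Pi=\delta_{u(x,t)}(du)\mu_t(dx)\,dt$. The projection $\mu^n=\mu^n_t\,dt$ of $\Pi^n$ onto $(x,t)$ has $\mu^n_0=\nu$, and testing the doubled equation against $\psi=\psi(x)$ (the $y$-operator and the cross term annihilate such $\psi$) gives the identity $\int_{\mathbb{R}^d}\psi\,d\mu^n_t=\int_{\mathbb{R}^d}\psi\,d\nu+\int_{U\times\mathbb{R}^d\times[0,t]}L_{\sigma^n,u}\psi\,d\Pi^n$. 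The bound $\int h(|u|)\,d\Pi^n=\int_0^T\!\!\int_{\mathbb{R}^d}h(|u(y,t)|)\mu_t(dy)\,dt\le\gamma R$ is inherited from $\Pi\in S_R(\sigma)$, and $\int_{\mathbb{R}^d}V\,d\mu^n_t\le Re^{Mt}$ follows, once $R$ exceeds a suitable $R_0$, from Remark~\ref{r3} and (H2.1): since $\mathcal{L}^n_tV(x)\le C_LV(x)+C_L\int V\,d\sigma^n_t+C_L\sup_{t}\int W\,d\sigma^n_t+h(|u(y,t)|)$, integrating against $\Theta^n_t$ and using $\sigma^n\in\mathcal{M}_{R,M}(V)$, Remark~\ref{r1}, $M=5C_L$ and $\gamma=\tfrac14 e^{-C_LT}$, one argues verbatim as in Lemma~\ref{lem1}. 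Hence $\Pi^n\in S_R(\sigma^n)$ with the required $(u,t)$-projection.

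The substantial point is the existence of $\Theta^n$ whose $y$-marginal equals the prescribed $\mu$. Since $A$ may be degenerate and the coefficients are only locally bounded (the control drift being merely $h$-integrable against $\mu$), uniqueness for the $y$-equation on its own is unavailable, so one cannot just take an arbitrary solution of the doubled equation. Here I would follow \cite{Superp21}: mollify the coefficients in $x$, replace $A(\cdot,t,\cdot)$ by $A(\cdot,t,\cdot)+\delta I$, solve the resulting regular non-degenerate doubled problem while keeping the noises coupled and the $y$-marginal anchored to $\mu$, derive $\delta$-uniform a priori bounds (the $V$-estimate above and the $h$-estimate), and pass to the limit $\delta\to0$ by means of the compactness of Lemma~\ref{lem2} (tightness from the $V$-bounds, equicontinuity in $t$ from the Lyapunov bound, as in Lemma~\ref{lem3}(ii)). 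The mechanism that makes this work --- and where condition (H2.2) is used decisively --- is that for $\varphi=\varphi(x-y)$ the cross term merges with the two diagonal second-order terms into $\mathrm{trace}\big((\sqrt{A(x,t,\sigma^n)}-\sqrt{A(y,t,\sigma)})^2 D^2\varphi\big)$, and $\mathrm{trace}\big((\sqrt{A(x,t,\mu)}-\sqrt{A(y,t,\mu)})^2\big)$ is exactly the quantity estimated in (H2.2), together with the monotonicity of $b$ and the weighted Lipschitz bound on $Q$ there (controlled in turn, through $\Theta$, by (H2.3)); this is the analytic form of the coupling argument for stochastic differential equations in \cite[Chapter 4]{Ikeda-Watanabe}. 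Rendering the regularisation compatible with the given, possibly degenerate, solution $\mu$ on the $y$-marginal is the step I expect to be the main obstacle.

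To finish with $\Pi^n\to\Pi$ I would show that $\Theta^n$ concentrates on the diagonal. Take $\varphi(x,y)=\chi(|x-y|^2)$ with $\chi$ bounded, increasing, concave and $\chi(s)\sim s$ as $s\to0$, and run a Gr\"onwall estimate for $t\mapsto\int\varphi\,d\Theta^n_t$. By the identity above the second-order contribution is governed by $\mathrm{trace}\big((\sqrt{A(x,t,\sigma^n)}-\sqrt{A(y,t,\sigma)})^2\big)$; isolating $\sqrt{A(\cdot,t,\sigma^n)}-\sqrt{A(\cdot,t,\sigma)}$ and invoking (H1.3) on balls (with the $V$-bounds for the complement) shows that this part is $o(1)$ as $n\to\infty$, while $\mathrm{trace}\big((\sqrt{A(x,t,\sigma)}-\sqrt{A(y,t,\sigma)})^2\big)$ and the drift terms are absorbed by (H2.2)--(H2.3) against the finite measure $(1+V(x)+V(y))\,\Theta^n_t$, the truncation built into $\chi$ taming the contributions carrying $h(|u(y,t)|)$ (whose total mass is $\le\gamma R$). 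Since $\Theta^n_0$ sits on the diagonal, one gets $\sup_{t}\int\chi(|x-y|^2)\,d\Theta^n_t\to0$. Combining this with tightness (from the uniform $V$-bounds) and the fact that every $\Theta^n$ has the common $y$-marginal $\mu$, for a bounded continuous $g$ one has
$$
\int g\,d\Pi^n-\int g\,d\Pi=\int_0^T\!\!\int_{\mathbb{R}^d\times\mathbb{R}^d}\big(g(u(y,t),x,t)-g(u(y,t),y,t)\big)\,\Theta^n_t(dxdy)\,dt,
$$
and the right-hand side tends to $0$ because $g$ is bounded and continuous in its middle argument, after reducing --- with the help of the $V$- and $h$-bounds --- to a compact range of the arguments. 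This yields $\Pi^n\to\Pi$ weakly.
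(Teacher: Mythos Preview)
Your overall architecture coincides with the paper's: doubling of variables with coupled noise, defining $\Pi^n$ as the push-forward of $\delta_{u(y,t)}\pi^n_t(dxdy)\,dt$ onto $(u,x,t)$, and checking $\Pi^n\in S_R(\sigma^n)$ via the same Lyapunov estimate. Two points deserve comment.

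First, the regularisation you sketch to force the $y$-marginal to equal $\mu$ is not the one that works, and you correctly flag this as the main obstacle. Mollifying in $x$ and adding $\delta I$ does nothing for the $y$-block; the paper instead mollifies \emph{in $y$} via the weighted-average construction of \cite{Superp21}: one sets $\mu_t^{\varepsilon}(y)=\varepsilon\phi(y)+(1-\varepsilon)\varepsilon^{-1}\!\int_t^{t+\varepsilon}\!\int\omega_\varepsilon(y-z)\,\mu_s(dz)\,ds$ (with $\phi$ Gaussian) and defines $A_\varepsilon,b_\varepsilon,(Qu)_\varepsilon$ as averages against $\omega_\varepsilon(y-z)\mu_s(dz)\,ds/\mu_t^\varepsilon(y)$. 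The point is that $\mu_t^\varepsilon$ then solves the mollified FPK equation \emph{by construction}, while the Gaussian perturbation makes the diffusion locally non-degenerate and locally Lipschitz in $y$; uniqueness (Remark~\ref{r8}) now forces the $y$-marginal of the doubled solution to be $\mu_t^\varepsilon$, and one lets $\varepsilon\to0$. This is the missing mechanism.

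Second, your route to $\Pi^n\to\Pi$ differs from the paper's. You propose a direct Gr\"onwall for $\int\chi(|x-y|^2)\,d\Theta^n_t$, which forces you to handle $\sqrt{A(x,t,\sigma^n)}-\sqrt{A(y,t,\sigma)}$ with mismatched measures; (H2.2) is only stated for a single $\mu$, and bounding the cross terms globally would require $C_2(\sigma^n)$ in (H2.3) to be uniform in $n$, which is not assumed. The paper avoids this by reversing the order: first extract a subsequential limit $\pi_t$ of $\pi^{n_k}_t$ (tightness from the $V$-bound, equicontinuity as in Lemma~\ref{lem3}(ii)), show $\pi_t\,dt$ solves the doubled equation with $\sigma$ in \emph{both} blocks, and only then prove $\pi_t$ sits on the diagonal using $H(v)=\ln(1+v/\delta)$ and (H2.2)--(H2.3) for the fixed measure $\sigma$. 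Since $P_R$ is compact, showing that every convergent subsequence of $\Pi^n$ has limit $\Pi$ suffices. Your direct approach might be salvageable with additional truncation arguments, but the paper's ordering is what makes the hypotheses, as stated, adequate.
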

	\begin{proof}
		The proof is in several steps.
		
		\vspace*{0.1cm}
		
		{\bf I. (The Doubling of variables)}
		
		\vspace*{0.1cm}
		
		Let $(x, y)\in\mathbb{R}^d_x\times\mathbb{R}^d_y$.
		Set
		$$
		\mathbb{A}_n(x, y, t)=\left(
		\begin{array}{cc}
			A(x, t, \sigma^n) & \sqrt{A(x, t, \sigma^n)}\sqrt{A(y, t, \sigma)} \\
			\sqrt{A(y, t, \sigma)}\sqrt{A(x, t, \sigma^n)} & A(y, t, \sigma) \\
		\end{array}
		\right)
		$$
		and
		$$
		\mathbb{B}_n(x, y, t)=\left(
		\begin{array}{l}
			b(x, t, \sigma^n)+Q(x, t, \sigma^n)u(y, t) \\
			b(y, t, \sigma)+Q(y, t, \sigma)u(y, t) \\
		\end{array}
		\right).
		$$
		Let us consider the differential operator
		$$
		\mathbb{L}_n\psi(x, y)={\rm trace}\bigl(\mathbb{A}_n(x, y, t)D^2\psi(x, y)\bigr)+
		\langle\mathbb{B}_n(x, y, t), \nabla\psi(x, y)\rangle.
		$$
		Denote by $\nu\circ(x, x)^{-1}$ the push-forward measure of $\nu$ by the mapping $x\mapsto (x, x)$.
        Suppose that the Cauchy problem
		$$
		\partial_t\pi_t=\mathbb{L}_n^{*}\pi_t, \quad \pi_0=\nu\circ(x, x)^{-1}
		$$
		has a probability solution $\pi^n=\pi^n_t\,dt$ such that 1) the mapping $t\mapsto\pi^n_t$ is continuous,
		2) for every $t$ the projection of the measure $\pi^n_t$ on $y$ is equal to the measure $\mu_t$,
		3) for every $t$ the estimate
		$$
		\int_{\mathbb{R}^d\times\mathbb{R}^d}V(x)\pi_t^n(dxdy)\le Re^{Mt}
		$$
        holds. The steps V--IX are devoted to the existence of the measure $\pi^n$.
		Denote by $\Pi^n$ the projection of the measure
		$$
		\delta_{u(y, t)}(du)\pi_t^n(dxdy)\,dt
		$$
        on the space $U\times\mathbb{R}^d\times[0, T]$ of variables $(u, x, t)$.
		Note that for every Borel bounded function $\varphi$ on $U\times[0, T]$ the equalities
		\begin{multline*}
			\int_{U\times\mathbb{R}^d\times[0, T]}\varphi(u, t)\Pi^n(dudxdt)=\int_{\mathbb{R}^d\times\mathbb{R}^d\times[0, T]}\varphi(u(y, t), t)\pi_t^n(dxdy)\,dt=
			\\
			=\int_{\mathbb{R}^d\times[0, T]}\varphi(u(y,t), t)\mu_t(dy)\,dt=\int_{U\times\mathbb{R}^d\times[0, T]}\varphi(u, t)\Pi(dudxdt)
		\end{multline*}
        are fulfilled. Hence the projection of the measure $\Pi^n$ on $(u, t)$ is equal to the projection of the measure $\Pi$ on $(u, t)$,
        in particular, we have the estimate
		$$
		\int_{U\times\mathbb{R}^d\times[0, T]} h(|u|)\Pi^n(dudxdt)\le \gamma R.
		$$
		Denote by $\mu^n_t(dx)$ the projection of $\pi^n_t$ on $x$. Since for every Borel bounded function
        $\varphi$ on $\mathbb{R}^d\times[0, T]$ the equalities
		$$
		\int_{U\times\mathbb{R}^d\times[0, T]}\varphi(x, t)\Pi^n(dudxdt)=\int_{\mathbb{R}^d\times\mathbb{R}^d\times[0, T]}\varphi(x, t)\pi_t^n(dxdy)\,dt=
		\int_{\mathbb{R}^d\times[0, T]}\varphi(x, t)\mu_t^n(dx)\,dt
		$$
		are fulfilled, the projection of the measure $\Pi^n$ on $(x, t)$ is equal to $\mu^n_t(dx)\,dt$.
		
		According to the condition (H2.3), we have
		$$
		\|A(y, t, \sigma)\|+|b(y, t, \sigma)|+h^{*}(\|Q(y, t, \sigma)\|)\le C_2(\sigma)V(y).
		$$
		Moreover, the inequality
		$$
		\|Q(y, t, \sigma)\||u(y, t)|\le h^{*}(\|Q(y, t, \sigma)\|)+h(|u(y, t)|)
		$$
        holds. Hence for every compact set $K\subset\mathbb{R}^d$ (with respect to~$x$)
        the coefficients of $\mathbb{L}_n$ are integrable with respect to the
		measure~$\pi_t^n\,dt$ on $K\times\mathbb{R}^d\times[0, T]$.
        Let $\zeta\in C_0^{\infty}(\mathbb{R}^d)$, $\eta\in C_0^{\infty}(\mathbb{R}^d)$,
		and $\eta(y)=1$ if $|y|\le 1$.
		Substitute the function $\psi(x, y)=\zeta(x)\eta(y/k)$ in the equality
		$$
		\int_{\mathbb{R}^d\times\mathbb{R}^d}\psi(x, y)\pi_t^n(dxdy)-\int_{\mathbb{R}^d}\psi(x, x)\nu(dx)
		=\int_0^t\int_{\mathbb{R}^d\times\mathbb{R}^d}\mathbb{L}_n\psi(x, y, s)\pi_s^n(dxdy)\,ds.
		$$
		Letting $k\to\infty$, we obtain
		$$
		\int_{\mathbb{R}^d}\zeta(x)\mu_t^n(dx)-\int_{\mathbb{R}^d}\zeta(x)\nu(dx)
		=\int_0^t\int_{\mathbb{R}^d\times\mathbb{R}^d}L_{\sigma^n, u(y, s)}\zeta(x, s)\pi_s^n(dxdy)\,ds.
		$$
		Note that
		$$
		\int_0^t\int_{\mathbb{R}^d\times\mathbb{R}^d}L_{\sigma^n, u(y, s)}\zeta(x, s)\pi_s^n(dxdy)\,ds=
		\int_{U\times\mathbb{R}^d\times[0, t]}L_{\sigma^n, u}\zeta(x, s)\Pi^n(dudxds).
		$$
		Thus the measure $\Pi^n$ belongs to the set $S_{R}(\sigma^n)$.
		
		Let us verify that the measures $\Pi^n$ converge weakly to the measure $\Pi$.
        Since $\Pi^n, \Pi\in P_R$ and $P_R$ is a compact set, it suffices to prove that
        if the measures $\Pi^n$ converge, then the limit is equal to $\Pi$.
        To do this it suffices to find a subsequence
        converging to the measure $\Pi$.

		\vspace*{0.1cm}
		
		{\bf II. (The convergence of a subsequence $\pi^{n_k}=\pi^{n_k}_t\,dt$)}
		
		\vspace*{0.1cm}
		
		Let us show that one can extract a subsequence $n_k$ such that
		for every $t\in[0, T]$ the measures $\pi^{n_k}_t$ converge weakly to a probability measure $\pi_t$,
		solving the Cauchy problem $\partial_t\pi_t=\mathbb{L}^{*}\pi_t$, $\pi_0=\nu\circ(x, x)^{-1}$, with the operator
		$$
		\mathbb{L}\psi(x, y)={\rm trace}\bigl(\mathbb{A}(x, y, t)D^2\psi(x, y)\bigr)+
		\langle\mathbb{B}(x, y, t), \nabla\psi(x, y)\rangle,
		$$
		where
		$$
		\mathbb{A}(x, y, t)=\left(
		\begin{array}{cc}
			A(x, t, \sigma) & \sqrt{A(x, t, \sigma)}\sqrt{A(y, t, \sigma)} \\
			\sqrt{A(y, t, \sigma)}\sqrt{A(x, t, \sigma)} & A(y, t, \sigma) \\
		\end{array}
		\right)
		$$
		and
		$$
		\mathbb{B}(x, y, t)=\left(
		\begin{array}{l}
			b(x, t, \sigma)+Q(x, t, \sigma)u(y, t) \\
			b(y, t, \sigma)+Q(y, t, \sigma)u(y, t) \\
		\end{array}
		\right).
		$$
		Note that for every $n$ the projection of $\pi^n_t$ on $y$ is equal to $\mu_t$ and the estimate
		$$
		\int_{\mathbb{R}^d\times\mathbb{R}^d}V(x)\pi^n_t(dxdy)\le Re^{Mt}
		$$
		holds for all $t\in[0, T]$. By the Prokhorov theorem for every $t$ one can extract from the sequence $\pi^n_t$ a convergent subsequence.
		Let $\psi\in C_0^{\infty}(\mathbb{R}^d\times\mathbb{R}^d)$ and the support of $\psi$ be in $B\times B$,
		where $B$ is an open ball in $\mathbb{R}^d$. Set
		$$
		C(B)=\sup_{x\in B, t\in [0, T], \theta\in\mathcal{M}_{R, M}(V)}\Bigl(\|A(x, t, \theta)\|
		+|b(x, t, \theta)|+\|Q(x, t, \theta)\|\Bigr).
		$$
		Then for every $0\le s<t\le T$ we have
		$$
		\Bigl|\int_s^t\int_{\mathbb{R}^d\times\mathbb{R}^d}\mathbb{L}\psi(x, y, s)\pi^n_s(dxdy)\,ds\Bigr|
		\le 2C(B)C(\psi)|t-s|+2C(B)C(\psi)\int_s^t\int_{\mathbb{R}^d}|u(y, s)|\mu_s(dy)\,ds,
		$$
		where
		$$
		C(\psi)=\sup_{x, y}\Bigl(|\nabla\psi(x, y)|+\|D^2\psi(x, y)\|\Bigr).
		$$
		Arguing as in Lemma~\ref{lem3}, we obtain
		$$
		\Bigl|\int_{\mathbb{R}^d\times\mathbb{R}^d}\psi(x, y)\pi_t^n(dxdy)-\int_{\mathbb{R}^d\times\mathbb{R}^d}\psi(x, y)\pi_s^n(dxdy)\Bigr|\le
		\omega_{\psi}(|t-s|),
		$$
		where $\omega_{\psi}(v)=2C(B)C(\psi)v+2C(B)C(\psi)vh^{-1}(\gamma R/v)$.
        Repeating the arguments from the proof of Lemma~\ref{lem2}, we derive the
		existence of a sequence $n_k$ such that for every $t$
		the measures $\pi^{n_k}_t$ converge weakly to a probability measure $\pi_t$.
		Moreover, the mapping $t\mapsto\pi_t$ is continuous with respect to the weak topology. Note also that for every $t$ the
        projection of $\pi_t$ on $y$ is equal to the measure~$\mu_t$.
		
		Let us verify that $\pi_t\,dt$ is a solution to the Cauchy problem.
		Let $\psi\in C_0^{\infty}(\mathbb{R}^d\times\mathbb{R}^d)$ and the support of
		$\psi$ be in the set $B\times B$, where $B$ is an open ball in $\mathbb{R}^d$.
		We need to pass to the limit in the equality
		$$
		\int_{\mathbb{R}^d\times\mathbb{R}^d}\psi(x, y)\pi_t^{n_k}(dxdy)-\int_{\mathbb{R}^d}\psi(x, x)\nu(dx)
		=\int_0^t\int_{\mathbb{R}^d\times\mathbb{R}^d}\mathbb{L}_{n_k}\psi(x, y, s)\pi_s^{n_k}(dxdy)\,ds.
		$$
		Set
		\begin{multline*}
			C_k(t)=\sup_{x\in B}
			\Bigl(\|A(x, t, \sigma^{n_k})-A(x, t, \sigma)\|+|b(x, t, \sigma^{n_k})-b(x, t, \sigma)|+
			\\
			+\|Q(x, t, \sigma^{n_k})-Q(x, t, \sigma)\|\Bigr)
		\end{multline*}
		and
		$$
		C(\psi)=\sup_{x, y}\Bigl(|\nabla\psi(x, y)|+\|D^2\psi(x, y)\|\Bigr).
		$$
		We have the estimate
		$$
		\Bigl|\mathbb{L}_{n_k}\psi(x, y, t)-\mathbb{L}\psi(x, y, t)\Bigr|\le
		C_k(t)C(\psi)+C_k(t)C(\psi)|u(y, t)|.
		$$
        Note that
        \begin{multline*}
        \int_0^T\int_{\mathbb{R}^d\times\mathbb{R}^d}\Bigl(C_k(t)C(\psi)+C_k(t)C(\psi)|u(y, t)|\Bigr)\pi_t^{n_k}(dxdy)\,dt=
        \\
        =\int_0^T\int_{\mathbb{R}^d}\Bigl(C_k(t)C(\psi)+C_k(t)C(\psi)|u(y, t)|\Bigr)\mu_t(dy)\,dt.
        \end{multline*}		
        Since $\lim_{k\to\infty}C_k(t)=0$ and $\sup_{k, t}C_k(t)<\infty$, we obtain
        $$
        \lim_{k\to\infty}\int_0^T\int_{\mathbb{R}^d}\Bigl(C_k(t)C(\psi)+C_k(t)C(\psi)|u(y, t)|\Bigr)\mu_t(dy)\,dt=0.
        $$
        Hence we have
		$$
		\lim_{k\to\infty}\int_0^t\int_{\mathbb{R}^d\times\mathbb{R}^d}
		\Bigl|\mathbb{L}_{n_k}\psi(x, y, s)-\mathbb{L}\psi(x, y, s)\Bigr|\pi^{n_k}_s(dxdy)\,ds=0.
		$$
		Thus it suffices to prove that
		$$
		\lim_{k\to\infty}\int_0^t\int_{\mathbb{R}^d\times\mathbb{R}^d}\mathbb{L}\psi(x, y, s)\pi^{n_k}_s(dxdy)\,ds=
		\int_0^t\int_{\mathbb{R}^d\times\mathbb{R}^d}\mathbb{L}\psi(x, y, s)\pi_s(dxdy)\,ds.
		$$
		We only consider the terms with
		$$
		\langle Q(x, t, \sigma)u(y, t), \nabla_x\psi(x, y)\rangle \quad \hbox{\rm and} \quad
		\langle Q(y, t, \sigma)u(y, t), \nabla_y\psi(x, y)\rangle.
		$$
		To use the arguments from Remark~\ref{r4} we need to replace the function $(y, t)\mapsto u(y, t)$
		with a bounded continuous function $(y, t)\mapsto\widetilde{u}(y, t)$. Indeed, we have the estimate
		\begin{multline*}
			\Bigl|\int_0^T\int_{\mathbb{R}^d\times\mathbb{R}^d}
			\langle Q(x, t, \sigma)(u(y, t)-\widetilde{u}(y, t)), \nabla_x\psi(x, y)\rangle(\pi_t^n+\pi)(dxdy)\,dt\Bigr|+
\\
\Bigl|\int_0^T\int_{\mathbb{R}^d\times\mathbb{R}^d}
			\langle Q(y, t, \sigma)(u(y, t)-\widetilde{u}(y, t)), \nabla_y\psi(x, y)\rangle(\pi_t^n+\pi)(dxdy)\,dt\Bigr|
\le
			\\
			\le 4C(\psi)\sup_{x\in B, t\in[0, T], \theta\in\mathcal{M}_{R, M}(V)}\|Q(x, t, \theta)\|
			\int_0^T\int_{\mathbb{R}^d}|u(y, t)-\widetilde{u}(y, t)|\mu_t(dy)\,dt.
		\end{multline*}
		By choosing $\widetilde{u}$, one can make the right-hand side arbitrary small.
		
		Finally, we note that for every $t\in[0, T]$ one has the estimate		
        $$
		\int_{\mathbb{R}^d\times\mathbb{R}^d}V(x)\pi_t(dxdy)\le Re^{Mt}.
		$$
		
		\vspace*{0.1cm}
		
		{\bf III. (The equality $\pi_t=\mu_t\circ(y, y)^{-1}$)}.
		
		\vspace*{0.1cm}
		
		We know that the projection of $\pi_t$ on $y$ is equal to $\mu_t$. To prove $\pi_t=\mu_t\circ(y, y)^{-1}$
		it suffices to verify that for every $t$ the equality $x=y$ holds for $\pi_t$ -- almost all $(x, y)$.
		
		Note that for every function $H\in C^2[0, +\infty)$ we have the equality
		\begin{multline*}
			\mathbb{L}H\Bigl(\frac{|x-y|^2}{2}\Bigr)=
			H'\Bigl(\frac{|x-y|^2}{2}\Bigr){\rm trace}\Biggl(\Bigl(\sqrt{A(x, t, \sigma)}-\sqrt{A(y, t, \sigma)}\Bigr)^2\Biggr)+
			\\
			H''\Bigl(\frac{|x-y|^2}{2}\Bigr)\Big\langle(\sqrt{A(x, t, \sigma)}-\sqrt{A(y, t, \sigma)}\bigr)^2(x-y), x-y\Big\rangle+
			\\
			H'\Bigl(\frac{|x-y|^2}{2}\Bigr)
			\Big\langle\bigl(b(x, t, \sigma)-b(y, t, \sigma)\bigr)+
			\bigl(Q(x, t, \sigma)-Q(y, t, \sigma)\bigr)u(y, t), x-y\Big\rangle.
		\end{multline*}
		Let $\delta>0$. Let us consider the function
		$$
		H(v)=\ln\Bigl(1+\frac{v}{\delta}\Bigr).
		$$
		Let us remark that $H'(v)>0$, $H'(v)v\le 1$ and $-1\le H''(v)v^2\le 0$. Hence
        $$
        H''\Bigl(\frac{|x-y|^2}{2}\Bigr)\Big\langle\Bigl(\sqrt{A(x, t, \sigma)}-\sqrt{A(y, t, \sigma)}\Bigr)^2(x-y), x-y\Big\rangle\le 0.
        $$
		Applying the condition (H2.2), we obtain
		\begin{multline*}
			\mathbb{L}\ln\Bigl(1+\frac{|x-y|^2}{2\delta}\Bigr)\le C_1\Bigl(V(x)+V(y)+
			h^{*}(\Theta(x, t))+h^{*}(\Theta(y, t))+2h(|u(t, y)|)\Bigr)\le
			\\
			\le C_2\Bigl(V(x)+V(y)+h(|u(t, y)|)\Bigr),
		\end{multline*}
		where the numbers $C_1$ and $C_2$ do not depend on $\delta$.
		Let $\zeta\in C_0^{\infty}(\mathbb{R}^d\times\mathbb{R}^d)$, $0\le\zeta\le 1$, $\zeta(x, y)=1$ if $|x|^2+|y|^2\le 1$ and
		$\zeta(x)=0$ if $|x|^2+|y|^2\ge 4$. Set $\zeta_j(x, y)=\zeta(x/j, y/j)$.
		Since
		\begin{multline*}
			\mathbb{L}\zeta_j(x, y)H\Bigl(\frac{|x-y|^2}{2}\Bigr)=\zeta_j(x, y)\mathbb{L}H\Bigl(\frac{|x-y|^2}{2}\Bigr)+
			\\
			H\Bigl(\frac{|x-y|^2}{2}\Bigr)\mathbb{L}\zeta_j(x, y)
			+2\Big\langle\mathbb{A}\nabla\zeta_j(x, y), \nabla H\Bigl(\frac{|x-y|^2}{2}\Bigr)\Big\rangle,
		\end{multline*}
		we have
		\begin{multline*}
			\mathbb{L}\zeta_j(x, y)H\Bigl(\frac{|x-y|^2}{2}\Bigr)\le
			C_2\Bigl(V(x)+V(y)+h(|u(t, y)|)\Bigr)+
			\\
			\frac{1}{j}I_{j^2\le |x|^2+|y|^2\le 4j^2}(x, y)C_3\Bigl(V(x)+V(y)+h(|u(y, t)|)\Bigr)H\Bigl(\frac{|x-y|^2}{2}\Bigr)+
			\\
			\frac{1}{j}I_{j^2\le |x|^2+|y|^2\le 4j^2}(x, y)C_4\Bigl(V(x)+V(y)\Bigr)|x-y|H'\Bigl(\frac{|x-y|^2}{2}\Bigr),
		\end{multline*}
		where the numbers $C_3$ and $C_4$ do not depend on~$\delta$ and $j$.
		Note that the function
		$$
		\frac{1}{j}I_{j^2\le |x|^2+|y|^2\le 4j^2}(x, y)H\Bigl(\frac{|x-y|^2}{2}\Bigr)
		$$
		is bounded on $\mathbb{R}^d\times\mathbb{R}^d$. Moreover, the inequality
        $|x-y|H'\Bigl(\frac{|x-y|^2}{2}\Bigr)\le 1/\sqrt{\delta}$ holds for all $x, y\in\mathbb{R}^d$.
		Substitute the function $\zeta_j(x, y)H\Bigl(\frac{|x-y|^2}{2}\Bigr)$
        into the integral identity defining the solution $\pi_t\,dt$.
		Letting $j\to\infty$,
		we obtain the estimate
		$$
		\int_{\mathbb{R}^d\times\mathbb{R}^d}\ln\Bigl(1+\frac{|x-y|^2}{2\delta}\Bigr)\pi_t(dxdy)\le
		C_2\int_0^t\int_{\mathbb{R}^d\times\mathbb{R}^d}\Bigl(V(x)+V(y)+h(|u(y, t)|)\Bigr)\pi_s(dxdy)\,ds.
		$$
		Thus for every $\delta>0$ we have
		$$
		\int_{\mathbb{R}^d\times\mathbb{R}^d}\ln\Bigl(1+\frac{|x-y|^2}{2\delta}\Bigr)\pi_t(dxdy)\le
		C_2\Bigl(\frac{2R}{M}e^{MT}+\gamma R\Bigr).
		$$
		Letting $\delta\to 0+$, we conclude that the equality $x=y$ holds for $\pi_t$ -- almost all $(x, y)$.
        The idea of this proof, in particular, the choice of $H$, is well known (see, for example, \cite{RZ}).

		\vspace*{0.1cm}
		
		{\bf IV. (The convergence $\Pi^{n_k}\to \Pi$)}.
		
		\vspace*{0.1cm}
		
		Assume that the function $(u, x, t)\mapsto \varphi(u, x, t)$ is bounded and 1--Lipschitzian.
		For every continuous bounded function $\widetilde{u}(y, t)$ we have the estimate
		$$
		\int_0^T\int|\varphi(u(y, t), x, t)-\varphi(\widetilde{u}(y, t), x, t)|\pi^n_t(dxdy)\,dt\le
		\int_0^T\int|u(y, t)-\widetilde{u}(y, t)|\mu_t(dy)\,dt.
		$$
        By choosing $\widetilde{u}$, one can make the right-hand side arbitrary small.
		Using this observation and the weak convergence
		$\pi^{n_k}\to \pi$, we obtain the equality
		$$
		\lim_{k\to\infty}\int_0^T\int_{\mathbb{R}^d\times\mathbb{R}^d}\varphi(u(y, t), x, t)\pi_t^{n_k}(dxdy)\,dt
		=\int_0^T\int_{\mathbb{R}^d\times\mathbb{R}^d}\varphi(u(y, t), x, t)\pi_t(dxdy)\,dt.
		$$
		Finally, we note that
		$$
		\int_0^T\int_{\mathbb{R}^d\times\mathbb{R}^d}\varphi(u(y, t), x, t)\pi_t^{n_k}(dxdy)\,dt=
		\int_{U\times\mathbb{R}^d\times[0, T]}\varphi(u, x, t)\Pi^{n_k}(dudxdt)
		$$
		and
		\begin{multline*}
			\int_0^T\int_{\mathbb{R}^d\times\mathbb{R}^d}\varphi(u(y, t), x, t)\pi_t(dxdy)\,dt=
			\int_0^T\int_{\mathbb{R}^d}\varphi(u(y, t), y, t)\mu_t(dy)\,dt=
			\\
			=\int_{U\times\mathbb{R}^d\times[0, T]}\varphi(u, x, t)\Pi(dudxdt).
		\end{multline*}
		
		Thus we only need to construct $\pi^n$.
		
		\vspace*{0.1cm}
		
		{\bf V. (Smoothing of the coefficients)}
		
		\vspace*{0.1cm}
		
		Firstly, we construct a solution on $[0, T']$, where $T'<T$.
		
		Let $0<\varepsilon<\min\{2^{-1}, T-T'\}$ and $\phi$ be the standard Gaussian density on $\mathbb{R}^d$.
		We also use the function
		$\omega_{\varepsilon}(y)=\varepsilon^{-d}\omega(y/\varepsilon)$, where $\omega\in C_0^{\infty}(\mathbb{R}^d)$, $\omega\ge 0$,
		$\|\omega\|_{L^1}=1$ and the support of $\omega$ is in $\{y\colon \, |y|<1\}$.
        Recall that we are given the measures $\mu_t$
        such that
        $$
        \partial_t\mu_t=L^{*}_{\sigma, u(y, t)}\mu_t, \quad \mu_0=\nu,
        $$
        where
        $$
        L_{\sigma, u(y, t)}\psi(y)={\rm trace}\bigl(A(y, t, \sigma)D^2\psi(y)\bigr)+
        \langle b(y, t, \sigma)+Q(y, t, \sigma)u(y, t), \nabla\psi(y)\rangle.
        $$
		Set
		$$
		\mu_t^{\varepsilon}(y):=\varepsilon\phi(y)
		+\frac{1-\varepsilon}{\varepsilon}\int_t^{t+\varepsilon}\int_{\mathbb{R}^d}\omega_{\varepsilon}(y-z)\mu_s(dz)\,ds,
		$$
		$$
		a^{ij}_{\varepsilon}(y, t):=\frac{1-\varepsilon}{\varepsilon \mu_t^{\varepsilon}(y)}
		\int_t^{t+\varepsilon}\int_{\mathbb{R}^d}a^{ij}(z, s, \sigma)\omega_{\varepsilon}(y-z)\mu_s(dz)\,ds,
		$$
		$$
		b^{i}_{\varepsilon}(y, t):=\frac{1-\varepsilon}{\varepsilon \mu_t^{\varepsilon}(y)}
		\int_t^{t+\varepsilon}\int_{\mathbb{R}^d}b^{i}(z, s, \sigma)\omega_{\varepsilon}(y-z)\mu_s(dz)\,ds,
		$$
		$$
		(Qu)^{i}_{\varepsilon}(y, t):=\frac{1-\varepsilon}{\varepsilon \mu_t^{\varepsilon}(y)}
		\int_t^{t+\varepsilon}\int_{\mathbb{R}^d}Q(z, s, \sigma)u(s, z)\omega_{\varepsilon}(y-z)\mu_s(dz)\,ds,
		$$
		$$
		u_{\varepsilon}(y, t):=\frac{1-\varepsilon}{\varepsilon \mu_t^{\varepsilon}(y)}
		\int_t^{t+\varepsilon}\int_{\mathbb{R}^d}u(z, s)\omega_{\varepsilon}(y-z)\mu_s(dz)\,ds,
		$$
		Let us verify that the function $\mu^{\varepsilon}_t(y)$ is a solution to the Cauchy problem
		$$
		\partial_t\mu^{\varepsilon}_t=L_{\varepsilon}^{*}\mu^{\varepsilon}_t, \quad \mu^{\varepsilon}_0=\nu^{\varepsilon},
		$$
		with the operator
		$$
		L_{\varepsilon}\psi={\rm trace}(A_{\varepsilon}D^2\psi)+\langle b_{\varepsilon}+(Qu)_{\varepsilon}, \nabla\psi\rangle
		+\frac{\varepsilon\phi}{\mu_t^{\varepsilon}}\Bigl(\Delta\psi-\langle y, \nabla\psi\rangle\Bigr).
		$$
		and the initial condition
		$$
		\nu^{\varepsilon}(y)=\varepsilon\phi(y)
		+\frac{1-\varepsilon}{\varepsilon}\int_0^{\varepsilon}\int_{\mathbb{R}^d}\omega_{\varepsilon}(y-z)\mu_s(dz)\,ds.
		$$
        Using the continuity of the mapping $t\mapsto\mu_t$, we obtain the equality
        $$
        \partial_t\mu^{\varepsilon}_t(y)=
        \frac{1-\varepsilon}{\varepsilon}\int_{\mathbb{R}^d}\omega_{\varepsilon}(y-z)\mu_{t+\varepsilon}(dz)-
        \frac{1-\varepsilon}{\varepsilon}\int_{\mathbb{R}^d}\omega_{\varepsilon}(y-z)\mu_{t}(dz).
        $$
        Since $\mu_t\,dt$ is a solution to the Cauchy problem with the operator $L_{\sigma, u(y, t)}$, the right--hand side
        is equal to the integral
        $$
        \frac{1-\varepsilon}{\varepsilon}\int_{t}^{t+\varepsilon}\int_{\mathbb{R}^d}L_{\sigma, u(z, s)}\omega_{\varepsilon}(y-z)\mu_{s}(dz)\,ds,
        $$
        where the operator $L_{\sigma, u(z, s)}$ is applied to the function $z\mapsto \omega_{\varepsilon}(y-z)$.
        Note that
        $$
        L_{\sigma, u(z, s)}\omega_{\varepsilon}(y-z)=a^{ij}(z, s, \sigma)\partial_{y_i}\partial_{y_j}\omega_{\varepsilon}(y-z)-
        \bigl(b^i(z, s, \sigma)+q^{im}(z, s, \sigma)u_m(z, s)\bigr)\partial_{y_i}\omega_{\varepsilon}(y-z),
        $$
        where summation over repeated indexes is assumed. It follows that
        \begin{multline*}
        \frac{1-\varepsilon}{\varepsilon}\int_{t}^{t+\varepsilon}\int_{\mathbb{R}^d}L_{\sigma, u(z, s)}\omega_{\varepsilon}(y-z)\mu_{s}(dz)\,ds=
        \\
        =\partial_{y_i}\partial_{y_j}\Bigl(a^{ij}_{\varepsilon}(y, t)\mu_t^{\varepsilon}(y)\Bigr)-
        \partial_{y_i}\Bigl(\bigl(b^i_{\varepsilon}(y, t)+\bigl(Qu\bigr)_{\varepsilon}^i(y, t)\bigr)\mu_t^{\varepsilon}(y)\Bigr).
        \end{multline*}
        Finally, we note that $\Delta\phi(y)+{\rm div}\bigl(y\phi(y)\bigr)=0$. Thus we obtain the equality
        $\partial_t\mu_t^{\varepsilon}=L_{\varepsilon}^{*}\mu^{\varepsilon}_t$.

		Let us remark that the measures $\nu^{\varepsilon}(y)\,dy$ converge weakly to the measure $\nu(dy)$ as $\varepsilon\to 0$
        and also the measures
		$\mu_t^{\varepsilon}(y)\,dy$ converge weakly to the measure~$\mu_t(dy)$ as $\varepsilon\to 0$.
		Note that the functions
		$$
		a^{ij}_{\varepsilon}, \quad b^{i}_{\varepsilon}, \quad (Qu)^{i}_{\varepsilon}, \quad u_{\varepsilon}
		$$
		are integrable over $\mathbb{R}^d\times[0, T']$ with respect to the measure $\mu_t^{\varepsilon}(y)\,dy\,dt$
        and the $L^1$--norms of these functions are majorized by a constant independent on $\varepsilon$.
		Let us prove this observation. Consider the function $(Qu)^{i}_{\varepsilon}$. We have
		$$
		\int_0^{T'}\int_{\mathbb{R}^d}\bigl|(Qu)_{\varepsilon}\bigr|\mu_t^{\varepsilon}\,dy\,dt\le
		\int_0^{T}\int_{\mathbb{R}^d}\|Q(y, t, \sigma^n)\||u(y, t)|\mu_t(dy)\,dt.
		$$
		Note that
		$$
		\|Q(y, t, \sigma^n\||u(y, t)|\le h^{*}(\|Q(y, t, \sigma^n\|)+h(|u(y, t)|),
		$$
		Taking into account the condition (H2.3), we obtain
		$$
		\int_0^{T'}\int_{\mathbb{R}^d}\bigl|(Qu)_{\varepsilon}\bigr|\mu_t^{\varepsilon}\,dy\,dt\le
		\frac{1}{M}C_2(\mu)Re^{MT}+\gamma R.
		$$
		The remaining coefficients are considered in a similar manner.
		
		Let $\zeta_j(x)=\zeta(x/j)$, where $\zeta\in C_0^{\infty}(\mathbb{R}^d)$,
		$0\le\zeta\le 1$ and $\zeta(x)=1$ if $|x|\le 1$. Set
		$$
		\mathbb{A}_n^{\varepsilon, j}(x, y, t)=\left(
		\begin{array}{cc}
			\zeta_j(x)A(x, t, \sigma^n) & \sqrt{\zeta_j(x)A(x, t, \sigma^n)}\sqrt{\alpha_{\varepsilon}(y, t)} \\
			\sqrt{\alpha_{\varepsilon}(y, t)}\sqrt{\zeta_j(x)A(x, t, \sigma^n)} & \alpha_{\varepsilon}(y, t) \\
		\end{array}
		\right),
		$$
		where
		$$
		\alpha_{\varepsilon}(y, t)=A_{\varepsilon}(y, t, \sigma)+\frac{\varepsilon\phi(y)}{\mu_t^{\varepsilon}(y)}I
		$$
		and
		$$
		\mathbb{B}_n^{\varepsilon, j}(x, y, t)=\left(
		\begin{array}{l}
			\zeta_j(x)b(x, t, \sigma^n)+\zeta_j(x)Q(x, t, \sigma^n)u_{\varepsilon}(y, t) \\
			b_{\varepsilon}(y, t, \sigma)+(Qu)_{\varepsilon}(y, t, \sigma)
			-y\displaystyle\frac{\varepsilon \phi(y)}{\mu_t^{\varepsilon}(y)} \\
		\end{array}
		\right).
		$$
		Let us consider the operator
		$$
		\mathbb{L}_n^{\varepsilon, j}\psi(x, y, t)={\rm trace}\bigl(\mathbb{A}_n^{\varepsilon, j}(x, y, t)D^2\psi(x, y)\bigr)+
		\langle\mathbb{B}_n^{\varepsilon, j}(x, y, t), \nabla\psi(x, y)\rangle.
		$$
		Note that the coefficients of this operator are continuous in $(x, y)$ and bounded on $K\times[0, T]$
        for every compact set $K\subset\mathbb{R}^d\times\mathbb{R}^d$.
		
		Let $\pi_0^{\varepsilon}$ be an optimal plan for the measures $\nu$ and $\nu^{\varepsilon}$
        with respect to the cost function
		$$c(x, y)=\min\{|x-y|, 1\}.$$
		It means that $\pi_0^{\varepsilon}$ is a minimizer of the functional
		$$
		\eta\mapsto \int_{\mathbb{R}^d\times\mathbb{R}^d}c(x, y)\eta(dxdy),
		$$
		over all probability measures $\eta$ with projections $\nu$ and $\nu^{\varepsilon}$ on the factors.
		It is known (see, for instance, \cite{BK}) that the minimum of this functional tends to zero as $\varepsilon\to 0$
        because the measures $\pi_0^{\varepsilon}$ converge weakly to the measure $\nu\circ(x, x)^{-1}$.
		
		According to Remark~\ref{r3}, there exists a measure $\pi^{n, \varepsilon, j}=\pi_t^{n, \varepsilon, j}\,dt$
        such that this measure is given by a family of sub--probability measures $\pi_t^{n, \varepsilon, j}$ and
		$\pi^{n, \varepsilon, j}$ is a solution to the Cauchy problem
		with the operator $\mathbb{L}_n^{\varepsilon, j}$ and the initial condition~$\pi_0^{\varepsilon}$.
		
		\vspace*{0.1cm}
		
		{\bf VI. (The projections of $\pi^{n, \varepsilon, j}$)}
		
		\vspace*{0.1cm}
		Let us consider the function $\zeta_N$ as above and arbitrary function $\psi\in C_0^{\infty}(\mathbb{R}^d)$.
		Substitute the function $\zeta_N(x)\psi(y)$ into the integral identity which defines the solution~$\pi^{n, \varepsilon, j}$.
        Letting $N\to\infty$, we obtain that the projection of the measure $\pi^{n, \varepsilon, j}$ on $y$
		is a solution to the Cauchy problem for the Fokker--Planck--Kolmogorov equation with the
        operator $L_{\varepsilon}$ and the initial condition $\nu^{\varepsilon}$.
		Since the coefficients are integrable with respect to the measure $\mu^{\varepsilon}_t(y)\,dy$ and the matrix $A_{\varepsilon}$
		is locally Lipschitzian in $y$ and locally non-degenerate, the class of sub--probability solutions
        contains precisely one element~$\mu^{\varepsilon}=\mu_t^{\varepsilon}(y)\,dy\,dt$ (see Remark~\ref{r8}).
        It follows that the projection of
		$\pi_t^{n, \varepsilon, j}$ on $y$ is equal to the measure $\mu_t^{\varepsilon}(y)\,dy$.
        In particular, $\pi_t^{n, \varepsilon, j}$ is a probability measure.
		
		For every $t\in[0, T']$
		let denote by $\mu^{n, \varepsilon, j}_t$ the projection of the measure $\pi_t^{n, \varepsilon, j}$ on $x$.
		Substitute the function $V(x)\zeta_N(x)\zeta_N(y)$ into the integral identity which defines the solution~$\pi^{n, \varepsilon, j}$.
        Note that the coefficients depending on $x$ vanish outside a ball of sufficiently large radius.
		Moreover, the coefficients depending on $y$ are integrable with respect to the measure $\pi^{n, \varepsilon, j}$
		since the projection of $\pi^{n, \varepsilon, j}$ on $(y, t)$ equals the measure $\mu_t^{\varepsilon}(y)\,dydt$.
        Letting $N\to\infty$, we obtain the inequality
		$$
		\int_{\mathbb{R}^d}V(x)\mu^{n, \varepsilon, j}_t(dx)=
		\int_{\mathbb{R}^d}V(x)\nu(dx)
		+\int_0^t\int_{\mathbb{R}^d\times\mathbb{R}^d}\zeta_j(x)L_{\sigma^n, u_{\varepsilon}(y, s)}V(x, s)\pi^{n, \varepsilon, j}_s(dxdy)\,ds.
		$$
		According to the condition (H2.1), we have the estimate
		$$
		\zeta_j(x)L_{\sigma^n, u_{\varepsilon}(y, s)}V(x, s)\le C_LV(x)+C_LRe^{Ms}+C_L\beta_{V, W}(Re^{MT})Re^{MT}+h(|u_{\varepsilon}(y, s)|),
		$$
        where $\beta_{V, W}$ is defined in Remark~\ref{r1}.
		We need the following version of Jensen's inequality. Assume that $\Phi$ is a convex nonnegative
        function on $[0, +\infty)$ and $\Phi(0)=0$. Let $\xi\ge 0$ be a measurable function on a measurable space
		$(X, \mathcal{X})$ with a sub--probability measure~$\omega$. Then
		$$
		\Phi\Bigl(\int_X\xi\,d\omega\Bigr)\le \int_X\Phi(\xi)\,d\omega.
		$$
		Let us prove this estimate. If $\omega(X)=0$, then the inequality trivially holds since $\Phi(0)=0$.
        Let $\omega(X)>0$.
		The convexity of $\Phi$ and the equality $\Phi(0)=0$ imply the inequality
        $\Phi(\lambda v)\le \lambda\Phi(v)$ for every $v\ge 0$ and $0<\lambda<1$.
		Thus we obtain
		$$
		\Phi\Bigl(\int_X\xi\,d\omega\Bigr)\le \omega(X)\Phi\Bigl(\omega(X)^{-1}\int_X\xi\,d\omega\Bigr)\le \int_X\Phi(\xi)\,d\omega.
		$$
		Recall that $h$ is a convex and increasing function on $[0, +\infty)$ with $h(0)=0$.
        Using these properties of $h$ and Jensen's inequality, we derive the estimate
		$$
		h(|u_{\varepsilon}(y, t)|)\le \frac{1}{\varepsilon\mu_t^{\varepsilon}(y)}
		\int_t^{t+\varepsilon}\int_{\mathbb{R}^d}h(|u(z, s)|)\omega_{\varepsilon}(y-z)\mu_s(dy)\,ds.
		$$
		Hence we have the inequalities
		$$
		\int_0^{T'}\int_{\mathbb{R}^d}h(|u_{\varepsilon}(y, t)|)\mu_t^{\varepsilon}(y)\,dy\,dt
		\le \int_0^T\int_{\mathbb{R}^d}h(|u(z, s)|)\mu_s(dz)\,ds\le \gamma R.
		$$
		Estimating $\zeta_j(x)L_{\sigma^n, u(y, s)}V(x, s)$, we get
		\begin{multline*}
		\int_{\mathbb{R}^d}V(x)\mu^{n, \varepsilon, j}_t(dx)\le
		\int_{\mathbb{R}^d}V(x)\nu(dx)+\gamma R+C_L T\beta_{V, W}(Re^{MT})Re^{MT}
\\
		+C_L\int_0^t\Bigl(\int_{\mathbb{R}^d}V(x)\mu^{n, \varepsilon, j}_s(dx)+Re^{Ms}\Bigr)ds.
		\end{multline*}
		Using Gronwall's inequality, we obtain
		$$
		\int_{\mathbb{R}^d}V(x)\mu^{n, \varepsilon, j}_t(dx)\le e^{C_LT}\Bigl(\|V\|_{L^1(\nu)}+\gamma R
		+C_L T\beta_{V, W}(Re^{MT})Re^{MT}\Bigr)+\frac{C_L}{M-C_L}Re^{Mt}.
		$$
        Note that
        $$
        \gamma e^{C_LT}=\frac{1}{4} \quad \hbox{\rm and} \quad \frac{C_L}{M-C_L}=\frac{1}{4}.
        $$
		There exists a number $R_0>0$ such that for all $R>R_0$ we have
$$
e^{C_LT}\Bigl(\|V\|_{L^1(\nu)}+\gamma R
		+C_L T\beta_{V, W}(Re^{MT})Re^{MT}\Bigr)\le \frac{3R}{4}.
$$
It follows that for every $R>R_0$ the estimate
		$$
		\int_{\mathbb{R}^d} V(x)\mu^{n, \varepsilon, j}_t(dx)\le Re^{Mt}
		$$
        holds for all $t\in[0, T']$.
		
		\vspace*{0.1cm}
		
		{\bf VII. (The limit of $\pi^{n, \varepsilon, j}$ as $j\to\infty$)}
		
		\vspace*{0.1cm}
		
		Taking into account the last estimate and the fact that the projection of $\pi^{n, \varepsilon, j}_t$ on $y$
        does not depend on $j$, we obtain that for every $t$ the sequence $\pi^{n, \varepsilon, j}_t$ is tight.
		Hence for every $t$ one can extract a convergent subsequence $\pi^{n, \varepsilon, j_m}_t$.
		Arguing as in Lemma~\ref{lem3}, we find a subsequence $j_m$ such that
		for every $t$ the measures $\pi^{n, \varepsilon, j_m}_t$ converge weakly to a probability measure
        $\pi^{n, \varepsilon}_t$. Moreover, the mapping $t\mapsto \pi^{n, \varepsilon}_t$ is continuous with respect to the weak topology.
		For every function $\psi\in C_0^{\infty}(\mathbb{R}^d\times\mathbb{R}^d)$ there exists a number
		$m_0$ such that for all $m>m_0$ and for every $(x, y, t)\in\mathbb{R}^d\times\mathbb{R}^d\times[0, T']$
        one has the equality
		$$
		\mathbb{L}_{n}^{\varepsilon, j_m}\psi(x, y, t)=\mathbb{L}_{n}^{\varepsilon}\psi(x, y, t),
		$$
		where
		$$
		\mathbb{L}_n^{\varepsilon}\psi(x, y, t)={\rm trace}\bigl(\mathbb{A}_n^{\varepsilon}(x, y, t)D^2\psi(x, y)\bigr)+
		\langle\mathbb{B}_n^{\varepsilon}(x, y, t), \nabla\psi(x, y)\rangle
		$$
		and
		$$
		\mathbb{A}_n^{\varepsilon}(x, y, t)=\left(
		\begin{array}{cc}
			A(x, t, \sigma^n) & \sqrt{A(x, t, \sigma^n)}\sqrt{\alpha_{\varepsilon}(y, t)} \\
			\sqrt{\alpha_{\varepsilon}(y, t)}\sqrt{A(x, t, \sigma^n)} & \alpha_{\varepsilon}(y, t) \\
		\end{array}
		\right),
		$$
		$$
		\mathbb{B}_n^{\varepsilon}(x, y, t)=\left(
		\begin{array}{l}
			b(x, t, \sigma^n)+Q(x, t, \sigma^n)u_{\varepsilon}(y, t) \\
			b_{\varepsilon}(y, t, \sigma)+(Qu)_{\varepsilon}(y, t, \sigma)
			-y\displaystyle\frac{\varepsilon \phi(y)}{\mu_t^{\varepsilon}(y)} \\
		\end{array}
		\right).
		$$
		Letting $m\to\infty$, we obtain
		$$
		\int_{\mathbb{R}^d\times\mathbb{R}^d}\psi(x, y)\pi_t^{n, \varepsilon}(dxdy)=
		\int_{\mathbb{R}^d\times\mathbb{R}^d}\psi(x, y)\pi_0^{\varepsilon}(dxdy)+
		\int_0^t\int_{\mathbb{R}^d\times\mathbb{R}^d}\mathbb{L}_n^{\varepsilon}\psi(x, y, s)\pi_s^{n, \varepsilon}(dxdy)\,ds.
		$$
		Furthermore, for every $t$ the projection of $\pi_t^{n, \varepsilon}$ on $y$ is equal to $\mu_t^{\varepsilon}(y)\,dy$
		and the estimate
		$$
		\int_{\mathbb{R}^d\times\mathbb{R}^d}V(x)\,\pi_t^{n, \varepsilon}(dxdy)\le Re^{Mt}
		$$
        is fulfilled.
		
		\vspace*{0.1cm}
		
		{\bf VIII. (The limit of $\pi_t^{n, \varepsilon}$ as $\varepsilon\to 0$)}
		
		\vspace*{0.1cm}
		
		The last estimate and the convergence of measures $\mu_t^{\varepsilon}(y)\,dy$ to
		the measure $\mu_t$ as $\varepsilon\to 0$ imply that for every $t$ the sequence $\pi_t^{n, \varepsilon_m}$ is tight.
        It follow that for every $t$ one can find a subsequence $\varepsilon_m\to 0$ such that
		the measures $\pi_t^{n, \varepsilon_m}$ converge weakly to a probability measure.
		Let $\psi\in C_0^{\infty}(\mathbb{R}^d\times\mathbb{R}^d)$ and the support of $\psi$
		be in $B\times B$, where $B$ is an open ball of radius $r$ centered at zero.
        Denote by $B'$ the open ball of the radius $r+1$ centered at zero.
		Set
		$$
		C(B')=\sup_{x\in B', t\in[0, T]}\Bigl(\|A(x, t, \sigma^n)\|+|b(x, t, \sigma^n)|+\|Q(x, t, \sigma^n)\|\Bigr)
		$$
		and
		$$
		C(\psi)=\sup_{x, y}\Bigl(|\nabla\psi(x, y)|+\|D^2\psi(x, y)\|\Bigr).
		$$
		Since $\omega_{\varepsilon}(y)=0$ if $|y|>\varepsilon$,
		for every $y\in B$ we have the estimates
		$$
		\|\alpha_{\varepsilon}(y, t)\|\le 2C(B')+1, \quad |b_{\varepsilon}(y, t)|\le 2C(B'), \quad
		|(Qu)_{\varepsilon}(y, t)|\le C(B')|u|_{\varepsilon}(y, t),
		$$
		where the function $|u|_{\varepsilon}$ is defined by the same formula as the function $u_{\varepsilon}$
		replacing $u$ by $|u|$.
		Note that for a nonnegative number $C(B', \psi)$ depending on $C(B')$ and $\psi$ the estimate
		$$
		\Bigl|\mathbb{L}_n^{\varepsilon}\psi(x, y)\Bigr|\le C(B', \psi)+C(B', \psi)|u|_{\varepsilon}(y, t)
		$$
		holds. Moreover, the inequalities
		$$
		\int_0^{T'}\int_{\mathbb{R}^d}h(|u|_{\varepsilon}(y, s))\mu_s^{\varepsilon}(dy)\,ds\le
		\int_0^T\int_{\mathbb{R}^d}h(|u(y, t)|)\mu_t(dy)\,dt\le \gamma R
		$$
        are fulfilled. Arguing as in Lemma~\ref{lem3}, we obtain the estimate
		$$
		\Bigl|\int_{\mathbb{R}^d}\psi(x, y)\pi_t^{n, \varepsilon}(dxdy)-\int_{\mathbb{R}^d}\psi(x, y)\pi_s^{n, \varepsilon}(dxdy)\Bigr|\le
		\omega_{\psi}(|t-s|),
		$$
		where $s, t\in[0, T']$ and $\omega_{\psi}(v)=C(B', \psi)v+C(B', \psi)vh^{-1}(R/2v)$.
        Repeating the arguments from Lemma~\ref{lem2}, we find a subsequence
		$\varepsilon_m\to 0$ such that
		for every $t$ the measures $\pi^{n, \varepsilon_m}_t$ converge weakly to a probability measure $\pi^{n}_t$.
        Moreover, the mapping $t\mapsto \pi^{n}_t$ is continuous with respect to the weak topology.
		The passage to the limit in the integral identity defining the solution
		$\pi^{n, \varepsilon_m}_t$ is based on the following two observations. First, we note that the functions
		$a^{ij}$, $b^i$, $q^{im}$ are locally bounded and continuous in $x$.
        Second, let us consider a function $v$ such that $v$ is integrable over $\mathbb{R}^d\times[0, T]$
        with respect to the measure $\mu_t(dy)\,dt$. Let $\widetilde{v}$ be a smooth function with a compact support
		on $\mathbb{R}^d\times[0, T]$. Set
		$$
		v_{\varepsilon}(y, t)=\frac{1-\varepsilon}{\varepsilon\mu_t^{\varepsilon}(y)}\int_t^{t+\varepsilon}\int_{\mathbb{R}^d}
		v(z, s)\omega_{\varepsilon}(y-z)\mu_s(dz)\,ds
		$$
		and
		$$
		\widetilde{v}_{\varepsilon}(y, t)=\frac{1-\varepsilon}{\varepsilon\mu_t^{\varepsilon}(y)}\int_t^{t+\varepsilon}\int_{\mathbb{R}^d}
		\widetilde{v}(z, s)\omega_{\varepsilon}(y-z)\mu_s(dz)\,ds.
		$$
		Then we have the inequality
		$$
		\int_0^{T'}\int_{\mathbb{R}^d}\Big|v_{\varepsilon}(y, t)
		-\widetilde{v}_{\varepsilon}(y, t)\Big|\mu_t^{\varepsilon}(y)\,dy\,dt\le
		\int_0^T\int_{\mathbb{R}^d}\Big|v(y, t)-\widetilde{v}(y, t)\Big|\mu_t(dy)\,dt.
		$$
		By choosing $\widetilde{v}$, one can make the right-hand side arbitrary small.
		Furthermore, the functions $\widetilde{v}_{\varepsilon}$ converge uniformly to the function $\widetilde{v}$
		on $\mathbb{R}^d\times[0, T']$ as $\varepsilon\to 0$.
        Thus to prove the passage to the limit as $\varepsilon\to 0$ we replace the functions $a^{ij}$, $b^i$, $q^{im}u$ and $u$ in the expressions for
        $a^{ij}_{\varepsilon}$, $b^i_{\varepsilon}$, $(Qu)_{\varepsilon}^i$, $u_{\varepsilon}$
        by functions $\widetilde{a}^{ij}$, $\widetilde{b}^i$, $\widetilde{q^{im}u}$ and $\widetilde{u}$ with compact supports and
        then we use the uniform convergence of these new expressions $\widetilde{a}^{ij}_{\varepsilon}$, $\widetilde{b}^i_{\varepsilon}$, $(\widetilde{Qu})_{\varepsilon}^i$, $\widetilde{u}_{\varepsilon}$ as $\varepsilon\to 0$.
        Similarly the passage to the limit is proved in \cite{Superp21}.

		\vspace*{0.1cm}
		
		{\bf IX. (Extension to the whole interval $[0, T]$.)}
		
		\vspace*{0.1cm}
		
		Let $\pi^{n, k}=\pi^{n, k}_t\,dt$ be a solution on $[0, T-1/k]\times\mathbb{R}^d$.
        Extend $\pi^{n, k}_t$ on $[0, T]$ by the rule
		$\pi^{n, k}_t=\pi^{n, k}_{T-1/k}$ if $t\in [T-1/k, T]$. Note that this new measure is a solution only on $[0, T-1/k]$.
		Arguing again as in the step (II) and letting $k\to\infty$, we obtain the required solution $\pi^n$ on the whole interval~$[0, T]$.
	\end{proof}
	
    Below we always assume that $\omega=\{\omega_{\psi}\}$ is constructed in the statement (iii) of Lemma~\ref{lem3}.
	Let denote by $P_{R}^{\omega}$ the set of all measures $\Pi\in P_{R}$ such that
	the projection of $\Pi$ on $(x, t)$ belongs to the set $\mathcal{M}_{R, M}^{\omega}(V)$.
	Since $P_R$ and $\mathcal{M}_{R, M}^{\omega}(V)$ are compact sets in the weak topology,
    the set $P_{R}^{\omega}$ is compact in the weak topology.
	
	Let us consider the mapping
	$(\sigma, \Pi)\mapsto \mathcal{F}_{\sigma}(\Pi)$
    from $\mathcal{M}_{R, M}^{\omega}(V)\times P_{R}^{\omega}$ to $\mathbb{R}$ given by the formula
	$$
	\mathcal{F}_{\sigma}(\Pi)=
	\int_{U\times\mathbb{R}^d\times[0, T]}f(u, x, t, \sigma)\Pi(dudxdt)+
	\int_{\mathbb{R}^d}g(x, \sigma)\,\mu_{T}(dx),
	$$
    where $\mu_t(dx)\,dt$ is the projection of $\Pi$ on $(x, t)$.
	
	\begin{lemma}\label{lem5}
		{\rm (i)} Let $\sigma^n, \sigma\in\mathcal{M}_{R, M}^{\omega}(V)$, $\Pi^n\in S_{R}(\sigma^n)$,
		$\Pi\in S_{R}(\sigma)$. Assume that the measures~$\sigma^n$ converge weakly to the measure~$\sigma$
		and the measures~$\Pi^n$ converge weakly to the measure~$\Pi$. Then
		$$
		\lim\inf_{n\to\infty}\mathcal{F}_{\sigma_n}(\Pi^n)\ge \mathcal{F}_{\sigma}(\Pi).
		$$
		
		{\rm (ii)} Let $\sigma^n, \sigma\in\mathcal{M}_{R, M}^{\omega}(V)$, $\Pi^n\in S_{R}(\sigma^n)$,
		$\Pi\in S_{R}(\sigma)$ and for every $n$ the projection of $\Pi^n$ on~$(u, t)$ is equal to the
        projection of $\Pi$ on~$(u, t)$. Assume that the measures~$\sigma^n$ converge weakly to the measure~$\sigma$
		and the measures~$\Pi^n$ converge weakly to the measure~$\Pi$. Then		
        $$
		\lim_{n\to\infty}\mathcal{F}_{\sigma_n}(\Pi^n)=\mathcal{F}_{\sigma}(\Pi).
		$$
	\end{lemma}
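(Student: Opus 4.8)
The plan is to handle the two summands of $\mathcal{F}(\sigma,\Pi)=\int_{U\times\mathbb{R}^d\times[0,T]}f(u,x,t,\sigma)\,\Pi(dudxdt)+\int_{\mathbb{R}^d}g(x,\sigma)\,\mu_T(dx)$ separately, where $\mu_t\,dt$ denotes the projection of $\Pi$ on $(x,t)$. In both parts the projections $\mu^n=\mu^n_t\,dt$ of $\Pi^n$ converge weakly to the projection $\mu=\mu_t\,dt$ of $\Pi$, and since all of them lie in $\mathcal{M}_{R,M}(V)$, Lemma~\ref{lem2}(ii) gives $V$--weak convergence; in particular $\mu^n_t\to\mu_t$ weakly for every $t$, $\int V\,d\mu^n_t\le Re^{Mt}$, $\sup_n\int V\,d\Pi^n<\infty$, and $\sup_n\Pi^n(\{|x|>N\})\to0$ as $N\to\infty$. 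I will also use $\int h(|u|)\,d\Pi^n\le\gamma R$, $\sup_t\int W\,d\sigma^n_t\le Re^{MT}$, and the two--sided bound (H3.2), which together with $h\ge0$ yields $-C_f(W(x)+Re^{MT})\le f(u,x,t,\sigma)\le C_hh(|u|)+C_f(W(x)+Re^{MT})$. For the $g$--term I would split $g(x,\sigma^n)=g(x,\sigma)+\bigl(g(x,\sigma^n)-g(x,\sigma)\bigr)$: the first summand integrates against $\mu^n_T$ to $\int g(x,\sigma)\,d\mu_T$ because $g(\cdot,\sigma)/V\to0$ at infinity by (H3.1) and $\mu^n_T\to\mu_T$ $V$--weakly; the second is cut into a ball $\{|x|\le N\}$, where $\sup_{|x|\le N}|g(x,\sigma^n)-g(x,\sigma)|\to0$ by (H3.3), and its complement, which is small uniformly in $n$ by (H3.1) together with $\sup_n\Pi^n(\{|x|>N\})\to0$ and the $V$--weak tail estimate ($W\le\varepsilon V$ far out). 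Hence $\int g(x,\sigma^n)\,d\mu^n_T\to\int g(x,\sigma)\,d\mu_T$ in both (i) and (ii).

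The crux is the $f$--term, and the device I would use is truncation at a height $K>0$: put $f^{K}_{\sigma}(u,x,t):=\min\{f(u,x,t,\sigma),K\}$. This is exploited through two features. First, $|f^{K}_{\sigma}|\le K+C_f(W(x)+Re^{MT})$, $f^{K}_{\sigma}$ is continuous in $(u,x)$ by (H3.4), and the $t$--marginals of all $\Pi^n$ and of $\Pi$ equal Lebesgue measure on $[0,T]$; so by Remark~\ref{r4}, combined with the truncation trick of Remark~\ref{r2} to absorb the $W(x)$--growth (as in the example closing Remark~\ref{r4}), $\int f^{K}_{\sigma}\,d\Pi^n\to\int f^{K}_{\sigma}\,d\Pi$. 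Second, since $f(u,x,t,\sigma)\ge h(|u|)-C_f(W(x)+Re^{MT})$ with $h$ superlinear, on $\{|x|\le N\}$ one has $f^{K}_{\sigma}\equiv K$ once $h(|u|)\ge K+C_f(W(N)+Re^{MT})$; hence, over $\{|x|\le N\}$, the difference $f^{K}_{\sigma^n}-f^{K}_{\sigma}$ vanishes outside a fixed ball $B_1=B_1(K,N)$ in $u$, and there its $\Pi^n$--integral is at most $\int_0^T\sup_{u\in B_1\cap U,\ |x|\le N}|f(u,x,t,\sigma^n)-f(u,x,t,\sigma)|\,dt$, which tends to $0$ by (H3.3) and dominated convergence in $t$ (the integrand being bounded via (H3.2)); over $\{|x|>N\}$ the difference is dominated by $(2K+2C_fRe^{MT})\Pi^n(\{|x|>N\})+2C_f\int_{\{|x|>N\}}W(x)\,d\Pi^n$, small uniformly in $n$ for large $N$. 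Combining, $\int f^{K}_{\sigma^n}\,d\Pi^n\to\int f^{K}_{\sigma}\,d\Pi$ for each fixed $K$.

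Since $f(u,x,t,\sigma^n)\ge f^{K}_{\sigma^n}(u,x,t)$, this gives $\liminf_n\int f(u,x,t,\sigma^n)\,d\Pi^n\ge\int f^{K}_{\sigma}\,d\Pi$; letting $K\to\infty$ with $f^{K}_{\sigma}\uparrow f(\cdot,\cdot,\cdot,\sigma)$ and the $\Pi$--integrable majorant $C_hh(|u|)+C_f(W(x)+Re^{MT})$ yields $\liminf_n\int f(u,x,t,\sigma^n)\,d\Pi^n\ge\int f(u,x,t,\sigma)\,d\Pi$, which together with the $g$--term proves (i). In part (ii) the $(u,t)$--marginals of all $\Pi^n$ coincide with that of $\Pi$; in particular the $u$--marginal is common, so $\int h(|u|)\,d\Pi^n=\int h(|u|)\,d\Pi$ and $\sup_n\int_{\{h(|u|)>A\}}h(|u|)\,d\Pi^n\to0$ as $A\to\infty$. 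From $(f(u,x,t,\sigma^n)-K)^{+}\le(2C_hh(|u|)-K)^{+}+(2C_f(W(x)+Re^{MT})-K)^{+}$, this together with the uniform integrability of $W(x)$ with respect to $\Pi^n$ (from $\sup_n\int V\,d\Pi^n<\infty$ and $W/V\to0$) gives $\sup_n\int(f(u,x,t,\sigma^n)-K)^{+}\,d\Pi^n\to0$ as $K\to\infty$. Writing $\int f(u,x,t,\sigma^n)\,d\Pi^n=\int f^{K}_{\sigma^n}\,d\Pi^n+\int(f(u,x,t,\sigma^n)-K)^{+}\,d\Pi^n$, letting first $n\to\infty$ (previous step) and then $K\to\infty$ gives $\int f(u,x,t,\sigma^n)\,d\Pi^n\to\int f(u,x,t,\sigma)\,d\Pi$, and with the $g$--term, $\mathcal{F}_{\sigma_n}(\Pi^n)\to\mathcal{F}_{\sigma}(\Pi)$.

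The hard part is the lower semicontinuity of the $f$--term in (i): along the sequence one controls only $\int h(|u|)\,d\Pi^n\le\gamma R$, which does not make the superlinear weight $h(|u|)$ uniformly integrable, so one cannot pass to the limit in $\int f\,d\Pi^n$ directly. The truncation at level $K$ is exactly what circumvents this, since $f\gtrsim h(|u|)$ confines $\{f\le K\}$ to a bounded set of controls on $x$--bounded regions; in (ii) the common $(u,t)$--marginal removes the obstruction and upgrades the one--sided bound to an equality. (Note that only the continuity part of (H3.4) is used here; convexity of $f$ in $u$ enters elsewhere, not in this lemma.)
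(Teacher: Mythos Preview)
Your argument is correct and follows essentially the same strategy as the paper's proof: the $g$--term is handled identically, and for the $f$--term both proofs truncate (the paper truncates the shifted nonnegative function $f+C_fW+C_fRe^{MT}$ at level $N$, you truncate $f$ directly at $K$), pass from $\sigma^n$ to $\sigma$ via (H3.3) on compact sets, pass from $\Pi^n$ to $\Pi$ via Remark~\ref{r4}, and then let the truncation level go to infinity (Fatou in (i), uniform integrability from the common $(u,t)$--marginal in (ii)). Your observation that $f^K_{\sigma^n}=f^K_{\sigma}=K$ for large $|u|$ on bounded $x$--sets is a slightly cleaner localization than the paper's use of a tight compact set in $U\times\mathbb{R}^d$, and your reuse in (ii) of the convergence $\int f^K_{\sigma^n}\,d\Pi^n\to\int f^K_{\sigma}\,d\Pi$ from (i) is marginally more streamlined than the paper's separate cutoff argument in $u$ and $x$, but these are cosmetic variations of the same scheme.
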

	\begin{proof}
		Let us prove~(i). Denote by $\mu_t^n(dx)\,dt$ the projection of $\Pi^n$ on $(x, t)$
        and by $\mu_t(dx)\,dt$ the projection of $\Pi$ on $(x, t)$.
		First, we prove that
		$$
		\lim_{n\to\infty}\int_{\mathbb{R}^d}g(x, \sigma^n)\,d\mu_T^n=\int_{\mathbb{R}^d}g(x, \sigma)\,d\mu_T.
		$$
		According to the condition~(H3.1), we have
		$$
		|g(x, \sigma_n)|\le C_g\bigl(W(x)+Re^{MT}\bigr), \quad
		|g(x, \sigma)|\le C_g\bigl(W(x)+Re^{MT}\bigr).
		$$
		Let $\varepsilon>0$. There exists a number $m_{\varepsilon}>0$ such that
		the estimate $C_g\bigl(W(x)+Re^{MT}\bigr)\le\varepsilon V(x)$ holds for all $x$ satisfying the inequality $V(x)>m_{\varepsilon}$.
        Then we obtain the estimates
		$$
		\Bigl|\int_{\mathbb{R}^d}g(x, \sigma^n)\,d\mu_T^n-\int_{\mathbb{R}^d}g(x, \sigma)\,d\mu_T^n\Bigr|\le
		\sup_{\{x\colon\, V(x)\le m_{\varepsilon}\}}|g(x, \sigma_n)-g(x, \sigma)|+
		2\varepsilon Re^{MT}.
		$$
		By (H3.3) the sequence
		$$
		\sup_{\{x\colon\, V(x)\le m_{\varepsilon}\}}|g(x, \sigma_n)-g(x, \sigma)|
		$$
		tends to zero as $n\to\infty$.
		Furthermore, arguing as in Remark~\ref{r4}, we obtain the equality
		$$
		\lim_{n\to\infty}\int_{\mathbb{R}^d}g(x, \sigma)\,d\mu_T^n=\int_{\mathbb{R}^d}g(x, \sigma)\,d\mu_T.
		$$
		Thus there exists a number $n_0$ such that for all $n>n_0$ one has the estimate
		$$
		\Bigl|\int_{\mathbb{R}^d}g(x, \sigma^n)\,d\mu_T^n-\int_{\mathbb{R}^d}g(x, \sigma)\,d\mu_T\Bigr|\le
        \varepsilon\bigl(2+2Re^{MT}\bigr).
		$$
		
		Let us prove the passage to the limit in the integral of the function $f$.
		According to the condition (H3.2), the estimate
		$$
		f(u, x, t, \theta)+C_fW(x)+C_fRe^{MT}\ge 0
		$$
        is fulfilled for every measure $\theta\in \mathcal{M}_{R, M}^{\omega}(V)$.
		For a natural number $N$ we set
		$$
		\widetilde{f}_N(u, x, t, \theta)=\min\{f(u, x, t, \theta)+C_fW(x)+C_fRe^{MT}, N\}
		$$
		and
		$$
		f_N(u, x, t, \theta)=\widetilde{f}_N(u, x, t, \theta)-C_fW(x)-C_fRe^{MT}.
		$$
		Note that $f_N\le f$ and $|\widetilde{f}_N(u, x, t, \theta)|\le N$.
		Applying Remark~\ref{r4} we obtain
		$$
		\lim_{n\to\infty}\int_{U\times\mathbb{R}^d\times[0, T]}\bigl(C_fW(x)+C_fRe^{MT}\bigr)\Pi^n(dudxdt)=
		\int_{U\times\mathbb{R}^d\times[0, T]}\bigl(C_fW(x)+C_fRe^{MT}\bigr)\Pi(dudxdt).
		$$
		Let us consider the integral of $\widetilde{f}_N$.
		Let $\varepsilon>0$.
		There exists a compact set $K\subset U\times\mathbb{R}^d$ such that
		$$\Pi^n(K\times[0, T])\ge 1-\varepsilon.$$
        Set
        $$
        C_n(t)=\sup_{(u, x)\in K}\bigl|\widetilde{f}_N(u, x, t, \sigma^n)-\widetilde{f}_N(u, x, t, \sigma)\bigr|.
        $$
		Note that the function $v\mapsto\min\{v, N\}$ is $1$--Lipschitzian.
        By (H3.3) we have the equality
		$\lim_{n\to\infty}C_n(t)=0$ for all $t\in[0, T]$. Moreover, $|C_n(t)|\le 2N$ for all $t\in[0, T]$.
        Since the inequality
        \begin{multline*}
        \Bigl|\int_{U\times\mathbb{R}^d\times[0, T]}\widetilde{f}_N(u, x, t, \sigma^n)\Pi^n(dudxdt)
		-\int_{U\times\mathbb{R}^d\times[0, T]}\widetilde{f}_N(u, x, t, \sigma)\Pi^n(dudxdt)\Bigr|\le
\\
        \le 2\varepsilon N+\int_0^TC_n(t)\,dt
        \end{multline*}
        holds, there exists a number $n_0$ such that for all $n>n_0$ one has the estimate
		$$
		\Bigl|\int_{U\times\mathbb{R}^d\times[0, T]}\widetilde{f}_N(u, x, t, \sigma^n)\Pi^n(dudxdt)
		-\int_{U\times\mathbb{R}^d\times[0, T]}\widetilde{f}_N(u, x, t, \sigma)\Pi^n(dudxdt)\Bigr|\le \varepsilon\bigl(2N+1\bigr).
		$$
		We stress that the function $f_N(u, x, t, \sigma)$ is continuous in $(u, x)$ and bounded.
        Furthermore, for every $n$ the projection of $\Pi^n$ on $t$ is equal to Lebesgue measure on $[0, T]$.
        According to Remark~\ref{r4}, we arrive at the inequality
		$$
		\lim_{n\to\infty}\int_{U\times\mathbb{R}^d\times[0, T]}\widetilde{f}_N(u, x, t, \sigma)\Pi^n(dudxdt)=
		\int_{U\times\mathbb{R}^d\times[0, T]}\widetilde{f}_N(u, x, t, \sigma)\Pi(dudxdt).
		$$
		By choosing $n_0$ large enough we have for all $n>n_0$ the estimate
		$$
		\Bigl|\int_{U\times[0, T]\times\mathbb{R}^d}\widetilde{f}_N(u, x, t, \sigma^n)\Pi^n(dudtdx)
		-\int_{U\times[0, T]\times\mathbb{R}^d}\widetilde{f}_N(u, x, t, \sigma)\Pi(dudtdx)\Bigr|\le\varepsilon\bigl(2N+2\bigr)
		$$
		Thus for every $N$ we have
		$$
		\lim_{n\to\infty}\int_{U\times\mathbb{R}^d\times[0, T]}\widetilde{f}_N(u, x, t, \sigma^n)\Pi^n(dudxdt)=
		\int_{U\times\mathbb{R}^d\times[0, T]}\widetilde{f}_N(u, x, t, \sigma)\Pi(dudxdt).
		$$
		Taking into account the estimate $f_N\le f$, we obtain the inequality
		$$
		\int_{U\times\mathbb{R}^d\times[0, T]}\bigl(\widetilde{f}_N(u, x, t, \sigma)-C_fW(x)-C_fRe^{MT}\bigr)\Pi(dudtdx)
		+\int_{\mathbb{R}^d}g(x, \sigma)\,d\mu_T\le \lim\inf_{n\to\infty}\mathcal{F}_{\sigma_n}(\Pi^n).
		$$
		Note that the equality
        $$
        \lim_{N\to\infty}\widetilde{f}_N(u, x, t, \sigma)=
		f(u, x, t, \sigma)+C_fW(x)+C_fRe^{MT}
		$$
        holds for every $(u, x, t)\in U\times\mathbb{R}^d\times[0, T]$. Applying Fatou's lemma, we derive the estimate
		$$
		\int_{U\times\mathbb{R}^d\times[0, T]}f(u, x, t, \sigma)\Pi(dudtdx)
		+\int_{\mathbb{R}^d}g(x, \sigma)\,d\mu_T\le \lim\inf_{n\to\infty}\mathcal{F}_{\sigma_n}(\Pi^n).
		$$
		This completes the proof of the assertion~(i).
		
		Let us prove~(ii).
		In (i) it is proved that
        $$
		\lim_{n\to\infty}\int_{\mathbb{R}^d}g(x, \sigma^n)\,d\mu_T^n=\int_{\mathbb{R}^d}g(x, \sigma)\,d\mu_T.
		$$
        Hence it suffices to consider only the term with $f$.
        Denote by $\Lambda(dudt)$ the projection of the measures $\Pi^n$ and $\Pi$ on $(u, t)$.
		By (H3.2) for all $(u, x, t)\in U\times\mathbb{R}^d\times[0, T]$ and for every measure $\theta\in\mathcal{M}_{R, M}(V)$
		one has the estimate
		$$
		|f(u, x, t, \theta)|\le C_hh(|u|)+C_fW(x)+C_fRe^{MT}.
		$$
		Let $\zeta$ be a continuous function on $\mathbb{R}^{d_1}$ such that $0\le\zeta\le 1$,
		$\zeta(u)=1$ if $|u|<1$ and $\zeta(u)=0$ if $|u|>2$. Set $\zeta_N(u)=\zeta(u/N)$.
		For $\varepsilon>0$ there exists a number $m_{\varepsilon}>0$ such that the estimate
        $C_fW(x)+C_fRe^{MT}\le\varepsilon V(x)$
		holds for all $x$ satisfying the inequality $V(x)>m_{\varepsilon}$.
		We have
		\begin{multline*}
		\int_{U\times[0, T]\times\mathbb{R}^d}(1-\zeta_N(u))|f(u, x, t, \sigma^n)|\,\Pi^n(dudtdx)\le
\\
		\int_{|u|>N}\Bigl(C_hh(|u|)+C_fm_{\varepsilon}+C_fRe^{MT}\Bigr)\Lambda(dudt)+\varepsilon TRe^{MT}.
		\end{multline*}
        The same estimate holds for $\Pi$.
		Take a number $N$ such that
		$$
		\int_{|u|>N}\Bigl(C_hh(|u|)+C_fm_{\varepsilon}+C_fRe^{MT}\Bigr)\Lambda(dudt)\le \varepsilon.
		$$
		Note that there exists a number $C_N>0$ such that the inequality
		$$
		\zeta_N(u)|f(u, x, t, \sigma^n)|\le C_N+C_NW(x)
		$$
        holds for all $n$ and for all $(u, x, t)\in U\times\mathbb{R}^d\times[0, T]$.
		Let $\eta$ be a continuous function on $\mathbb{R}^{d}$ such that $0\le\eta\le 1$,
		$\eta(x)=1$ if $|x|<1$ and $\eta(x)=0$ if $|x|>2$. Set $\eta_k(x)=\eta(x/k)$.
        There exists a number $\widetilde{m}_{\varepsilon}>0$ such that the estimate
        $C_N+C_NW(x)\le\varepsilon V(x)$
		holds for all $x$ satisfying the inequality $V(x)>\widetilde{m}_{\varepsilon}$.
		We have
		\begin{multline*}
			\int_{U\times\mathbb{R}^d\times[0, T]}(1-\eta_k(x))\zeta_N(u)|f(u, x, t, \sigma^n)|\,\Pi^n(dudxdt)\le
			\\
			\le (C_N\widetilde{m}_{\varepsilon}+C_N)\Pi^n\bigl(\{(u, x, t)\colon |x|>k\}\bigr)+\varepsilon TRe^{MT}.
		\end{multline*}
        The same estimate holds for $\Pi$.
		Take a number $k$ such that the estimate
		$$
		(C_N\widetilde{m}_{\varepsilon}+C_N)(\Pi^n+\Pi)\bigl(\{(u, x, t)\colon |x|>k\}\bigr)<\varepsilon
		$$
        holds for all $n$.
		Thus for all $n$ one has
		\begin{multline*}
		\Bigl|\int_{U\times\mathbb{R}^d\times[0, T]}f(u, x, t, \sigma^n)\,\Pi^n(dudxdt)-
\\
		\int_{U\times\mathbb{R}^d\times[0, T]}\eta_k(x)\zeta_N(u)f(u, x, t, \sigma^n)\,\Pi^n(dudxdt)\Bigr|
		\le 2\bigl(\varepsilon+\varepsilon TRe^{MT}\bigr).
		\end{multline*}
		The same estimate holds for $\Pi$. Therefor it suffices to prove the passage to the limit for the function
        $\eta_k(x)\zeta_N(u)f(u, x, t, \sigma)$ instead of $f(u, x, t, \sigma)$.
        Set
        $$
        \widetilde{C}_n(t)=\sup_{x, u}\Bigl(\eta_k(x)\zeta_N(u)\bigl|f(u, x, t, \sigma^n)-f(u, x, t, \sigma)\bigr|\Bigr).
		$$
        Let us remark that $\sup_{n, t}\widetilde{C}_n(t)<\infty$ and by (H3.3)
        the equality $\lim_{n\to\infty}\widetilde{C}_n(t)=0$ is fulfilled for
        every~$t\in[0, T]$. Note that the inequality
        \begin{multline*}
		\Bigl|\int_{U\times\mathbb{R}^d\times[0, T]}\eta_k(x)\zeta_N(u)f(u, x, t, \sigma^n)\,\Pi^n(dudxdt)
\\
-\int_{U\times\mathbb{R}^d\times[0, T]}\eta_k(x)\zeta_N(u)f(u, x, t, \sigma)\,\Pi^n(dudxdt)\Bigr|\le \int_0^TC_n(t)\,dt
        \end{multline*}
		is fulfilled and the right-hand side tends to zero as $n\to\infty$. Hence it suffices to verify the equality
		\begin{multline*}
			\lim_{n\to\infty}\int_{U\times\mathbb{R}^d\times[0, T]}\eta_k(x)\zeta_N(u)f(u, x, t, \sigma)\,\Pi^n(dudxdt)=
			\\
			\int_{U\times\mathbb{R}^d\times[0, T]}\eta_k(x)\zeta_N(u)f(u, x, t, \sigma)\,\Pi(dudxdt).
		\end{multline*}
        This equality follows from Remark~\ref{r4}, since the function $\eta_k(x)\zeta_N(u)f(u, x, t, \sigma)$
        is continuous in $(u, x)$ and bounded.
	\end{proof}
	
	\vspace*{0.2cm}
	
	\section{\sc Proof of Theorem~\ref{th1} and Corollary~\ref{cor1}}
	
	\vspace*{0.2cm}
	
    Recall that
    $$
    M=5C_L, \quad \gamma=\frac{1}{4}e^{-C_LT},
    $$
    where $C_L$ is the constant from (H2.1).

	Let us prove Theorem~2.1.
	
	\begin{proof}
        The proof is in several steps.

        \vspace*{0.1cm}

        {\bf I. (A priory estimates)}

        \vspace*{0.1cm}

		Let $\sigma\in\mathcal{M}_{R, M}(V)$ and $u_0\in U$. Note that
        $$
        L_{\sigma, u_0}V=L_{\sigma}V+\langle Qu_0, \nabla V\rangle\le L_{\sigma}V+h^{*}(|Q^{\top}\nabla V|)+h(|u_0|).
        $$
		According to the condition (H2.1), we have the estimate
		$$
		L_{\sigma, u_0}V(x)\le C_LV(x)+C_LRe^{Mt}+C_L\beta_{V, W}(Re^{MT})Re^{MT}+h(|u_0|),
		$$
        where $\beta_{V, W}$ is defined in Remark~\ref{r1}.
		By Remark~\ref{r3} there exists a probability solution $\mu_t\,dt$ to the Cauchy problem
		$$
		\partial_t\mu_t=L_{\sigma, u_0}^{*}\mu_t, \quad \mu_0=\nu.
		$$
        Applying the estimate from Remark~\ref{r3} with $C=C_L$, $\mathcal{V}=V$ and
        $$
        \mathcal{W}=C_LRe^{Mt}+C_L\beta_{V, W}(Re^{MT})Re^{MT}+h(|u_0|),
        $$
		we obtain for every $t\in[0, T]$ the estimate
		$$
		\int_{\mathbb{R}^d} V\,d\mu_t\le e^{C_LT}\Bigl(\|V\|_{L^1(\nu)}+C_LT\beta_{V, W}(Re^{MT})Re^{MT}+Th(|u_0|)\Bigr)+\frac{1}{4}Re^{Mt}.
		$$
        Here we also use the equality $M=5C_L$.
		By Remark~\ref{r1} we have $\lim_{R\to\infty}\beta_{V, W}(R)=0$.
        It follows that there exists a number $R_1>0$ such that for every $R>R_1$
		the estimate
        $$
        e^{C_LT}\Bigl(\|V\|_{L^1(\nu)}+C_LT\beta_{V, W}(Re^{MT})Re^{MT}+Th(|u_0|)\Bigr)\le \frac{3R}{4}
        $$
        holds. Let $R>R_1$. Then the inequality
		$$
		\int_{\mathbb{R}^d} V\,d\mu_t\le Re^{Mt}
		$$
        is fulfilled for every $t\in[0, T]$.
		Set
		$$
		F_{\sigma}(u, \mu)=\int_{0}^T\int_{\mathbb{R}^d} f(u(x, t), x, t, \sigma)\,\mu_t(dx)\,dt+\int_{\mathbb{R}^d}g(x, \sigma)\,\mu_T(dx).
		$$
        Using the inequality
        $$
        \sup_{t\in[0, T]}\int_{\mathbb{R}^d}W(x)\,\mu_t(dx)+\sup_{t\in [0, T]}\int_{\mathbb{R}^d}W(x)\,\sigma_t(dx)\le 2\beta(Re^{MT})Re^{MT}
        $$
		and applying the conditions (H3.1) and (H3.2), we obtain the estimate
		$$
		F_{\sigma}(u_0, \mu)\le TC_hh(|u_0|)+2(C_fT+C_g)\beta_{V, W}(Re^{MT})Re^{MT}.
		$$
		Thus it suffices to minimize the functional $(u, \mu)\mapsto F_{\sigma}(u, \mu)$
        only over the set of pairs $(u, \mu)$ such that $\mu_t\,dt$ is a solution to
        the Cauchy problem $\partial_t\mu_t=L^{*}_{\sigma, u(x, t)}\mu_t$, $\mu_0=\nu$, and
		$$
		F_{\sigma}(u, \mu)\le TC_hh(|u_0|)+2(C_fT+C_g)\beta_{V, W}(Re^{MT})Re^{MT}.
		$$
		Note that the right--hand side has the form $\alpha(Re^{MT})Re^{MT}$, where $\lim_{R\to\infty}\alpha(R)=0$.
		By Lemma~\ref{lem1} there exists a number $R_2>R_1$ such that for all $R>R_2$ and all $t\in[0, T]$ the inequalities
		$$
		\int_{\mathbb{R}^d} V\,d\mu_t\le Re^{Mt}, \quad
		\int_0^{T}\int_{\mathbb{R}^d}h(|u(x, t)|)\,d\mu_t\,dt\le\gamma R
		$$
		are fulfilled for every Borel mapping $u\colon\mathbb{R}^d\times[0, T]\to U$ and
		for every measure $\mu\in\mathcal{M}(V)$ satisfying the following conditions:
		{\rm 1)} the measure $\mu=\mu_t\,dt$ is a solution to the Cauchy problem $\partial_t\mu=L^{*}_{\sigma, u(x, t)}\mu$, $\mu_0=\nu$,
		{\rm 2)} the estimate $F_{\sigma}(u, \mu)\le \alpha(Re^{MT})Re^{MT}$ holds.
        Let $R>R_2$.
		Taking into account Remark~\ref{rem-int},
        we conclude that it suffices to minimize the functional $(u, \mu)\mapsto F_{\sigma}(u, \mu)$ only
		over the set of pairs $(u, \mu)$ such that $\mu_t\,dt$ is a solution to
        the Cauchy problem $\partial_t\mu_t=L^{*}_{\sigma, u(x, t)}\mu_t$, $\mu_0=\nu$, and
		$$
		\int V(x)\,d\mu_t\le Re^{Mt}, \quad \int_0^T\int h(|u(x, t)|)\mu_t(dx)\,dt\le \gamma R.
		$$
		
        \vspace*{0.1cm}

        {\bf II. (Relaxed control)}

        \vspace*{0.1cm}

        Assume that $u\colon\mathbb{R}^d\times[0, T]\to U$ is a Borel function and the measure $\mu=\mu_t\,dt$
		is given by a family of probability measures $(\mu_t)_{t\in[0, T]}$ such that
        the mapping $t\mapsto\mu_t$ is continuous and the above estimates are fulfilled.
		Using this pair $(u, \mu)$, one can define the measure
		$$
		\Pi(dudxdt)=\delta_{u(x, t)}(du)\mu_t(dx)\,dt
		$$
		on $U\times\mathbb{R}^d\times[0, T]$. If $\mu$ is a solution to the Cauchy problem
		$\partial_t\mu_t=L_{\sigma, u(x, t)}^{*}\mu_t$, $\mu_0=\nu$,
        then $\Pi$ belongs to the set $S_R(\sigma)$ (see the definition before Remark~\ref{r7}).
		
		Let us consider an arbitrary measure $\Pi\in S_R(\sigma)$.
		Let $\mu=\mu_t\,dt$ be a projection of $\Pi$ on~$(x, t)$.
        Denote by $\Pi_{x, t}(du)$ the conditional measures for $\Pi$.
		By Remark~\ref{r7} there exists a Borel function
		$(x, t)\mapsto u(x, t)$ such that the equality
		$$
		u(x, t)=\int_Uu\,\Pi_{x, t}(du)
		$$
        holds for $\mu$ -- almost all $(x, t)$. Set
		$$
		\mathcal{F}_{\sigma}(\Pi)=
		\int_{U\times\mathbb{R}^d\times[0, T]}f(u, x, t, \sigma)\Pi(dudxdt)+
		\int_{\mathbb{R}^d}g(x, \sigma)\,\mu_{T}(dx).
		$$
		Recall that the function $f$ is convex in $u$. Applying the Jensen's inequality,
		we obtain
		\begin{multline*}
			\int_{U\times\mathbb{R}^d\times[0, T]}f(u, x, t, \sigma)\Pi(dudxdt)=
			\int_0^T\int_{\mathbb{R}^d}\int_Uf(u, x, t, \sigma)\Pi_{x, t}(du)\mu_t(dx)\,dt\ge
			\\
			\ge\int_0^T\int_{\mathbb{R}^d}f(u(x, t), x, t, \sigma)\mu_t(dx)\,dt.
		\end{multline*}
		Hence we have the inequality
		$\mathcal{F}_{\sigma}(\Pi)\ge \mathcal{F}_{\sigma}(\widetilde{\Pi})$,
		where
		$$
		\widetilde{\Pi}(dudxdt)=\delta_{u(x, t)}(du)\mu_t(dx)\,dt.
		$$
		Using Jensen's inequality again, we obtain
		$$
		\int_{U\times\mathbb{R}^d\times[0, T]}h(|u|)\Pi(dudxdt)\ge
		\int_{U\times\mathbb{R}^d\times[0, T]}h(|u(x, t)|)\mu_t(dx)\,dt=
		\int_{U\times\mathbb{R}^d\times[0, T]}h(|u|)\widetilde{\Pi}(dudxdt).
		$$
		Since for every function $\psi\in C_0^{\infty}(\mathbb{R}^d)$ and every $t\in[0, T]$ we have
		\begin{multline*}
			\int_{U\times\mathbb{R}^d\times[0, t]}L_{\sigma, u}\psi(x)\Pi(dudxds)=
			\int_{\mathbb{R}^d\times[0, t]}L_{\sigma, u(x, s)}\psi(x)\mu_s(dx)\,ds=
			\\
			=\int_{U\times\mathbb{R}^d\times[0, t]}L_{\sigma, u}\psi(x)\widetilde{\Pi}(dudxds),
		\end{multline*}
		the measure $\widetilde{\Pi}$ belongs to the set $S_R(\sigma)$.
        Thus we can minimize the functional $\mathcal{F}_{\sigma}(\Pi)$ on $S_R(\sigma)$ instead of $F_{\sigma}(u, \mu)$.
		Let us take a number $R>R_2$ such that for $S_R(\sigma)$ the statements of Lemma~\ref{lem3} and Lemma~\ref{lem4} hold.

        Denote by $M_R(\sigma)$ the set of all minimizers of the functional $\mathcal{F}_{\sigma}(\Pi)$ over $S_R(\sigma)$.		
        By Lemma~\ref{lem5} the mapping $\Pi\mapsto \mathcal{F}_{\sigma}(\Pi)$ is lower semi-continuous. By Lemma~\ref{lem3} the
        set $S_R(\sigma)$ is compact. Thus $M_R(\sigma)$ is compact and contains at least one element.
        Moreover, the set $M_R(\sigma)$ is convex since the mapping $\Pi\mapsto \mathcal{F}_{\sigma}(\Pi)$ is linear.

        \vspace*{0.1cm}

        {\bf III. (Fixed point)}

        \vspace*{0.1cm}

        According to Lemma~\ref{lem3}, for every $\Pi\in S_R(\sigma)$ the projection of $\Pi$ on $(x, t)$ belongs to the set
        $\mathcal{M}_{R, M}^{\omega}(V)$, where $\omega=\{\omega_{\psi}\}$ does not depend on $\sigma$.
		To prove Theorem~\ref{th1} it suffices to find a measure
		$\mu\in\mathcal{M}_{R, M}^{\omega}(V)$ such that there exists a minimizer $\Pi$ of
		the functional $\mathcal{F}_{\mu}(\Pi)$ on the set $S_{R}(\mu)$ and the projection of $\Pi$ on $(x, t)$
        is equal to the measure $\mu$. Thus the measure $\mu$ is a fixed point of the mapping
		$\sigma\mapsto \Phi(\sigma)$, where $\Phi(\sigma)$ is the set of all projections of measures $\Pi\in M_R(\sigma)$ on $(x, t)$.
        Let $e_{x, t}(u, x, t)=(x, t)$. Denote by $\Pi\circ e_{x, t}^{-1}$ the projection of $\Pi$ on $(x, t)$.
        Note that the mapping $\Pi\mapsto \Pi\circ e_{x, t}^{-1}$ is linear and continuous with respect to the weak topology.
        It follows that the set $\Phi(\sigma)$ is nonempty, convex and compact in the weak topology.
        Recall that $\mathcal{M}_{R, M}^{\omega}(V)$ is convex (see Lemma~\ref{lem2}).

        The existence of a fixed point of $\Phi$ is based on the Kakutani--Ky Fan theorem
        (see, for instance, \cite{kakut}):  if a multivalued mapping $\Psi$
        from a convex compact set $\mathcal{C}$ in a locally convex space to the set of non-empty
        convex compact subsets of $\mathcal{C}$ has a closed graph, then there exists a point $p$ such
        that $p\in\Psi(p)$.

        Thus we need to verify that our mapping $\Phi$ has a closed graph.
        Assume that $\mu^n\in \Phi(\sigma^n)$, $\sigma^n\to\sigma$ and $\mu^n\to \mu$.
        Let $\mu^n=\Pi^n\circ e_{x, t}^{-1}$, where $\Pi^n\in M_R(\sigma^n)$. Since $\Pi^n\in P_R^{\omega}$
        and $P_R^{\omega}$ is a compact set (see the definition before Lemma~\ref{lem5}),
        there exists a subsequence $\Pi^{n_k}$ such that the measures $\Pi^{n_k}$
        converge weakly to a measure $\Pi\in P_R^{\omega}$. Note that $\mu=\Pi\circ e_{x, t}^{-1}$.
        In what follows we assume that the original sequence $\Pi^n$ converges weakly to $\Pi$.
        We need to prove that $\Pi\in M_R(\sigma)$.

        Applying Lemma~\ref{lem3}, we obtain that $\Pi\in S_{R}(\sigma)$.
        By Lemma~\ref{lem5} we have the inequality
		$$
		\lim\inf_{n\to\infty}\mathcal{F}_{\sigma_n}(\Pi^n)\ge \mathcal{F}_{\sigma}(\Pi).
		$$
		Let $\Gamma\in S_{R}(\sigma)$ and $\eta=\eta_t(dx)\,dt$ be a projection of $\Gamma$ on $(x, t)$.
		Denote by $\Gamma_{x, t}(du)$ the conditional measures for $\Gamma$.
        Let $(x, t)\mapsto v(x, t)$ be a Borel function such that the equality
		$$
		v(x, t)=\int_Uu\,\Gamma_{x, t}(du)
		$$
        holds for $\eta$ -- almost all $(x, t)$.
		Set
		$$
		\widetilde{\Gamma}(dudxdt)=\delta_{v(x, t)}(du)\eta_t(dx)\,dt.
		$$
		Arguing as in the part (II), we get $\widetilde{\Gamma}\in S_R(\sigma)$ and
		$\mathcal{F}_{\sigma}(\widetilde{\Gamma})\le \mathcal{F}_{\sigma}(\Gamma)$.
		
		By Lemma~\ref{lem4} there exists a sequence $\widetilde{\Gamma}^n\in S_{R}(\sigma^n)$ such that
		the measures $\widetilde{\Gamma}^n$ converge weakly to the measure $\widetilde{\Gamma}$
		and for every $n$ the projection of $\widetilde{\Gamma}^n$ on $(u, t)$
		is equal to the projection of $\widetilde{\Gamma}$ on $(u, t)$.
        By Lemma~\ref{lem5} we have the equality
		$$
		\lim_{n\to\infty}\mathcal{F}_{\sigma^n}(\widetilde{\Gamma}^n)=\mathcal{F}_{\sigma}(\widetilde{\Gamma}).
		$$
		Using the inequality
		$\mathcal{F}_{\sigma^n}(\Pi^n)\le \mathcal{F}_{\sigma_n}(\widetilde{\Gamma}^n)$,
		we obtain
		$\mathcal{F}_{\sigma}(\Pi)\le \mathcal{F}_{\sigma}(\widetilde{\Gamma})\le \mathcal{F}_{\sigma}(\Gamma)$.
		Thus $\Pi\in M_R(\sigma)$ and it follows that $\mu\in \Phi(\sigma)$. This completes the proof.
	\end{proof}
	
	Let us prove Corollary~\ref{cor1}.

	\begin{proof}
		Let us consider the function $u$ and the measure $\mu$ constructed in Theorem~\ref{th1}.
		The measure $\mu$ is a probability solution to the Cauchy problem $\partial_t\mu_t=L_{\mu, u(x, t)}^{*}\mu_t$, $\mu_0=\nu$.
		Moreover, we have $V\in L^1(\mu)$. According to the condition (H2.3), the coefficients of $L_{\mu, u(x, t)}$
        are integrable over $\mathbb{R}^d\times[0, T]$ with respect to the measure $\mu_t\,dt$.
		Applying the superposition principle (see, for instance \cite{Superp21}, \cite{Trev}) to the measure $\mu_t\,dt$,
        we obtain a probability measure $P$ on the space
		$C([0, T], \mathbb{R}^d)$ such that $\mu_t=P\circ e_t^{-1}$, where $e_t(\omega)=\omega(t)$, and for every function
		$\psi\in C_0^{\infty}(\mathbb{R}^d)$ the process
		$$
		\xi_t(\omega)=\psi(\omega(t))-\psi(\omega(0))-\int_0^tL_{\mu, u(s, \omega(s))}\psi(\omega(s), s)\,ds
		$$
		is a martingale with respect to the measure $P$ and the natural filtration $\mathcal{F}_t=\sigma(\omega(s), s\le t)$.
		By Proposition 2.1 from \cite[Chapter 4]{Ikeda-Watanabe} there exists a filtered probability
        space $(\Omega, \mathcal{F}_t, \mathcal{P})$ supporting
		a $\mathcal{F}_t$--Brownian motion $W$ and a $\mathcal{F}_t$--adapted process $X$
      	such that
		$$
		dX_t=\sqrt{2A(X_t, t, \mu)}dW_t+\bigl(b(X_t, t, \mu\bigr)+Q(X_t, t, \mu)u(X_t, t)\bigr)\,dt
		$$
		and $\mathcal{P}\circ X_t^{-1}=\mu_t$ for all $t\in[0, T]$. Thus we take the pair $(u, \mu)$ satisfying the assertions (i), (ii) and (iii) of Theorem~\ref{th1}, apply the superposition principle and obtain the processes~$X_t$ and $u(t, X_t)$ satisfying the assertions (i) and (ii) of Corollary~\ref{cor1}. Let us verify whether assertion~(iii) of Corollary~\ref{cor1} holds for $\mu_t$, $X_t$ and $u(t, X_t)$.

		Assume that $(\widetilde{\Omega}, \widetilde{\mathcal{F}}_t, \widetilde{\mathcal{P}})$ is another filtered probability space
        supporting a $\widetilde{\mathcal{F}}_t$--Brownian motion $\widetilde{W}$, a $\widetilde{\mathcal{F}}_t$--adapted process $Y$ and a $\widetilde{\mathcal{F}}_t$--adapted process $V$
      	such that $\nu=\widetilde{\mathcal{P}}\circ Y_0^{-1}$, $\mathbb{E}h(|V_t|)<\infty$ and
		$$
		dY_t=\sqrt{2A(Y_t, t, \mu)}d\widetilde{W}_t+\bigl(b(Y_t, t, \mu\bigr)+Q(Y_t, t, \mu)V_t\bigr)\,dt.
		$$
		Set $\sigma_t=\widetilde{\mathcal{P}}\circ Y_t^{-1}$. Denote by $\Pi_t(dvdy)$ the joint distribution of $(V_t, Y_t)$
        and $\Pi(dvdydt)=\Pi_t(dvdy)\,dt$.
        By the It$\hat{o}$ formula the equality
		\begin{multline*}
			\int_{\mathbb{R}^d}\psi(y)\,\sigma_t(dy)-\int_{\mathbb{R}^d}\psi(y)\,\nu(dy)=
			\\
			\int_0^t\int_{\mathbb{R}^d}L_{\mu, 0}\psi(y, s)\,\sigma_s(dy)\,ds+
			\int_{U\times\mathbb{R}^d\times [0, t]}\langle Q(y, s, \mu)v, \nabla\psi(y)\rangle\,\Pi(dvdyds).
		\end{multline*}
        holds for every $\psi\in C_0^{\infty}(\mathbb{R}^d)$ and all $t\in[0, T]$.
		Denote by $\Pi_{y, s}(dv)$ the conditional measures for $\Pi$ with respect to $\sigma_s\,ds$.
		There exists a Borel function $(y, s)\mapsto v(y, s)$ such that the equality
		$$
		v(y, s)=\int_{U}v\,\Pi_{y, s}(dv)
		$$
        holds for $\sigma_s\,ds$ -- almost all $(y, s)$.
		Since
		$$
		\int_{U\times\mathbb{R}^d\times[0, t]}\langle Q(y, s, \mu)v, \nabla\psi(y)\rangle\,\Pi(dvdyds)
		=\int_0^t\int_{\mathbb{R}^d}\langle Q(y, t, \mu)v(y, s), \nabla\psi(y)\rangle\sigma_s(dy)\,ds,
		$$
		the measure $\sigma_t\,dt$ is a probability solution to the Cauchy problem
		$\partial_t\sigma_t=L_{\mu, v(y, t)}^{*}\sigma_t$, $\sigma_0=\nu$.
		Note that
		$$
		\mathbb{E}\Bigl[\int_0^{T}f(V_t, Y_t, t, \mu)\,dt+g(Y_{T}, \mu)\Bigr]=
		\int_{U\times\mathbb{R}^d\times[0, T]}f(v, y, t, \mu)\Pi(dvdydt)\,dt+\int_{\mathbb{R}^d}g(y, \mu)\sigma_T(dy).
		$$
		Recall that $f$ is convex in $u$. Applying Jensen's inequality, we obtain
        $$
        \int_{U\times\mathbb{R}^d\times[0, T]}f(v, y, t, \mu)\,\Pi(dvdydt)\ge
		\int_0^T\int_{\mathbb{R}^d\times U}f(v(y, t), y, t, \mu)\sigma_t(dy)\,dt.
		$$
        According to the assertion (iii) of Theorem~\ref{th1}, the pair $(u, \mu)$ is optimal that is the following inequality
        \begin{multline*}
        \int_0^{T}\int_{\mathbb{R}^d}f(v(x, t), x, t, \mu)\,\sigma_t(dx)\,dt
			+\int_{\mathbb{R}^d}g(x, \mu)\sigma_{T}(dx)\ge			
\\
\int_0^{T}\int_{\mathbb{R}^d}f(u(x, t), x, t, \mu)\,\mu_t(dx)\,dt
			+\int_{\mathbb{R}^d}g(x, \mu)\mu_{T}(dx)
		\end{multline*}
		holds. In terms of the processes $X_t$, $u(X_t, t)$, $Y_t$ and $V_t$
        the last inequality can be rewritten in the following way
        $$
        \mathbb{E}\Bigl[\int_0^{T}f(V_t, Y_t, t, \mu)\,dt+g(Y_{T}, \mu)\Bigr]\ge
		\mathbb{E}\Bigl[\int_0^{T}f(u(X_t, t), X_t, t, \mu)\,dt+g(X_{T}, \mu)\Bigr].
		$$
        Thus the statement (iii) of Corollary~\ref{cor1} is proved.

	\end{proof}

\vspace*{0.2cm}

\centerline{\bf Acknowledgements.}

\vspace*{0.2cm}

The authors are grateful to Prof. Vladimir I. Bogachev, Prof. Yurii V. Averboukh and Tikhon~I.~Krasovitskii for fruitful discussions and valuable remarks. We thank to the anonymous referee for thorough
reading and useful corrections and suggestions.

This paper is supported by the Russian Science Foundation Grant N 25-11-00007 at the
Lomonosov Moscow State University.


\begin{thebibliography}{}

\bibitem{CardParetta20} Y. Achdou, P. Cardaliaguet, F. Delarue, A. Porretta, F. Santambrogio,
Mean Field Games, Lecture Notes in Mathematics, Springer Cham, 2019.

\bibitem{Anita1}
S.-L. Anita. Optimal control of stochastic differential equations via Fokker-Planck equations,
Appl Math Optim 84 (Suppl 2) (2021) 1555-1583. https://doi.org/10.1007/s00245-021-09804-5.

\bibitem{Anita2}
S.-L. Anita, L. Di Persio, Nonlocal Stochastic Optimal Control
for Diffusion Processes: Existence, Maximum Principle and Financial Applications,
(2025) arXiv:2503.05912

\bibitem{AnuBor}
M. Annunziato, A. Borzi, A Fokker-Planck control framework for stochastic systems.
EMS Surv. Math. Sci. 5 (1/2) (2018) 65-98. https://doi.org/10.4171/emss/27.

\bibitem{Averboukh first order1}
Yu. Averboukh, Viability analysis of the first-order mean field games, 2020.
ESAIM: Control Optim. Calc. Var. 26, 33. https://doi.org/10.1051/cocv/2019013.

\bibitem{Averboukh converg Levy}
Yu. Averboukh, Deterministic limit of mean field games associated with nonlinear Markov processes.
Appl Math Optim 81 (2020) 711-738. https://doi.org/10.1007/s00245-018-9486-9.

\bibitem{Barbu25}
V. Barbu, Mean Field System: The Optimal Control Based Approach,
SIAM J. Control Optim., 63 (4) (2025) 2809-2834. https://doi.org/10.1137/24M1633121.

\bibitem{BarbuRochner}
V. Barbu, M. R\"ockner, Nonlinear Fokker-Planck Flows and their Probabilistic Counterparts. Lecture Notes in Mathematics, Springer Cham, 2024.

\bibitem{LebRi}
C. Le Bris, P.L. Lions,  Existence and uniqueness of solutions to Fokker-Planck type equations
with irregular coefficients. Comm. Partial Differ. Equ. 33 (2008)  1272-1317. https://doi.org/10.1080/03605300801970952.

\bibitem{BensoussanFrehseYam}
A. Bensoussan, J. Frehse, P. Yam, Mean Field Games and Mean Field Type Control Theory,
vol. 101, Springer, New York, 2013.

\bibitem{BensoussanFYpaper}
A. Bensoussan, K.C.J. Sung, S.C.P. Yam, S.P. Yung, Linear-Quadratic Mean Field Games, J. Optim. Theory Appl.
169 (2016) 496-529. https://doi.org/10.1007/s10957-015-0819-4.

\bibitem{tm2Bogach}
V.I. Bogachev, Measure theory. V. 1, 2. Springer, Berlin, 2007.

\bibitem{Bogach-wk}
V.I. Bogachev, Weak Convergence of Measures. Amer. Math. Soc, Rhode Island, 2018.

\bibitem{BK}
V.I. Bogachev, A.V. Kolesnikov, The Monge-Kantorovich problem: achievements, connections, and perspectives,
Russian Math. Surveys, 67 (5) (2012) 785-890.

\bibitem{bookFPK}
V.I. Bogachev, N.V. Krylov, M. R\"ockner, S.V. Shaposhnikov, Fokker-Planck-Kolmogorov equations.
Amer. Math. Soc, Rhode Island, 2015.

\bibitem{Superp21}
V.I. Bogachev, M. R\"ockner, S.V. Shaposhnikov, On the Ambrosio-Figalli-Trevisan superposition principle for probability solutions to Fokker-Planck-Kolmogorov equations. J. Dyn. Diff. Equat. 33 (2021) 715-739. https://doi.org/10.1007/s10884-020-09828-5.

\bibitem{BogachShap}
V.I. Bogachev, S.V. Shaposhnikov, Nonlinear Fokker-Planck-Kolmogorov
equations, Russian Math. Surveys 79 (5) (2024) 751-805. https://doi.org/10.4213/rm10202e.

\bibitem{izvest25}
V.I. Bogachev, S.V. Shaposhnikov, On uniquely solvable Fokker-Planck-Kolmogorov equations.
Izv. Math, 89 (5) (2025) 900-919. https://doi.org/10.4213/im9639e.

\bibitem{kakut}
V.I. Bogachev, O.G. Smolyanov, Topological vector spaces and their applications, Springer Monogr. Math., Springer Cham, 2017.

\bibitem{Campi}
 L. Campi, M. Ghio, G. Livieri, N-Player games and mean-field games with smooth dependence on past absorptions. Ann. Inst. H. Poincare Probab. Statist. 57 (4) (2021) 1901-1939. https://doi.org/10.1214/20-AIHP1138.

\bibitem{Cannarsa}
P. Cannarsa, C. Mendico. Mild and weak solutions of mean field game problems for linear control systems. Minimax
Theory Appl., 5 (2) (2020) 221-250.

\bibitem{Cardconv}
P. Cardaliaguet, F. Delarue, J.-M. Lasry, P.-L. Lions, The Master Equation and the Convergence
Problem in Mean Field Games:(AMS-201), 201, Princeton University Press,
Princeton, 2019.

\bibitem{Cardaliaguet-Graber}
P. Cardaliaguet, P.J. Graber, Mean field games systems of first order. ESAIM: Control Optim.
Calc. Var. 21 (2015) 690-722. https://doi.org/10.1051/cocv/2014044.

\bibitem{CarmonaDelarue}
R. Carmona, F. Delarue, Probabilistic analysis of mean field games, SIAM J. Control Optim.
51 (2013) 2705-2734. https://doi.org/10.1137/120883499

\bibitem{CarmonaDel}
R. Carmona, F. Delarue, Probabilistic Theory of Mean Field Games with Applications I, II,
Probability Theory and Stochastic Modelling, Springer Cham, 2018.

\bibitem{Carmona-Lacker}
R. Carmona, D. Lacker, A probabilistic weak formulation of mean field games and applications.
Ann. Appl. Probab. 25 (2015) 1189-1231. https://doi.org/10.1214/14-AAP1020.

\bibitem{Carrillo20}
J.A. Carrillo, E.A. Pimentel, V.K. Voskanyan,
On a mean field optimal control problem, Nonlinear Anal., 2020. 199, 112039. https://doi.org/10.1016/j.na.2020.112039.

\bibitem{Chen}
Chen, M.F., Li, S.F.: Coupling methods for multidimensional diffusion processes.
Ann. Probab. 17 (1) (1989) 151-177. https://doi.org/10.1214/aop/1176991501.

\bibitem{Chowdhury-Espen-Krupski}
I. Chowdhury, J. R. Espen, M. Krupski,
On fully nonlinear parabolic mean field games with nonlocal and local diffusions.
SIAM J. Math. Anal. 56 (5) (2024) 6302-6336. https://doi.org/10.1137/23M1615528.

\bibitem{Filippov}
A.F. Filippov, On certain questions in the theory of optimal control, J. Soc. Ind. Appl. Math., Ser. A: Control 1 (1)(1962) 76-84. https://doi.org/10.1137/0301006.

\bibitem{Fischer converg}
M. Fischer, On the connection between symmetric n-player games and mean field games.
Ann. Appl. Probab. 27 (2017) 757-810. https://doi.org/10.1214/16-AAP1215.

\bibitem{Frank}
T.D. Frank, Nonlinear Fokker-Planck equations. Fundamentals and applications, Springer Ser. Synergetics, Springer-Verlag, Berlin, 2005.

\bibitem{Fu-Horst}
G. Fu, U. Horst. Mean field games with singular controls.
SIAM J. Control Optim. 55 (6) (2017) 3833-3868. https://doi.org/10.1137/17M1123742.

\bibitem{Gomes}
D.A. Gomes, E.A. Pimentel, V. Voskanyan, Regularity Theory for Mean-Field Game Systems,
Springer Briefs in Mathematics, Springer Cham, 2016.

\bibitem{HMC06}
M. Huang, R. Malham$\acute{e}$, P. Caines, Large population stochastic dynamic games: closed-loop McKean-Vlasov
systems and the Nash certainty equivalence principle, Commun. Inf. Syst. 6 (3) (2006) 221-252.

\bibitem{Ikeda-Watanabe}
N. Ikeda, S. Watanabe. Stochastic differential equations and diffusion processes. Vol. 24. Elsevier, 2014.

\bibitem{kolokoltsov-nonl}
V.N. Kolokoltsov, Differential Equations on Measures and Functional Spaces, Birkhauser Cham, 2019.

\bibitem{Kolokoltsov}
V.N.\,\!Kolokoltsov, J.\,\!Li, W.\,\!Yang, \!Mean \!field \!games and nonlinear Markov processes, (2012)
arXiv:1112.3744.

\bibitem{SkorcDr}
A. Kucia, Scorza Dragoni type theorems, Fund. Math.  138 (1991) 197-203. https://doi.org/10.4064/fm-138-3-197-203.

\bibitem{Lacker}
D. Lacker, Mean field games via controlled martingale problems: Existence of Markovian equilibria,
Stoch. Process. and their Appl., 125 (7) (2015) 2856-2894. https://doi.org/10.1016/j.spa.2015.02.006.

\bibitem{Lacker converg}
D. Lacker, On the convergence of closed-loop Nash equilibria to the mean field game limit.
Ann. Appl. Probab. 30 (2020) 1693-1761. https://doi.org/10.1214/19-AAP1541.

\bibitem{Lasry-Lions}
J.M. Lasry, P.L. Lions, Mean field games, Jpn. J. Math. 2 (2007) 229-260. https://doi.org/10.1007/s11537-007-0657-8.

\bibitem{Paretta17}
A. Porretta, On the weak theory for mean field games systems. Boll. Unione Mat. Ital. 10
(2017) 411-439. https://doi.org/10.1007/s40574-016-0105-x.

\bibitem{Trev}
D. Trevisan, Well-posedness of multidimensional diffusion processes with weakly differentiable coefficients,
Electron. J. Probab. 21 (2016) 1-41. https://doi.org/10.1214/16-EJP4453.

\bibitem{RZ}
M. R\"ockner, X. Zhang, Weak uniqueness of Fokker-Planck equations with degenerate and
bounded coefficients. C. R. Math. Acad. Sci. Paris, 348 (7-8) (2010) 435-438. https://doi.org/10.1016/j.crma.2010.01.001.

\bibitem{Strook}
D.W. Stroock, S.R.S. Varadhan,  Multidimensional Diffusion Processes. Springer-Verlag,
Berlin-New York, 1979.

\bibitem{Cardaliaguet18}
P. Cardaliaguet, C. A. Lehalle, Mean field game of controls and an application to trade crowding, Mathematics and Financial Economics, 12(3), (2018) 335-363. https://doi.org/10.1007/s11579-017-0206-z

\bibitem{Carmona20}
R. Carmona, Applications of mean field games in financial engineering and economic theory, arXiv preprint arXiv:2012.05237. https://doi.org/10.48550/arXiv.2012.05237

\bibitem{Veret20}
Y. Mishura, A. Veretennikov, Existence and uniqueness theorems for solutions of McKean–Vlasov stochastic equations. Theory of Probability and Mathematical Statistics, 103, (2020) 59-101.

\bibitem{Huang}
X. Huang, P. Ren, F. Y. Wang, Distribution dependent stochastic differential equations, Frontiers of Mathematics in China,
16(2), (2021) 257-301. https://doi.org/10.1007/s11464-021-0920-y

\end{thebibliography}
\end{document}